\newcommand{\Vect}{\textnormal{Vect}}
\newcommand{\id}{\textnormal{id}}
\newcommand{\norm}[1]{\left|\left|#1 \right|\right|}
\newcommand{\scal}[1]{\langle #1 \rangle}
\newcommand{\floor}[1]{\lfloor #1 \rfloor}
\newcommand\restr[2]{{\left.\kern-\nulldelimiterspace 
  #1 
  \vphantom{\big|} 
  \right|_{#2} 
  }}
\def\ga{\alpha}\def\gb{\beta}
\def\cH{\mathcal{H}}
\def\cL{\mathcal{L}}
\def\bA{\mathbb{A}}
\def\bN{\mathbb{N}}
\def\bR{\mathbb{R}}
\def\abs#1{\vert #1 \vert} 
\theoremstyle{definition}
\newtheorem{definition}{Definition}[section]
\newtheorem{example}[definition]{Example}
\theoremstyle{plain}
\newtheorem{theorem}[definition]{Theorem}
\newtheorem{proposition}[definition]{Proposition}
\newtheorem{lemma}[definition]{Lemma}
\newtheorem{corollary}[definition]{Corollary}
\theoremstyle{remark}
\newtheorem{remark}[definition]{Remark}
\def\abs#1{\vert #1 \vert}
\def\norm#1{\Vert #1 \Vert}
\def\1{\mathbf{1}}
\def\X{\mathbf{X}}
\def\Z{\mathbf{Z}}
\def\Y{\mathbf{Y}}
\def\R{\mathbb{R}}
\def\W{\mathbf{W}}
\def\qshuffle{\,\widehat{\shuffle}\,}
\newcommand\proj{\mathsf{proj}}
\newcommand\srplus{\boxplus}
\newcommand\srscalar{\boxdot}
\title{Smooth rough paths, their geometry and algebraic renormalization}
\author{Carlo Bellingeri (TU Berlin), Peter K. Friz (TU and WIAS Berlin), \\Sylvie Paycha (Potsdam University), Rosa Preiß (Potsdam University)}
\date{}
\begin{document}
\maketitle

\begin{abstract}
We introduce the class of ``smooth rough paths" and study their main properties.  Working in a smooth setting allows us to discard sewing arguments and focus on algebraic and geometric aspects. Specifically, a Maurer–Cartan perspective is the key to a purely algebraic form of  Lyons extension theorem, the renormalization of rough paths in the spirit of [Bruned, Chevyrev, Friz, Preiß, A rough path perspective on renormalization, J. Funct. Anal. 277(11), 2019] as well as a related notion of ``sum of rough paths". We first develop our ideas in a geometric rough path setting, as this best resonates with recent works on signature varieties, as well  the renormalization of geometric rough paths. We then explore extensions to the quasi-geometric and the more general Hopf algebraic setting. 
\end{abstract}

\tableofcontents

\section{Introduction}

Recent years led to a remarkable convergence of different streams of mathematics. At the center of it is the 
notion of  path $X: [0, T]\to {\mathbb R}^d$ and the ``stack" of its iterated integrals

$$
\X_T :=\mathrm{Sig}(X |_{[0,T]}) := \sum_{n \ge 0}\underset{0 \le u_{1}\leq \ldots
\leq u_{n} \le T }{\idotsint }dX _{u_{1}}\otimes \ldots \otimes
dX _{u_{n}},
$$
commonly called the  signature of $X$ over $[0,T]$, which 
takes values in $T((\mathbb{R}^d))$, the space of tensor series over $\R^d$. It is easy to see that the signature of a path segment actually takes its values in a very special curved subspace of the tensor (series) algebra, $G(\mathbb{R}^d) \subset T((\mathbb{R}^d))$, with a natural group structure.  This construction, which originates in Chen's fundamental work \cite{Chen54} is central to the theory of rough paths and stochastic analysis \cite{lyons1998, lyons2007differential, frizbook}.
Specifically, the path $t \mapsto \X_t :=\mathrm{Sig}(X |_{[0,t]})$ is the canonical rough path lift of $X$, for any sufficiently smooth path $X$ to make the signature well-defined. These ideas have proven useful in a remarkable variety of fields, stretching from machine learning  \cite{chevyrev2016primer} 
to algebraic geometry and renormalization theory. More specifically, we mention:

{\bf Signature varieties.} In  \cite{Amendola2019}, Am\'endola, Sturmfels and one of us study the geometry of signatures tensors. The signatures of a given class of smooth paths parametrize an algebraic variety inside the space of tensors, derived from the free nilpotent Lie group, with surprising analogies with the Veronese variety from algebraic geometry. These signature varieties provide both new tools to investigate paths and new challenging questions about their behaviour. In   \cite{Amendola2019} piecewise linear paths and polynomial paths are investigated. In a later work by Galuppi \cite{galuppi2019rough}, and in the terminology of this paper, the role of classical smooth paths have been replaced by certain {\bf smooth rough paths}, see Definition 2.1. Related recent works on signature varieties include \cite{colmenarejo2020toric,pfeffer2019learning}.

{\bf Rough paths and renormalization.} Rough paths were famously used to solve the singular KPZ stochastic partial differential equation \cite{hairer2013}
and subsequently led to the theory of  regularity structures for general singular SPDEs
\cite{Hairer2014}, with precise correspondences to rough paths highlighted e.g. in \cite[Sec. 13.2.2]{Friz2020course}.  The central topic of {\it renormalization} of singular SPDEs \cite{bruned2019algebraic}  was revisited from a rough path perspective in \cite{Bruned2019}, which notably introduced pre-Lie algebras, and further  inspired progress in the field \cite{bruned2020renormalising},  see also Otto et al. \cite{linares2021structure}.

We cannot possibly expose in full the subtle intertwining of probabilistic, analytic,  geometric and algebraic techniques of the above works, 
but still, sketch the general idea in a simple setting. If $X$ models a realization of noise, such as Brownian sample paths, then the very notion of Stieltjes integration against $dX_u$ is ill-defined (with probability one, such paths are not of locally bounded variation). The stochastic analysis provides probabilistic solutions: Stratonovich integration amounts to work with mollified $X$, followed by taken limits in probability, whereas It\^o integration respects the martingale structure of such processes. Rough path theory (later: regularity structures) understands that these different calculi can be hard-coded  imposing the first $N$-iterated integrals of $X$. The resulting object, an enhancement of $X$, is called a rough path (resp. model in the context of regularity structures). 

There are situations - notably the KPZ equation - when the Stratonovich solution diverges, whereas the correct and desired object is given by the It\^o solution. From a rough paths perspective, this amounts to adjusting (``renormalize'') the higher levels of the aforementioned enhancement. Doing so in an algebraically consistent way is a highly non-trivial task, and was achieved in a rough path resp. regularity structure set in the afore-mentioned works. In particular, \cite{bruned2019algebraic}  develops the algebraic theory of renormalization exclusively for  {\em smooth models}, to which our study of {\em smooth rough paths} is aligned.

\medskip
In this paper, we look at the indefinite signature  path $t \mapsto \X_t :=\mathrm{Sig}(X |_{[0,t]})$, also known as canonical rough path lift, of a $\bR^d$-valued smooth path $X$, as the solution of the linear differential equation
\begin{equation} \label{eq:MCex}
\dot{\X}_t = \X_t \otimes \dot{X}_t, \qquad \X_0 = \1 \in \mathcal{G}, 
    \end{equation}
with, as is well-known (see e.g. \cite[Sec. 2.1.1]{lyons1998} or \cite[Ch. 7]{frizbook}) $\mathcal{G} := G( \bR^d ) = \exp_{\otimes}(\mathfrak{g}) \subset T((\mathbb{R}^d))$, where  $\mathfrak{g} := \cL (( \bR^d)) = \bR^d \oplus [\bR^d,\bR^d] \oplus ... $ denotes the space of Lie series. The same construction in the quotient (or level-$N$ truncated) algebra 
$$
(T^N(\mathbb{R}^d),\otimes_N, \1)
$$
yields a finite-dimensional Lie group $\mathcal{G}^N := G^N( \bR^d )= \exp_{\otimes_N}(\mathfrak{g}^N)
\subset T^N ( \R^d)$ with Lie algebra $\mathfrak{g}^N := \cL^N ( \bR^d)$, the Lie polynomials of degree less equal $N$.
This yields an increasing family of Lie algebras (resp. groups) with inclusion map $i$ (resp. $j$). 

 \begin{center} 
\begin{tikzcd}
\cdots \arrow[r, "j"] 
& \mathcal{G}^N 
\arrow[r, "j"] \arrow[d]
& \mathcal{G}^{N+1}
\arrow[r, "j"] \arrow[d, ] & \cdots \arrow[r, "j"] 
& \mathcal{G} \arrow[d]
\\
\cdots \arrow[r, "i"] 
& \mathfrak{g}^N 
\arrow[r, "i"] 
& \mathfrak{g}^{N+1} \arrow[r, "i"] &
\cdots \arrow[r, "i"] & \mathfrak{g}
\end{tikzcd}
\end{center}
Cartan's classical development of a smooth $\mathfrak{g}^N$-valued path $\mathfrak{y}$ is precisely given by solving the differential equation
$$
\dot{\X}_t = \X_t \otimes_N \mathfrak{y} (t), \qquad \X_0 = \1 \in \mathcal{G}^N. 
$$
Basic results on linear differential equations in finite dimensions guarantee a unique and global solution. 
This gives a well-defined ``projective'' way to solve for $\dot{\X}_t = \X_t \otimes \mathfrak{y} (t), \X_0 = 1 \in \mathcal{G}$, 
for any $\mathfrak{g}$-valued path $\mathfrak{y}$. This is a precise generalization of \eqref{eq:MCex} and leads us to the class of {\em smooth rough paths}\footnote{(Warning.) We insist again that smooth rough paths are much richer objects than {\em canonical lifts of smooth paths}, unfortunately also called smooth rough paths in the earlier stages of the theory.
To wit, the {\em pure area rough path}, familiar in rough path theory and seen later on in the text, is a perfect example of a smooth rough path.}. In infinite-dimensional Lie group theory, solvability of such equations has led to the notion of {\em regular Lie group}  \cite{Milnor}; although this is of no concern to us and we refer to  \cite{Bogfjellmo2016,Bogfjellmo2018}
 for the subtleties of infinite groups like $\mathcal{G}$.

Conversely, every smooth $\mathcal{G}^N$-valued path $\X_t$ is the Cartan development of  
$$
\mathfrak{y} (t) :=  \langle\omega_{\X_t}, \dot{\X}_t \rangle = \X_t^{-1} \otimes \dot{\X}_t = \partial_h |_{h=0} {\X_{t,t+h}} =: \dot{\X}_{t,t} \in \mathfrak{g}^{N}.
$$
Here $\omega_{\mathbf{x}}$ is the $\mathfrak{g}^N$-valued (left invariant) {\it Maurer--Cartan}  form, given at $\mathbf{x} \in G^N(\mathbb{R}^d)$, 
by
\begin{equation}\label{eq:MaurerCartan}
    \omega_\mathbf{x} := \mathbf{x}^{-1} \otimes d\mathbf{x}.
\end{equation} 
which can be viewed as left logarithmic derivative of the identity map of  $G^N(\mathbb{R}^d)$. The reason we encounter the left invariant Maurer--Cartan form, rather than its right invariant counterpart ($(d\mathbf{x}) \otimes \mathbf{x}^{-1}$) can be traced back to the order of interated integrals in the definition of the signatures, i.e. $u_1 \le \dots \le u_n$ rather than $u_n \le \dots \le u_1$.  We recall also that the Maurer-Cartan  form has appeared in previous works of rough paths on manifolds \cite{cass2015constrained}, as well as signature based shape analysis \cite{celledoni2019signatures}.  We shall see in \S \ref{sec:alg_ren_geo} and \S \ref{sec:alg_ren_qgeo}, that renormalization of rough differential equations, in the spirit of \cite{Bruned2019}, can much benefit from this geometric view point. (For earlier use in the context of renormalization see also \cite{CQRV}) In the context of renormalization of rough differential equations however, its use appears to be new.

The geometry of $\mathcal{G}$ encodes validity of a chain rule, equivalently expressed in terms of shuffle identities, that in turn exhibits $\mathcal{G}$ as a character group of the shuffle Hopf algebra. This suggests correctly that the Maurer-Cartan perspective is not restrictive to $G( \bR^d )$-valued (``geometric'') rough paths, but valid for ``general'' rough paths, in the sense of \cite{Tapia2018}, with values in the character groups of a general graded Hopf algebra.
However, too much generality does not allow for some of the concrete applications we have in mind, notably an understanding of differential equations driven by rough paths and their renormalization theory. We thus commence in Chapter 2 with smooth instances of {\em geometric rough paths} (in short: grp), which can be thought of as a multidimensional path enhanced with iterated integrals with classical integration by parts (shuffle) relations, together with suitable analytic conditions.  A typical (non-smooth) example is given by Brownian motion with iterated integrals in the Stratonovich sense,  see e.g.\cite[Sec. 2.2] {Friz2020course} for precise definitions, see also 
\cite{lyons1998,lyons2007differential,frizbook, hairer2015geometric,Friz2020course}. (We will review what we need in the main text below, cf. Definitions \ref{def:smooth-geom} and  \ref{defn_model_smooth}.) Another aspect concerns the {\em sub-Riemannian} structure of $G^N( \bR^d )$, the state space of (level-$N$) geometric rough paths, see e.g. in \cite[Remark 7.43]{frizbook} or \cite{friz2016geometric}.

The indefinite signature of a $\bR^d$-valued path always stays tangent to the left-invariant vector fields generated by the $d$ coordinate vector fields. In sub-Riemannian geometry, such paths are called {\em horizontal}. The study of smooth geometric rough paths is effectively the study of (possibly non-horizontal) smooth paths on $\mathcal{G}$. It is classical in geometric rough path theory to equip this space with the Carnot--Caratheodory metric. A generic smooth geometric rough path then has infinite length and are thus a genuine and interesting example of rough paths. We then continue to extend smooth rough paths in a more general setting:

{\bf Quasi-geometric theory:}
A {\em quasi-geometric rough path} should then be thought of as a multidimensional path enhanced with iterated integrals with classical integration by parts (shuffle) relation replaced by a
generalized integration by parts rule known as quasi-shuffle. A typical example is given by Brownian motion with iterated integrals in the It\^o sense. This structure also arises naturally when dealing with discrete sums (cf. \cite{Tapia20} for signature sums) or piecewise constant paths, as well as L\'evy processes \cite{curry14}, general semimartingales and finally rough path analysis \cite{bruned2020renormalising,Bel2020a}. We also note unpublished presentations on quasi-rough paths by D. Kelly, who first introduced the concept, following his work \cite{kelly2012ito,hairer2015geometric}. The very influential article \cite{hairer2015geometric} focused on the interplay between geometric and {\it branched rough paths}, introduced in \cite{gub10}, not central to this work. We note that quasi-shuffle structures have emerged independently in renormalization theory, see e.g. \cite{Kreimer,MP,manchon2010nested,Clavier2020} and the references therein.

{\bf Hopf algebra constructions:} We finally revisit the previous constructions from a general Hopf algebra construction. In view of \cite{Bruned2019} we do not single out the case of branched rough paths. Our graded approach to treat first geometric (Chapter 2), then quasi-geometric (Chapter 3) and finally the Hopf algebra case (Chapter 4) is a choice we made for essentially two reasons: (i) The material of Chapter 2 remains accessible to readers with a minimum on prerequisites and is also the setting that is most used in the context of signatures, including the recent developments in algebraic-geometric. (ii) Not every results obtained in the (quasi)geometric setting has a precise counter-part in the Hopf algebraic generality. For instance, as already observed in \cite{Bruned2019} in the context of branched rough paths, one loses certain uniqueness properties of renormalization operators when passing to more general structures.

\medskip

Let us list the main contributions of this work with some detailed pointers to the main text. 

\begin{itemize} 
\item  With Definitions 2.1, also 3.8, 4.1 we introduce the class of smooth rough paths in their respective setting and show in Theorems 2.8, also 3.10, 4.2 that level-$N$ smooth rough paths have a lift whose uniqueness hinges on some algebraic/geometric minimality. (This is in contrast to the classical Lyons lift of level-$N$ rough paths, where uniqueness depends on analytical conditions, see e.g. \cite[Ch. 9]{frizbook}.) We note that a related minimality condition appeared in the context of rough paths with jumps \cite{friz2017general}. 
\item An interesting insight is then that smooth rough paths, the resulting space of which is by nature non-linear, can be given a canonical linear structure. (This should be contrasted with adhoc linearizations based on Lyons--Victoir extension, see e.g. \cite[Ex. 2.4]{Friz2020course} and especially \cite{Tapia2018}.)
\item We finally revisit differential equations driven by our classes of rough paths, followed by their renormalization as initiated in \cite{Bruned2019}. Specifically, in Theorem 2.26 we highlight the role of smooth rough paths in the argument. Our subsequent extensions in Section 3 and 4 complements (and differ from) existing results, notably \cite{Bruned2019}, with a sole focus on geometric and branched structures, and related works \cite{bruned2020renormalising, Bruned2020} that involve/pass through branched constructions.
\end{itemize}

\bigskip
{\bf Acknowledgment.} 
CB, PKF and SP were supported in part by DFG Research Unit FOR2402. PKF and RP were supported in part by the European Research Council (ERC) under the European Union’s Horizon 2020 research and innovation program (grant agreement No. 683164). RP would like to thank Terry Lyons for a discussion that led to fundamental ideas for Section \ref{sec:srplus_geom}, Corollaries \ref{cor_time_transl}, \ref{cor:srplus_quasi}, \ref{cor:srplus_quasi_trans} and Section \ref{sec:srplus}.
\bigskip
\bigskip

\section{Smooth geometric rough paths}\label{sec:sgrp}

\subsection{Definitions and fundamental properties}\label{subsec:defns}

Let $(T((\bR^d)), +, \otimes)$ denote the algebra of tensor series over $\bR^d$, equipped with the standard basis $ e_1,..,e_d$. Elements of $T((\bR^d))$ are of the form
$$
\mathbf{x} = \sum \mathbf{x}^w e_w\,, 
$$
with summation over all words $w=\ell_1 \cdots\ell_n$ with letters $\ell_j \in \{1,...,d \}$, scalars $\mathbf{x}^w$ and $e_w := e_{l_1} \otimes \dots \otimes e_{l_n}$. The summation includes also $\mathbf{1}$, the empty word. There is a natural pairing of $T((\bR^d))$ with $T(\bR^d)$, the space of tensor polynomial linearly spanned by the $e_w$, so that
\begin{equation}\label{pairing}
\langle \mathbf{x}, w \rangle := \langle \mathbf{x}, e_w \rangle = \mathbf{x}^w,
\end{equation}
and the same pairing applies to the truncated spaces $T^N(\bR^d)$, consisting  of tensor polynomials of degree at most $N\in \mathbb{N}$, spanned by pure tensors $e_w$ whose word $w$ has length $|w|\le N$. We denote the the canonical projection onto $T^N(\bR^d)$ as $\proj_N\colon T((\bR^d))\to T^N(\bR^d)$. Equivalently, we can introduce $T^N(\bR^d)$ as a quotient algebra. Indeed, introducing the ideal $T^{>N}((\bR^d)) := \bigoplus^{\infty }_{n> N}(\bR^d)^{\otimes n}$ one immediately sees 
$$
   T^N(\bR^d) \cong T\left( \left( \bR^d\right) \right)  / T^{>N}((\bR^d)) .
$$
We write $\otimes_N$ for the induced ``truncated tensor product''.

Using the identification between words and tensors, we denote by $\shuffle$ the shuffle product on words $\shuffle\colon T(\bR^d)\times T(\bR^d)\to T(\bR^d)$, defined by $w\shuffle \mathbf{1}= \mathbf{1}\shuffle w = w$  for any word $w$ and the recursive definition
\begin{equation}\label{shuffle_recursive}
w \,i \shuffle v\,j= (w \shuffle v\,j)\, i + (v\,i \shuffle w)\, j \,,
\end{equation}
for any couple of words $w,v$ and letters $i,j\in \{1,...,d \}$. The product $\shuffle$ induces a commutative algebra\footnote{In fact, a Hopf algebra on $T(\bR^d)$ together with the deconcatenation coproduct, see e.g.  \cite{reutenauer1993free}, though this will not play a role in this section.} on $T \left(\bR^d\right)$.

There are several Lie algebras we want to look at which stem from the algebra $(T\left( \left(\bR^d\right) \right),\otimes) $; we use the notation $\mathcal{L}\left( \bR^d\right) $ for
the Lie algebra generated by the letters $\bR^d$ (the space of Lie polynomials), $\mathcal{L}^{N}\left( \bR^d\right) $ for the projection of {$\mathcal{L}\left( \bR^d\right) $} into $T^N (\bR^d)$, 
and $\mathcal{L}\left( \left( \bR^d\right)
\right) \subset T((\bR^d))$ 
the Lie series. We define the tensor and truncated tensor exponential 
$\exp_{\otimes}\colon T^{>0}((\mathbb{R}^d)) \to T((\mathbb{R}^d))$,  $\exp_{\otimes_N}\colon T^N(\mathbb{R}^d)\cap T^{>0}((\mathbb{R}^d))\to T^N(\mathbb{R}^d)$ given respectively by  
\begin{equation}\label{exp_tensor}
\exp_{\otimes}\mathbf{x}= \sum_{n \ge  0}\frac{\mathbf{x}^{\otimes n}}{n!}\,, \quad \exp_{\otimes_N}\mathbf{x}= \sum_{n=0}^N\frac{\mathbf{x}^{\otimes_N n}}{n!}.
\end{equation}
Then it is well known (\cite{reutenauer1993free,lyons2007differential}) that $G(\bR^d)= \exp_\otimes \cL (( \bR^d ))$ is a group with  operation $\otimes$ which satisfies 
\begin{equation} \label{equ:recallG}
G(\bR^d) = \{ \mathbf{x} \in T((\bR^d)): \langle\mathbf{x},\mathbf{1}\rangle=1,\,\,\langle \mathbf{x},v \shuffle w\rangle = \langle \mathbf{x},v\rangle\langle\mathbf{x},w\rangle
\text{ for all words $w,v$} \}\,,
\end{equation}
Similar results hold  for $G^N(\mathbb{R}^d)$, now in terms of words with joint length $\le N$ and as the image of $\mathcal{L}^{N}\left( \bR^d\right)$ under $\exp_{\otimes_N}$ respectively. We note that $(G^N(\mathbb{R}^d), \otimes_N)$ is a bona fide finite-dimensional Lie group with Lie algebra $\mathcal{L}^{N}\left( \bR^d\right)$.
\medskip

The following definitions are standard (as e.g. found in \cite{hairer2015geometric}), but with analytic H\"older / variation type conditions replaced by a smoothness assumptions.

\begin{definition}\label{def:smooth-geom}
We call level-$N$ {\bf smooth geometric rough path} (in short: $N$-{\bf sgrp}) over $\bR^d$ any non-zero path { $\mathbf{X}: [0,T] \to T^N(\bR^d) $ } such that 
\begin{itemize}
    \item[(a.i)] The shuffle relation holds for all times $t \in [0,T]$  
\begin{equation}\label{eq:shufflerel}
\langle\X_{t},v \shuffle w\rangle = \langle \X_{t},v\rangle\langle\X_{t},w\rangle
\end{equation} for all words with joint length $|v|+|w| \le N$. 
\item[(a.ii)] For every word of length $|w| \le N$, the map $t \mapsto\langle\X_{t},w\rangle$
is smooth. We write 
\[\dot{\X}_t = \sum_{|w|\le N} \langle\dot{\X}_{t},w\rangle e_w\] for the derivative of $\X$.
\end{itemize}
By {\bf smooth geometric rough path} (in short: {\bf sgrp})  we mean a path with values in  $T((\bR^d))$ with all defining ``$\le N$'' restrictions on the word's length omitted. 
\end{definition}
\begin{remark} 
By Proposition 2.14 below $N$-sgrp are smooth $G^N(\mathbb{R}^d)$-valued paths, thus (with respect to the appropriate Carnot-Caratheodory metric \cite{frizbook}) genuine $1/N$-H\"older regular rough paths, which justifies our terminology. Similarly, sgrp's are nothing but smooth $G(\mathbb{R}^d)$-valued paths, provided $G(\mathbb{R}^d)$ is equipped with a suitable ``weak'' differential structure to make it a topological Lie group, see \cite{Bogfjellmo2018}. 

\end{remark}
The shuffle relation applied to empty words and the demand that $\X$ is non-zero imply together with continuity in $t$ that $\langle\X_t,\mathbf{1}\rangle=1$ for all $t\in[0,T]$, hence $\X$ is (similar to formal power series) invertible with respect to $\otimes_{N}$. Every $N$-sgrp gives then rise to increments, $(s,t) \mapsto \X_s^{-1} \otimes_{N} \X_t$. This motivates the following definition.
\begin{definition}\label{defn_model_smooth}
We call level-$N$ {\bf smooth geometric rough model} (in short: $N$-{\bf sgrm}) over $\bR^d$ any non-zero map { $\X: [0,T]^2 \to T^N(\bR^d)$}    such that
\begin{itemize}
\item[(b.i)] the  shuffle relation \eqref{eq:shufflerel} holds with $\X_t$ replaced by $\X_{s,t}$, any $s,t$. 
\item[(b.ii)]  Chen's relation holds, by which we mean
\begin{equation}\label{eq:Chensrel}
\X_{su} \otimes_N \X_{ut}=\X_{s,t}\,
\end{equation} 
for any $s,u,t\in[0,T]$. 
\item[(b.iii)] For every word of length $|w| \le N$, the map $\mapsto \langle\X_{s,t},w\rangle$
is smooth, for one (equivalently: all) base point(s) $s \in [0,T]$.
\end{itemize}
By {\bf smooth geometric rough model} (in short {\bf sgrm})  we mean a map with values in $T((\bR^d))$, with all ``$\le N$'' quantifiers omitted and equation \eqref{eq:Chensrel} with $\otimes_N$ replaced by $\otimes$. 
\end{definition}
\begin{remark}
The terminology ``model'' in  is consistent with Hairer's regularity structures. More specifically, given a $N$-sgrm $\X = \X_{s,t}$ we
have the map 
$$
    s \mapsto \{ T^N(\R^d) \ni \mathbf{u} \mapsto \langle \X_{s,\cdot}, \mathbf{u} \rangle \}
$$
which together with Chen's relation yields precisely a model in the sense of regularity structures. See e.g. \cite[Sec. 13.2.2]{Friz2020course}, \cite[Thm. 5.15]{Preiss16} and \cite[Proposition~48]{Bruned2019}.
Our use of the adjective ``smooth'' is also consistent with the notion of \textbf{smooth model}, used by Hairer and coworkers and central to the algebraic renormalization theory of \cite{bruned2019algebraic}. 
\end{remark}

It follows from \eqref{eq:shufflerel} resp.\ (b.i) and the non-zero demand that $N$-smooth geometric rough paths and models really take values in $G^N(\R^d)$; similarly for (tensor series) smooth rough paths and models and $G(\R^d)$. Clearly, every level-$N$ smooth geometric rough path $\mathbf{X}:[0,T]\to T^N(\bR^d)$ induces a level-$N$ smooth geometric rough model $\mathbf{X}: {[0,T]^2} \to T^N(\bR^d) $ in the above sense, by considering the increments 
\begin{equation}\label{eq:map_to_path}
    \X_{s,t} := \X_{s}^{-1} \otimes_N \X_t.
\end{equation}
Conversely, every level-$N$ smooth geometric rough model defines a level-$N$ smooth geometric rough path $\X_t := \X_{0,t}$ so that $N$-sgrp and $N$-sgrm are equivalent modulo a starting point in $G^{N} (\R^d)$.

\begin{definition}\label{def_extension}
A sgrp $\X$ 
is called {\bf extension} of some $N$-sgrp $\Y$ if 
$$
\langle \X_t, w \rangle = \langle \Y_t, w \rangle \qquad \text{for all $t$ and whenever $|w| \le N$;} 
$$
if this holds for a $N'$-sgrp $\X$, with $N   < N' < \infty$, we call it $N'$-{\bf extension} of $\Y$. We adopt also the same denomination if $\Y$ is a $N$-sgrm and $\X$ is a sgrm ($N'$-sgrm) and one has the same relation above for any $s,t\in [0,T]$.
\end{definition}

When $d=1$, the situation is trivial, and sgrp's are in one-to-one correspondence with smooth scalar paths: An arbitrary scalar path $Y$ with initial point $Y_0=0$ has a  (unique) extension   given by
$$
\X_t = \mathbf{1} + Y_t \,e_1 +  \frac{(Y_t)^2}{2!}\, e_{11}+ ... =\exp_{\otimes}{Y_t} \in T((\bR))\,.
$$
Conversely, every sgrp with $\X_0=\mathbf{1}$ must be of this form, as a consequence of the shuffle relations. When $d>1$ such extensions are never unique. For instance, given the two basis vectors $e_1,e_2\in\mathbb{R}^2$ and denoting by $\mathbf{0} $ the common zero of $(\mathbb{R}^d)^{\otimes n} $, $n\geq 1$ we see that $t \mapsto (1,\mathbf{0},\mathbf{0})\in T^2(\bR^2)$ and $t \mapsto (1,\mathbf{0}, t[e_1,e_2]) \in T^2(\bR^2)$ are both $2$-sgrps over $\bR$, and hence level-$2$ extensions of the trivial $1$-sgrp $t \mapsto (1,\mathbf{0})$. 

In classical rough path analysis \cite{lyons1998,frizbook},  a graded $p$-variation (or H\"older) condition  is enforced, which guarantees a unique extension. We give here a novel algebraic condition that enforces uniqueness, somewhat similar in spirit to the minimal jump extension of cadlag rough paths in \cite{friz2017general}.
This condition is motivated by the following result.
\begin{proposition} \label{prop:lie1}
Given a sgrm $\X$ over $\bR^d$, for all times $s$ one has,
\begin{equation}\label{prop:diagonal_derivative}
\dot{\X}_{s,s}:= 
\partial_t |_{t=s} \X_{s,t} \in {{\mathcal L}}((\bR^d))\,,
\end{equation}
The analogue statement  { holds}   for $N$-sgrm's, with $ {{\mathcal L}}((\bR^d))$ replaced by its truncation $\cL^N(\bR^d)$. We call the path in \eqref{prop:diagonal_derivative} the \textbf{diagonal derivative} of $\X$.
\end{proposition}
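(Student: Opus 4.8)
The plan is to differentiate the shuffle relations in the second time variable along the diagonal and then recognize the resulting element via the classical dual characterization of Lie series.

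First fix $s\in[0,T]$. Applying the shuffle relation (b.i) to empty words, together with continuity in $t$ and the non‑zero hypothesis, gives $\langle\X_{s,t},\1\rangle\equiv 1$; hence $\X_{s,t}$ is invertible in $T((\R^d))$ (resp.\ $T^N(\R^d)$), so $N$-sgrm's/sgrm's really take values in $G^{(N)}(\R^d)$. Setting $s=u=t$ in Chen's relation (b.ii) yields $\X_{s,s}\otimes\X_{s,s}=\X_{s,s}$, and cancelling one (invertible) factor gives $\X_{s,s}=\1$; in particular $\langle\X_{s,s},w\rangle=0$ for every non‑empty word $w$. By the smoothness assumption (b.iii), $t\mapsto\langle\X_{s,t},w\rangle$ is smooth for each $w$, so $\dot\X_{s,s}:=\partial_t|_{t=s}\X_{s,t}$ is a well‑defined element of $T((\R^d))$ (resp.\ $T^N(\R^d)$), with $\langle\dot\X_{s,s},\1\rangle=\partial_t|_{t=s}\langle\X_{s,t},\1\rangle=0$.

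The key algebraic input is Ree's criterion: an element $\ell\in T((\R^d))$ with $\langle\ell,\1\rangle=0$ lies in ${\mathcal L}((\R^d))$ if and only if $\langle\ell,u\shuffle v\rangle=0$ for all non‑empty words $u,v$; since the shuffle product is homogeneous, the statement holds levelwise, so $\ell\in\cL^N(\R^d)$ iff in addition $\langle\ell,u\shuffle v\rangle=0$ for all non‑empty $u,v$ with $|u|+|v|\le N$ (see e.g.\ \cite{reutenauer1993free}). It thus suffices to check that $\dot\X_{s,s}$ kills all such non‑trivial shuffles. Given non‑empty words $u,v$ (with $|u|+|v|\le N$ in the truncated case), $u\shuffle v$ is a finite combination of words of length $|u|+|v|$, so $t\mapsto\langle\X_{s,t},u\shuffle v\rangle$ is smooth and, by (b.i), equals $t\mapsto\langle\X_{s,t},u\rangle\langle\X_{s,t},v\rangle$ identically in $t$. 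Differentiating at $t=s$ and using $\X_{s,s}=\1$,
\[
\langle\dot\X_{s,s},u\shuffle v\rangle=\langle\dot\X_{s,s},u\rangle\langle\X_{s,s},v\rangle+\langle\X_{s,s},u\rangle\langle\dot\X_{s,s},v\rangle=0 .
\]
Combined with $\langle\dot\X_{s,s},\1\rangle=0$, Ree's criterion gives $\dot\X_{s,s}\in{\mathcal L}((\R^d))$, respectively $\cL^N(\R^d)$ in the truncated case.

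I do not expect a genuine obstacle; the only point requiring care is the precise form of the Lie‑series criterion in the truncated algebra, where everything must be read off degree by degree up to $N$ — which is exactly why the joint‑length restriction $|u|+|v|\le N$ in (b.i) is the right hypothesis. An alternative, less self‑contained route would be to observe that $\proj_N\X_{s,\cdot}$ is a smooth path in the finite‑dimensional Lie group $G^N(\R^d)$ passing through $\1$ at time $s$, so its derivative lies in the Lie algebra $\cL^N(\R^d)$, and then to pass to the series case by identifying ${\mathcal L}((\R^d))$ with the inverse limit of the $\cL^N(\R^d)$; this, however, ultimately rests on the same algebraic fact.
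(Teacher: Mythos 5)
Your argument is correct, but it follows a different route from the paper. You differentiate the shuffle relations along the diagonal (using $\X_{s,s}=\1$, obtained from Chen's relation and invertibility) and then invoke Ree's criterion: an element annihilating $\1$ and all non-trivial shuffles $u\shuffle v$ is a Lie series, read degree by degree in the truncated case — which is exactly why the joint-length restriction $|u|+|v|\le N$ in (b.i) suffices. The paper instead argues geometrically: it views $t\mapsto \X_{0,t}$ as a smooth $G^N(\R^d)$-valued path, identifies $\dot\X_{s,s}=\X_s^{-1}\otimes\dot\X_s$ as the left Maurer--Cartan form evaluated at the tangent vector $\dot\X_s$, hence an element of the Lie algebra $\cL^N(\R^d)$, and gets the series statement because $N$ is arbitrary — essentially the ``alternative route'' you sketch in your last sentence. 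Your dual/primitivity argument is more elementary and self-contained (no Lie-group structure on $G^N(\R^d)$ is needed, only the classical characterization of Lie series, of which the paper's Remark 2.6 is the $N=2$ instance), and it transfers verbatim to the quasi-shuffle and general Hopf-algebraic settings of Sections 3--4, where the same computation says that the diagonal derivative is an infinitesimal character. What the paper's proof buys is brevity and the Maurer--Cartan viewpoint that it exploits systematically afterwards (Cartan development, minimal extension, renormalization), so the geometric identification is not incidental but the conceptual thread of the article.
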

\begin{proof} A geometric proof is not difficult. We define the $G^N(\R^d)$-valued path $\X_t := \X_{0,t}$ so that $\dot{\X}_{s,s}:= \X_s^{-1} \otimes \dot{\X}_s$
which is exactly the  Maurer--Cartan form at $\X_s$, evaluated at the tangent vector $\dot{\X}_s$. This results in an element in the Lie algebra, which here is identified with $\cL^N(\bR^d)$. Since $N$ is abitrary, this also implies the first claim.
\end{proof}

\begin{remark}\label{rk_N2}
 Let us illustrate the previous Proposition in case $N=2$. In view of the defining shuffle relation of sgrm's, applied to single letter words $i$ and $j$,
\[\langle\X_{s,t},i \rangle\langle\X_{s,t},j  \rangle= \langle \X_{s,t},ij\rangle+\langle\X_{s,t},ji\rangle. \]
 Dividing this identity by $t-s>0$, followed by sending $t\downarrow s$, one has immediately
 \begin{equation}\label{diagonal_derivative}
     \langle \dot{\X}_{s,s},ij\rangle+\langle\dot{\X}_{s,s},ji\rangle=0\,
 \end{equation}
so that the second tensor level of $\dot{\X}_{s,s}$ is anti-symmetric, hence $\dot{\X}_{s,s} \in \R^d \oplus [\R^d,\R^d]$.
\end{remark}

\begin{theorem}[Fundamental Theorem of sgrm]\label{fundthmSGRP}
{Given an $N$-sgrm  $\Y$ for some $N \in \bN$}, there exists exactly one sgrm extension $\X$  of $\Y$ 
which is minimal in the sense that for all $s\in[0,T]$ one has
$$
\dot{\X}_{s,s} \in \cL^N (\bR^d) \subset \cL ((\bR^d)).
$$
This unique choice then in fact satisfies $\dot{\X}_{s,s}=\dot{\Y}_{s,s}$.
We call $\mathrm{MinExt(\Y)} := \X$ the {\bf minimal extension} of $\Y$ and also $\mathrm{MinExt^{N'}(Y)} := \proj_{N'} \X$, for $N' > N$, the $N'$-minimal extension of $\Y$. For a fixed interval $[s,t] \subset [0,T]$ it holds that $\X_{s,t}$ only depends on $\{ \Y|_{[u,v]}: s\le u \le v \le t \}$ and we introduce the {\bf signature} of $\Y$ on $[s,t]$ by 
$$\mathrm{Sig}(\Y|_{[s,t]}) := \X_{s,t} \in G(\bR^d).$$
\end{theorem}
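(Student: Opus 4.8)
The plan is to reduce the whole statement to Cartan development, exactly as sketched in the introduction. Given the $N$-sgrm $\Y$, I would set $\mathfrak{y}(t) := \dot{\Y}_{t,t}$, which by Proposition~\ref{prop:lie1} is a smooth path with values in $\cL^N(\bR^d)$, and define the candidate extension $\X$ as the projective solution of the left-invariant linear equation
\[
\dot{\X}_t = \X_t \otimes \mathfrak{y}(t), \qquad \X_0 = \1 .
\]
Concretely: for each $M$ solve the finite-dimensional linear ODE $\dot{\X}^M_t = \X^M_t \otimes_M \mathfrak{y}(t)$, $\X^M_0 = \1$ in $T^M(\bR^d)$ (unique global smooth solution); since $\proj_{M}$ is an algebra morphism intertwining $\otimes$ and $\otimes_M$ and commuting with $\partial_t$, the solutions are compatible and assemble into a smooth $\X_t \in T((\bR^d))$. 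Each $\X^M_t$ is the Cartan development of $\mathfrak{y}$ in the finite-dimensional Lie group $G^M(\bR^d)$, hence stays in $G^M(\bR^d)$; therefore $\X_t \in G(\bR^d)$ by the shuffle characterization \eqref{equ:recallG}. Thus $\X$ is an sgrp, and passing to increments $\X_{s,t} := \X_s^{-1} \otimes \X_t$ produces an sgrm.

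Next I would verify that $\X$ is a minimal extension of $\Y$ and that $\dot{\X}_{s,s}=\dot{\Y}_{s,s}$. The latter, and minimality, are immediate from the defining ODE: $\dot{\X}_{s,s} = \X_s^{-1} \otimes \dot{\X}_s = \X_s^{-1}\otimes\X_s\otimes\mathfrak{y}(s) = \mathfrak{y}(s) = \dot{\Y}_{s,s} \in \cL^N(\bR^d)$. For the extension property, apply $\proj_N$ to the defining equation: $\proj_N\X_{0,\cdot}$ solves $\dot Z_t = Z_t\otimes_N\mathfrak{y}(t)$, $Z_0=\1$, in $T^N(\bR^d)$. On the other hand, differentiating Chen's relation \eqref{eq:Chensrel} for $\Y$ in the right variable and setting the free base point equal to $t$ gives $\dot{\Y}_{0,t} = \Y_{0,t}\otimes_N\dot{\Y}_{t,t} = \Y_{0,t}\otimes_N\mathfrak{y}(t)$, with $\Y_{0,0}=\1$ (Chen plus invertibility). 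Hence $\Y_{0,\cdot}$ solves the same finite-dimensional linear IVP, so $\proj_N\X_{0,t}=\Y_{0,t}$, and applying Chen once more, $\proj_N\X_{s,t}=\Y_{s,t}$: $\X$ extends $\Y$.

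For uniqueness, let $\X'$ be any minimal sgrm extension of $\Y$. Differentiating the identity $\langle\X'_{s,t},w\rangle = \langle\Y_{s,t},w\rangle$ (all $|w|\le N$) in $t$ at $t=s$ gives $\proj_N\dot{\X}'_{s,s} = \dot{\Y}_{s,s} = \mathfrak{y}(s)$; minimality forces $\dot{\X}'_{s,s}\in\cL^N(\bR^d)\subset\cL((\bR^d))$, hence it has no components in degrees $>N$, and together with the previous identity this gives $\dot{\X}'_{s,s} = \mathfrak{y}(s)$ exactly. As in the previous paragraph $\X'_{0,\cdot}$ then solves $\dot{\X}'_{0,t}=\X'_{0,t}\otimes\mathfrak{y}(t)$, $\X'_{0,0}=\1$, which is precisely the IVP defining $\X_{0,\cdot}$; by level-wise uniqueness of linear ODEs $\X'_{0,\cdot}=\X_{0,\cdot}$, whence $\X'_{s,t}=\X_{s,t}$ for all $s,t$.

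Locality follows by restarting the equation at $s$: on $[s,t]$ the path $r\mapsto\X_s^{-1}\otimes\X_r$ solves $\dot Z_r = Z_r\otimes\mathfrak{y}(r)$, $Z_s=\1$, so $\X_{s,t}$ is its time-$t$ value and depends only on $\mathfrak{y}|_{[s,t]}=\dot{\Y}_{\cdot,\cdot}|_{[s,t]}$, hence only on $\{\Y|_{[u,v]}:s\le u\le v\le t\}$; and $\X_{s,t}\in G(\bR^d)$ since a (tensor series) sgrm takes values in $G(\bR^d)$, so the definition of $\mathrm{Sig}(\Y|_{[s,t]})$ is legitimate. The steps needing the most care are the level-wise construction and the degree bookkeeping that goes with it — in particular checking that the truncation of the series equation is genuinely solved by $\Y_{0,\cdot}$ (the heart of the extension property) and that minimality plus the extension property pin $\dot{\X}'_{s,s}$ down exactly; the rest is routine uniqueness for linear ODEs.
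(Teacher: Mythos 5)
Your proof is correct. The existence half is essentially the paper's own argument: both set $\mathfrak{y}(t)=\dot{\Y}_{t,t}\in\cL^N(\R^d)$ via Proposition \ref{prop:lie1} and construct $\X$ as the level-wise (projective) Cartan development, reading off minimality and $\dot{\X}_{s,s}=\dot{\Y}_{s,s}$ from the defining ODE; you additionally make explicit the step the paper leaves implicit, namely that $\proj_N\X_{s,t}=\Y_{s,t}$, by checking that $\Y_{0,\cdot}$ solves the truncated problem $\dot Z=Z\otimes_N\mathfrak{y}$, $Z_0=\1$, and invoking ODE uniqueness plus Chen. Where you genuinely depart from the paper is uniqueness. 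The paper argues level by level: two extensions agreeing up to level $N'$ differ by a central element of $G^{N'+1}(\R^d)$, so each coordinate $\Psi_{s,t}=\langle\X_{s,t}-\bar{\X}_{s,t},w\rangle$ with $|w|=N'+1$ is additive, and minimality forces $\dot\Psi\equiv 0$, hence $\Psi\equiv 0$. You instead show that the extension property together with minimality pins down the \emph{full} diagonal derivative of any minimal extension to be exactly $\mathfrak{y}(s)$, so that (differentiating Chen's relation, using smoothness (b.iii)) $\X'_{0,\cdot}$ satisfies the same linear initial value problem as the constructed solution, and you conclude by level-wise uniqueness of linear ODEs. Both arguments hinge on the same observation -- the degree-$>N$ components of the diagonal derivative are forced to vanish -- but your route handles all levels at once and recycles the Cartan-development uniqueness already used for existence, whereas the paper's route isolates the structure of non-minimal extensions (they differ by additive, central corrections), which is conceptually useful elsewhere. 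The locality/signature statement is handled at the same (adequate) level of detail as in the paper, by restarting the ODE at time $s$.
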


\begin{proof} (Existence) 
Thanks to the previous proposition
$$
 \mathfrak{y} (t) := \dot{\Y}_{t,t} \in \cL^N (\bR^d) 
$$
defines a smooth path with values in the Lie algebra 
$\cL^N (\bR^d)$. Its Cartan development into $G(\R^d)$ amounts to solve for $\dot{\X}_t = \X_t \otimes \mathfrak{y} (t), \X_0 = \mathbf{1} \in G(\R^d)$. It is enough to do this in finite dimensions, say in $G^{N'}(\R^d)$ for arbitrary $N'>N$, in which case such differential equations have a unique and global solution. Indeed, existence of a unique local solution is clear from ODE theory, whereas non-explosion is a consequence of linearity of this equation. See also a much more general reference \cite{iserles1999solution}. Since the natural projections from $G^{N'+1}(\R^d) \to G^{N'}(\R^d)$ are Lie group morphisms, we obtain a consistent family of $N'$-extensions which defines $t\mapsto\X_t$ in the projective limit. The minimal extension is then given by $\X_{s,t} = \X_s^{-1} \otimes \X_t$, where we note that
$$
   \dot{\X}_{t,t} = \X_t^{-1} \otimes  \dot{\X}_t = \mathfrak{y} (t) \in \cL^N (\bR^d) \,.
$$

(Uniqueness) It is sufficient to consider $N'=N+1$. In this case two level-$N'$ extensions $\X,\bar{\X}$, differ by an element in the center of $G^{N'}(\R^d)$, so that $\Psi_{s,t} := \langle \X_{s,t} -\bar{\X}_{s,t}, w \rangle$ is additive for every word $|w| \le N+1$. We need to show that $\Psi \equiv 0$. By assumption, $\Psi$ vanishes on words of length $|w| \le N$, so we can assume $|w| = N'=N+1$. 
With $\Psi_t := \Psi_{0,t}$ note that $\Psi_{s,t} = \Psi_t - \Psi_s$. Write 
$\X_t = \X_{0,t}$, and similarly for $\bar{\X}$, and also  $\sim$ for equality in the limit $t\downarrow s$. Then
$$
   \dot{\Psi}_s 
    \sim
    \frac{\Psi_{s,t}}{(t-s)}
    =
    \frac{\langle \X^{-1}_s\otimes (\X_t - \X_s) -\bar{\X}^{-1}_{s} \otimes (\bar{\X}_t -\bar{\X}_s ), w \rangle }{(t-s)}
    \sim
    \langle \X^{-1}_s\otimes \dot{\X}_s  -\bar{\X}^{-1}_{s} \otimes \dot{\bar{\X}}_s , w \rangle 
$$
and in view of the minimality assumption of order $N$ of both $\X,\bar{\X}$, with the condition $|w| = N+1$, we see that $\dot \Psi$ is zero, hence $\Psi \equiv \Psi_0 =0$ which concludes the argument for uniqueness.
\end{proof}
\begin{remark}[Computing the minimal extension] 
Note that the existence part of this proof is constructive and gives the minimal extension via solving a linear differential equation. To make this explicit in case of a $N$-sgrp $Y$, it suffices to solve
$$
      \dot{\X} = \X \otimes (\Y^{-1} \otimes_N \dot{\Y}), \qquad  {\X}_0 = \mathbf{1} \in T((\bR^d))\,.
$$
To compute the signature on $[s,t]$, it suffices to start at the initial condition ${\X}_s = \mathbf{1}$.
\end{remark}

It follows automatically from the proof of Theorem \ref{fundthmSGRP} that, 
defining $\mathrm{MinExt}(N)$ as the class of sgrms which arise as minimal  extension of some $N$-sgrms over $\bR^d$, we have the inclusions
$$
       \mathrm{MinExt}(1) \subset \mathrm{MinExt}(2) \subset \cdots \subset \mathrm{MinExt}(N )  \subset \cdots \subset \{\text{sgrm over $\bR^d$} \}\,,
$$
\[\bigcup_{N\geq 1}\mathrm{MinExt}(N)\subset \{\text{sgrm over $\bR^d$} \}\,.\]
When $d>1$, all inclusions above are strict. For instance, take a non-finite Lie series, i.e. $\mathfrak{v} \in \mathcal{L}((\mathbb{R}^d)) \backslash \mathcal{L}(\mathbb{R}^d)$. Then $\X_{s,t} = \exp_{\otimes}(\mathfrak{v}(t-s))$ defines a sgrm, which is not a minimal extension of any $N$-sgrm. These strict inclusions motivate the following finer subset of sgrms.

\begin{definition}\label{defn_good_model_smooth}
A sgrm $\X$ over $\bR^d$ is called a {\bf good sgrm} if $\X= \mathrm{MinExt(\Y)}$ for some $N$-sgrm $\Y$,  $N \in \mathbb{N}$.
\end{definition}

Recalling  that a minimal extension has the same diagonal derivative as the underlying  $N$-sgrm, one  has the immediate  characterisation of good sgrms.
\begin{lemma} \label{lem:good} A sgrm $\X$ is good if and only if for all times $s\in [0,T]$ one has 
\begin{equation}\label{prop:diagonal_derivative_good}
\dot{\X}_{s,s} 
 \in \mathcal{L}(\bR^d)\,.
\end{equation}
\end{lemma}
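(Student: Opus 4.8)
The plan is to prove both directions of the equivalence, exploiting the structure already established by Proposition~\ref{prop:lie1} and Theorem~\ref{fundthmSGRP}.

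\textbf{Forward direction (good $\Rightarrow$ diagonal derivative in $\mathcal{L}(\bR^d)$).} Suppose $\X$ is good, so $\X = \mathrm{MinExt}(\Y)$ for some $N$-sgrm $\Y$. By the final assertion of Theorem~\ref{fundthmSGRP}, the minimal extension satisfies $\dot{\X}_{s,s} = \dot{\Y}_{s,s}$ for all $s$, and the minimality condition of that theorem says precisely $\dot{\X}_{s,s} \in \cL^N(\bR^d) \subset \cL(\bR^d)$. Hence \eqref{prop:diagonal_derivative_good} holds. This direction is essentially a restatement of what Theorem~\ref{fundthmSGRP} already gives us, so it is immediate.

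\textbf{Reverse direction (diagonal derivative in $\mathcal{L}(\bR^d)$ $\Rightarrow$ good).} Suppose $\dot{\X}_{s,s} \in \cL(\bR^d)$ for all $s \in [0,T]$. Since $[0,T]$ is compact and $s \mapsto \dot{\X}_{s,s}$ is continuous (it is a smooth path by (b.iii), being a limit of difference quotients of smooth maps), the image of this path lies in a finite-dimensional subspace of $\cL(\bR^d)$; in particular there exists $N \in \bN$ with $\dot{\X}_{s,s} \in \cL^N(\bR^d)$ for all $s$. Actually one must be a little careful here: continuity of a path into $\cL(\bR^d) = \bigcup_N \cL^N(\bR^d)$ with its natural topology (colimit/projective-limit structure) need not force the image into a single $\cL^N$ in general, but here each component $\langle \dot{\X}_{s,s}, w\rangle$ is a smooth scalar function and for a \emph{fixed} word $w$ one can ask whether it vanishes identically; the cleaner route is simply to set $\Y := \proj_N \X$ for large enough $N$ and show $\X = \mathrm{MinExt}(\Y)$. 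Take any $N$ and let $\Y := \proj_N \X$; this is an $N$-sgrm (the shuffle and Chen relations and smoothness all descend under $\proj_N$). Let $\Z := \mathrm{MinExt}(\Y)$. Then $\Z$ is a sgrm extension of $\Y$ with $\dot{\Z}_{s,s} \in \cL^N(\bR^d)$. On the other hand $\X$ is also a sgrm extension of $\Y$ (trivially, since $\Y = \proj_N\X$), and by hypothesis $\dot{\X}_{s,s} \in \cL(\bR^d)$ --- but to invoke the uniqueness part of Theorem~\ref{fundthmSGRP} we need $\dot{\X}_{s,s} \in \cL^N(\bR^d)$, which is exactly the subtlety. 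The fix is to choose $N$ first as above so that $\dot{\X}_{s,s} \in \cL^N(\bR^d)$ for all $s$; then both $\X$ and $\Z=\mathrm{MinExt}(\proj_N\X)$ are sgrm extensions of $\proj_N \X$ minimal of order $N$, so by uniqueness in Theorem~\ref{fundthmSGRP} we get $\X = \Z = \mathrm{MinExt}(\proj_N\X)$, hence $\X$ is good.

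\textbf{Main obstacle.} The only real point requiring care is establishing that a single $N$ works for \emph{all} $s \in [0,T]$ simultaneously, i.e.\ that $s \mapsto \dot{\X}_{s,s}$, known to take values in $\cL(\bR^d)$, actually lands in $\cL^N(\bR^d)$ for some finite $N$. I would argue this via compactness of $[0,T]$ together with the fact that $s\mapsto \dot{\X}_{s,s}$ is a continuous (indeed smooth) path: for each $s$ there is a least $N(s)$ with $\dot{\X}_{s,s}\in \cL^{N(s)}(\bR^d)$, and by continuity of each coordinate functional $\langle \cdot, w\rangle$ on words $w$ with $N < |w|$, the set $\{s : \dot{\X}_{s,s}\in \cL^N(\bR^d)\}$ is closed; writing $[0,T]$ as the countable union of these closed sets and applying Baire's theorem (or simply noting the path is smooth hence each $\langle\dot\X_{s,s},w\rangle$ is real-analytic-free but still, a smooth function vanishing on a set with accumulation point need not vanish, so Baire is the right tool) one finds that some $\cL^N(\bR^d)$ contains the image. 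Everything else is routine verification that $\proj_N$ preserves the sgrm axioms and a direct appeal to Theorem~\ref{fundthmSGRP}.
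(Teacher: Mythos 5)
Your forward direction, and your projection-plus-uniqueness scheme for the converse, are exactly the paper's (one-line) argument: the lemma is presented there as an immediate consequence of the fact that a minimal extension has the same diagonal derivative as the underlying $N$-sgrm. The genuine problem is the step you yourself flag and then try to repair: producing a \emph{single} $N$ with $\dot{\X}_{s,s}\in\cL^N(\bR^d)$ for all $s\in[0,T]$. Your Baire argument does not deliver this. The sets $A_N=\{s\in[0,T]:\langle\dot{\X}_{s,s},w\rangle=0\text{ for all }|w|>N\}$ are indeed closed and cover $[0,T]$, but Baire only gives some $A_N$ with nonempty interior, i.e.\ a subinterval on which the degree is bounded; since the coordinate functions are merely smooth, vanishing on a subinterval does not propagate to all of $[0,T]$, so you cannot conclude $A_N=[0,T]$. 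Worse, no argument can close this gap from the pointwise hypothesis alone: choose homogeneous Lie polynomials $v_n$ of degree $n$ and smooth bumps $\varphi_n$ supported in $[1/(n+1),1/n]$, set $\mathfrak{y}(s)=\sum_n\varphi_n(s)\,v_n$ (for each fixed word $w$ only the term $n=|w|$ contributes, so every coordinate is smooth), and let $\X$ be the Cartan development of $\mathfrak{y}$. This is a sgrm with $\dot{\X}_{s,s}=\mathfrak{y}(s)\in\cL(\bR^d)$ for \emph{every} $s$, yet $\dot{\X}_{s,s}$ lies in no fixed $\cL^N(\bR^d)$, hence $\X$ is not the minimal extension of any $N$-sgrm.

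So the converse only goes through under the uniform reading of \eqref{prop:diagonal_derivative_good}, namely that $\dot{\X}_{s,s}\in\cL^N(\bR^d)$ for some $N$ independent of $s$ --- which is how the paper implicitly reads it, since ``good'' by definition involves a single $N$. Under that reading the rest of your argument is correct and is the intended one: $\Y:=\proj_N\X$ is an $N$-sgrm (shuffle, Chen and smoothness all descend under $\proj_N$), both $\X$ and $\mathrm{MinExt}(\Y)$ are extensions of $\Y$ with diagonal derivative in $\cL^N(\bR^d)$, and the uniqueness part of Theorem \ref{fundthmSGRP} gives $\X=\mathrm{MinExt}(\Y)$. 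As written, however, the Baire step is a genuine error: you should either assume (or build into the statement) the uniform bound on the degree of $\dot{\X}_{s,s}$, or accept that the pointwise condition is strictly weaker than goodness.
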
 

We link the notion of minimal extension with previous constructions in the literature.

\begin{example}\label{sec:example1}
 Every smooth path $Y: [0,T] \to \bR^d$ can be regarded (somewhat trivially) as $1$-sgrp $\Y = (1,Y)$. The solution to
$$
       \dot{\X} = \X \otimes {(\Y^{-1} \otimes_1 \dot{\Y})} = \X \otimes \dot{Y}, \qquad {\X}_0 = \mathbf{1} \in T((\bR^d))
 $$
is then precisely given by the stack of iterated integrals
$$
        \X_t = \mathrm{Sig}(Y|_{[0,t]}) = \left( 1 , \int dY, ... \,, \int (dY)^{\otimes n}, ...\right) \in T((\bR^d)) 
$$
with $n$-fold integration over the $n$-simplex over $[0,t]$, i.e. 
 $0 \le u_1 \le \dots \le u_n \le t$. The signature of $\Y$ on $[s,t]$ coincides then with the usual definition of signature of $Y$, see e.g.\ \cite{lyons2007differential}, modulo a choice of unit initial data at time $s$, which entails integration over simplices over $[s,t]$.  
\end{example}
 
\begin{example}\label{sec:example2}
Fix a step-$2$ Lie element over $\bR^d$,
$$
\mathfrak{v} = (\mathfrak{a},\mathfrak{b}) \in \bR^d \oplus [\bR^d, \bR^d] 
\equiv \cL^2 (\bR^d),
$$ 
where $[\bR^d, \bR^d]$ stands for the set of anti-symmetric $2$-tensors over $\bR^d$. Consider the example of a $2$-sgrm given by   
$$
    \Y_{s,t}= (1, \mathfrak{a}(t-s),\mathfrak{b}(t-s) + \tfrac{1}{2} \mathfrak{a}^{\otimes 2}(t-s)^2) = \exp_{\otimes_2} (\mathfrak{v}(t-s)) \in T^2 (\bR^d).
$$
One  easily sees that $\dot{\Y}_{t,t}= \mathfrak{v}$ for all $t\in [0,T]$. Since $\mathfrak{v}$ is constant in time, the differential equation 
$$
      \dot{\X} = \X \otimes \mathfrak{v}\,, \qquad {\X}_s = 1 \in T((\bR^d))
$$
has an explicit exponential solution at any time $t$ given by
\begin{equation} \label{equ:ConstSpeedRP}
            \X_{s,t} = \exp_\otimes ( \mathfrak{v}(t-s) ) = 1 + \mathfrak{v}(t-s) + \tfrac{1}{2}
            \mathfrak{v}^{\otimes 2}(t-s)^2 + \dots \in T((\bR^d))\, .
\end{equation}
This is precisely the signature of $\Y$ on $[s,t]$ and $\X$ is the minimal extension of $\Y$. Modulo a starting point in the group such a construction is well-known as a \textbf{log-linear rough path}, here the special case of second order logarithm. The case $\mathfrak{v} = (\mathfrak{a},0) $ is covered in Example \ref{sec:example1} via the constant velocity path $\dot{Y} \equiv\mathfrak{a}$. By taking $\mathfrak{v} = (0,\mathfrak{b})$ the minimal extension coincides with a \textbf{pure area rough path}, see \cite{galuppi2019rough} for an algebraic geometric perspective on these structures. The explicit exponential solution \eqref{equ:ConstSpeedRP} is possible here thanks to the constant velocity $\mathfrak{v}$. In a general situation, with time-dependent $\mathfrak{v}=\mathfrak{v} (t)$ the solution has exponential form given by Magnus expansion and   additional commutator terms will appear, see e.g. \cite{iserles1999solution}.
\end{example}

We finish this subsection by relating sgrp to (weakly) geometric rough paths in the sense of standard definitions as found e.g. in \cite{frizbook,hairer2015geometric}.
\begin{proposition}\label{prop:smooth_vs_normal}
One has the following properties:
\begin{itemize}
\item[(i)]Every $N$-sgrp $\X$ is a weakly $1/N$-Hölder weakly geometric rough path.  Consequently, $\X$ is also a $\gamma$-Hölder geometric rough path, for any $1/\gamma \in (N,N+1)$, and the set of $N$-sgrp is dense therein.
\item[(ii)] The minimal extension of some $N$-sgrp $\X$ to a  coincides with the Lyons lift of $\X$, as constructed e.g. in \cite[Ch. 9]{frizbook}.
\end{itemize}
\end{proposition}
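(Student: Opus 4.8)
The plan is to prove the two assertions separately: part (i) reduces to an elementary Hölder estimate plus a citation of the standard sub-Riemannian approximation theory, while part (ii) follows by checking that the minimal extension meets the hypotheses of the uniqueness clause of Lyons' extension theorem.

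\emph{Part (i).} Recall (as noted right after Definition~\ref{defn_model_smooth}) that an $N$-sgrp $\X$ is a smooth path with values in $G^N(\bR^d)$, and that its increments $\X_{s,t}=\X_s^{-1}\otimes_N\X_t$ automatically satisfy Chen's relation and the truncated shuffle relations, so $\X$ is already a weakly geometric rough path in the algebraic sense and only the analytic bound remains. Since each $t\mapsto\langle\X_t,w\rangle$ is smooth and $[0,T]$ is compact, one has $|\langle\X_{s,t},w\rangle|\le C_{|w|}\,|t-s|$ for every word $|w|\le N$. As the Carnot--Carathéodory norm is comparable to $\max_{1\le k\le N}\max_{|w|=k}|\langle\X_{s,t},w\rangle|^{1/k}$, this gives $\|\X_{s,t}\|_{CC}\lesssim\max_{k\le N}|t-s|^{1/k}=|t-s|^{1/N}$ for $|t-s|\le1$, and a routine subadditivity argument extends the bound to all of $[0,T]$; hence $\X$ is a $1/N$-Hölder weakly geometric rough path. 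For $1/\gamma\in(N,N+1)$ one has $\gamma<1/N$, so $\X$ is a fortiori $\gamma$-Hölder, and I would then invoke the embedding of $1/N$-Hölder weakly geometric rough paths into $\gamma$-Hölder (strongly) geometric ones (see \cite{frizbook}) to upgrade ``weakly geometric'' to ``geometric''. Finally, by Example~\ref{sec:example1} the canonical lift of a smooth $\bR^d$-valued path is an $N$-sgrp; such lifts are, by definition, dense in the space of $\gamma$-Hölder geometric rough paths, and since every $N$-sgrp lies in that space the $N$-sgrp's are dense in it.

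\emph{Part (ii).} Write $\Z:=\mathrm{MinExt}(\X)$, the smooth path solving $\dot\Z_t=\Z_t\otimes\mathfrak{y}(t)$ with $\mathfrak{y}(t)=\dot\X_{t,t}\in\cL^N(\bR^d)$, as in the existence proof of Theorem~\ref{fundthmSGRP}. It suffices to show that for each $N'>N$ the truncation $\proj_{N'}\Z$ is $1/N$-Hölder with respect to the Carnot--Carathéodory metric on $G^{N'}(\bR^d)$: granting this, $\proj_{N'}\Z$ is a $1/N$-Hölder weakly geometric extension of $\X$ to level $N'$ (weak geometricity being inherited from $\Z$ being an sgrm), and the uniqueness part of Lyons' extension theorem (\cite[Ch.~9]{frizbook}) forces $\proj_{N'}\Z$ to coincide with the Lyons lift of $\X$ to level $N'$; since $N'$ is arbitrary, $\Z$ is the Lyons lift. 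To obtain the Hölder bound I would argue by induction on the degree $k$ from $N+1$ upward using the defining ODE: projecting onto degree $k$,
$$
\Z^{(k)}_{s,t}=\int_s^t\big(\Z_{s,r}\otimes\mathfrak{y}(r)\big)^{(k)}\,dr=\int_s^t\sum_{j=1}^{\min(N,k-1)}\Z^{(k-j)}_{s,r}\otimes\mathfrak{y}^{(j)}(r)\,dr,
$$
and since $|\Z^{(m)}_{s,r}|\lesssim|r-s|^{m/N}$ for $m\le N$ (part (i), with room to spare since $\X$ is smooth) and for $N<m<k$ (induction hypothesis), while $\mathfrak{y}$ is bounded on $[0,T]$, integration yields $|\Z^{(k)}_{s,t}|\lesssim\sum_j|t-s|^{(k-j)/N+1}\lesssim|t-s|^{k/N}$, the borderline term being $j=N$. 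Together with the degrees $\le N$ this gives $\|\Z_{s,t}\|_{CC}\lesssim|t-s|^{1/N}$, as required.

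\emph{Where the difficulty lies.} Almost everything here is either already recorded in the text or a direct consequence of smoothness and the minimal-extension ODE; the one genuinely external ingredient is the embedding/density statement in part (i), i.e.\ that a weakly geometric $1/N$-Hölder rough path actually lies in the $\gamma$-Hölder closure of canonical lifts of smooth paths. This rests on Chow-type horizontal connectivity in $G^N(\bR^d)$ together with the loss of exactly one step of Hölder regularity in the approximation, and is the natural place to defer to \cite{frizbook} rather than re-derive it with the algebraic machinery of the present paper.
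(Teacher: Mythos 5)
Your proof is correct. Part (i) follows the paper's own argument almost verbatim: smoothness gives $|\langle\X_{s,t},w\rangle|\lesssim|t-s|\lesssim|t-s|^{|w|/N}$ for $|w|\le N$ (you phrase it through the equivalent Carnot--Carath\'eodory homogeneous norm, the paper through the word-by-word bounds), and both defer the weakly-geometric-to-geometric embedding and the density of canonical lifts to \cite{frizbook}. Part (ii) is where you genuinely diverge. The paper's proof is a one-line identification: the minimal extension solves the linear differential equation $\dot\Z=\Z\otimes\dot\X_{t,t}$ in the sense of Definition \ref{def:DEsgrp}, the Lyons lift solves the analogous linear \emph{rough} differential equation, and for smooth drivers these two notions of solution agree (this is the content of the consistency with Davie-type expansions, Proposition \ref{prop:euler}), so the two objects coincide by uniqueness of the RDE solution. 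You instead verify directly, by induction on the tensor degree using the same defining ODE, that every truncation $\proj_{N'}\mathrm{MinExt}(\X)$ satisfies the homogeneous $1/N$-H\"older bound $|\Z^{(k)}_{s,t}|\lesssim|t-s|^{k/N}$, and then invoke the uniqueness clause of Lyons' extension theorem to conclude that this extension must be the Lyons lift. Your induction is sound (the borderline term $j=N$ gives exactly the exponent $k/N$, and weak geometricity, which is anyway not needed for Lyons' uniqueness, is inherited from the sgrm property), so this is a valid alternative: it is more quantitative and self-contained on the analytic side, at the price of importing the uniqueness statement of Lyons' theorem, whereas the paper's route stays entirely within its own framework of differential equations driven by smooth rough paths.
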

\begin{proof}

(i) We only need to discuss the analytic regularity, which is formulated for the increments $\X_{s,t} = \X^{-1} \otimes \X_t$, i.e. the associated rough model.
Thanks to (b.iii), we have 
$|\langle\X_{s,t},w\rangle| \lesssim |t-s|  \lesssim |t-s|^{|w|(1/N)}$ uniformly over $s,t \in [0,T]$, using that $|w|\le N$. The consequence follows from well-known relations between geometric and weakly geometric rough paths \cite[Ch. 9]{frizbook}. The final density statement is clear, since already the minimal level-$N$ extensions of smooth paths (a.k.a. canonical lifts) are dense in this level-$N$ rough paths space, cf. \cite{frizbook}.

(ii) We remark that the minimal extension of $\X$ can be constructed as solving a linear differential equation driven by $\X$, in the sense of Definition \ref{def:DEsgrp} below, whereas the Lyons lift solves the analogous rough differential equations.
\end{proof}

\subsection{Canonical sum and minimal coupling of smooth geometric rough models}
\label{sec:srplus_geom}

As was pointed out, the state space of a sgrp is $G(\R^d)$, a non-linear subset of $T (( \R^d))$. In particular,  the pointwise sum $\X_t + \Y_t$ of any given   two sgrps $\X,\Y$ does not generally make sense  as a sgrp. From a classical rough path perspective, the obstruction to making sense of  the addition of rough path increments lies in its possible dependence on (a priori) missing mixed iterated integrals. For smooth rough paths, however, it turns out that there is a canonical way to add smooth geometric rough models and, more generally a scalar multiplication.
\begin{definition}\label{def:sum_and_scalar_multiplication} For any fixed sgrms $\X,\Y$ let $t\mapsto\Z_t \in T (( \bR^d ))$ be the Cartan development of $\dot\X_{s,s}+\dot\Y_{s,s}$, i.e. the unique solution to
$$
\dot\Z_t=  \Z_t\otimes \left(\dot\X_{t,t}+\dot\Y_{t,t}\right), \quad \Z_0 = \1.
$$
We then write $\Z := \X \srplus\Y$ for the associated sgrm and call it the \textbf{canonical sum} of $\X$ and $\Y$. For any $\lambda\in \mathbb{R}$ we define also the sgrm $\Z=\lambda\srscalar\X$ via the Cartan development of  $\lambda \dot\X_{s,s}$, we call it the \textbf{canonical scalar multiplication}.
\end{definition}
We will see later that this addition of sgrm overlaps non-trivially with the renormalization of rough path. Operations $\srscalar$ and $\srplus$ equip the space of sgrps with a vector space structure and by Lemma \ref{lem:good} good sgrps form a linear subspace.
For some general comments on these operations we refer to Remark \ref{rem:srscalar} in the more general framework of roughs path associated to a Hopf algebra. We simply remark that in the smooth geometric setting the canonical sum coincides with a  construction of Lyons for $p$-variation weakly geometric rough paths, see \cite[Section~3.3.1~B]{lyons1998}.
\begin{proposition}\label{prop:sum}
For any fixed couple of sgrms $\X,\Y$ a map $\Z\colon [0,T]^2\to G(\R^d)$ coincides with $\X \srplus \Y$ if and only if $\Z$ satisfies $\Z_{s,t}=\Z_{s,u}\otimes \Z_{u,t}$ for $s,u,t\in[0,T]$ and one has for any $s\in [0,T]$
 \begin{equation}\label{eq:prop:sum2}
\Z_{s,t}=\X_{s,t}\otimes \Y_{s,t}+R_{s,t}\,,
 \end{equation}
 for some $R_{s,t}\in T((\R^d))$ such that for all $x\in T(\R^d)$ one has  $\langle R_{s,t},x\rangle= o(|t-s|)$ as  $t\to s$. Moreover, we have the relations
 \begin{equation}\label{eq:prop:sum}
 \X_{s,t}\otimes\Y_{s,t}
 =\Y_{s,t}\otimes\X_{s,t}+r_{s,t}
 =\X_{s,t}+\Y_{s,t}-\mathbf{1}^*+r'_{s,t}\,,
 \end{equation}
 for some $r_{s,t}$, $r'_{s,t} \in  T((\R^d))$ such that for all $x\in T(\R^d)$ one has $\langle r_{s,t},x\rangle,\langle r'_{s,t},x\rangle= o(|t-s|)$ as $t\to s$.
\end{proposition}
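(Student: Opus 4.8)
The plan is to reduce everything to a single first‑order Taylor estimate on the diagonal together with the explicit deconcatenation formula for $\otimes$. Recall that for any sgrm $\X$ and any word $x$ the map $t\mapsto\langle\X_{s,t},x\rangle$ is smooth, with $\langle\X_{s,s},x\rangle=\langle\1,x\rangle$ and $\partial_t|_{t=s}\langle\X_{s,t},x\rangle=\langle\dot\X_{s,s},x\rangle$, where $\dot\X_{s,s}\in\cL((\bR^d))$ by Proposition~\ref{prop:lie1}; hence
$$
\langle\X_{s,t},x\rangle=\langle\1,x\rangle+(t-s)\langle\dot\X_{s,s},x\rangle+o(|t-s|)\qquad(t\to s),
$$
and in particular $\langle\X_{s,t},x\rangle=O(|t-s|)$ whenever $x$ is a non‑empty word; the same holds for $\Y$ and, once $\Z$ is seen to be an sgrm, for $\Z$. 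I will also use that for $\mathbf a,\mathbf b\in T((\bR^d))$ and a word $x$ one has $\langle\mathbf a\otimes\mathbf b,x\rangle=\sum_{x=x_1x_2}\langle\mathbf a,x_1\rangle\langle\mathbf b,x_2\rangle$, the sum ranging over the $|x|+1$ splittings of $x$ into an ordered pair of (possibly empty) words.

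First I would establish the two identities \eqref{eq:prop:sum}. In $\langle\X_{s,t}\otimes\Y_{s,t},x\rangle=\sum_{x=x_1x_2}\langle\X_{s,t},x_1\rangle\langle\Y_{s,t},x_2\rangle$ the splittings with both $x_1,x_2$ non‑empty contribute $O(|t-s|)\cdot O(|t-s|)=o(|t-s|)$ (a finite sum), the splitting with $x_1=\1$ contributes $\langle\Y_{s,t},x\rangle$ and the one with $x_2=\1$ contributes $\langle\X_{s,t},x\rangle$ (using $\langle\X_{s,t},\1\rangle=\langle\Y_{s,t},\1\rangle=1$); treating the empty word separately ($\langle\X_{s,t}\otimes\Y_{s,t},\1\rangle=1=1+1-1$) this gives $\langle\X_{s,t}\otimes\Y_{s,t},x\rangle=\langle\X_{s,t},x\rangle+\langle\Y_{s,t},x\rangle-\langle\1,x\rangle+o(|t-s|)$ for all $x$, which is the second identity in \eqref{eq:prop:sum}. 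Subtracting from this the same identity with the roles of $\X$ and $\Y$ exchanged yields the first identity, with $r_{s,t}:=\X_{s,t}\otimes\Y_{s,t}-\Y_{s,t}\otimes\X_{s,t}$.

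Next I would prove the main equivalence. For the ``only if'' direction: by Definition~\ref{def:sum_and_scalar_multiplication} the object $\Z:=\X\srplus\Y$ is an sgrm, hence $G(\bR^d)$‑valued and satisfies Chen's relation, and its diagonal derivative is by construction $\dot\Z_{s,s}=\dot\X_{s,s}+\dot\Y_{s,s}$; comparing the first‑order expansion of $\langle\Z_{s,t},x\rangle$ at $t=s$ with that of $\langle\X_{s,t}\otimes\Y_{s,t},x\rangle$ from the previous step gives $\langle\Z_{s,t}-\X_{s,t}\otimes\Y_{s,t},x\rangle=o(|t-s|)$ for every $x$, i.e.\ \eqref{eq:prop:sum2} holds with $R_{s,t}:=\Z_{s,t}-\X_{s,t}\otimes\Y_{s,t}$. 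For the ``if'' direction, let $\Z\colon[0,T]^2\to G(\bR^d)$ satisfy Chen and \eqref{eq:prop:sum2} with $\langle R_{s,t},x\rangle=o(|t-s|)$ for all $x$ and all base points. Chen gives $\Z_{s,s}=\1$ (invertibility in $G(\bR^d)$) and $\Z_{s,t}=\Z_{0,s}^{-1}\otimes\Z_{0,t}$. Combining the hypothesis on $R$ with the diagonal expansion of $\langle\X_{s,t}\otimes\Y_{s,t},x\rangle$ above shows that for every word $x$ the map $t\mapsto\langle\Z_{s,t},x\rangle$ is differentiable at $t=s$ with derivative $\langle\dot\X_{s,s}+\dot\Y_{s,s},x\rangle$. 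Inserting this into the deconcatenation expansion of $\langle\Z_{0,t_0+h},x\rangle=\langle\Z_{0,t_0}\otimes\Z_{t_0,t_0+h},x\rangle$ and letting $h\to0$ shows that $t\mapsto\Z_t:=\Z_{0,t}$ is differentiable on $[0,T]$ with $\dot\Z_t=\Z_t\otimes(\dot\X_{t,t}+\dot\Y_{t,t})$ and $\Z_0=\1$. Since $t\mapsto\dot\X_{t,t}+\dot\Y_{t,t}$ is a smooth $\cL((\bR^d))$‑valued path (recall $\X,\Y$ are sgrms), this is exactly the Cartan development defining $\X\srplus\Y$; by uniqueness of solutions of this linear equation in each finite truncation $T^{N'}(\bR^d)$ (a differentiable solution is automatically $C^1$, cf.\ the existence argument in Theorem~\ref{fundthmSGRP}) one gets $\Z_{0,t}=(\X\srplus\Y)_{0,t}$, whence $\Z_{s,t}=\Z_{0,s}^{-1}\otimes\Z_{0,t}=(\X\srplus\Y)_{s,t}$ for all $s,t$, i.e.\ $\Z=\X\srplus\Y$.

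The only step that is not bookkeeping is this last ``propagation'' in the ``if'' direction: upgrading the purely diagonal, first‑order information on $\Z$ to honest differentiability of $t\mapsto\Z_t$ at every time and to the identification with the Cartan ODE. The mechanism is Chen's relation, used to move the base point, together with smoothness of the coefficient path $t\mapsto\dot\X_{t,t}+\dot\Y_{t,t}$ (which is where the defining smoothness of the sgrms $\X,\Y$ enters), so that the resulting linear equation has a unique solution. Everything else is Taylor's formula to first order and the handling of the finitely many deconcatenation terms attached to a fixed word.
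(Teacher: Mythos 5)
Your argument is correct and follows essentially the same route as the paper: the paper proves Proposition \ref{prop:sum} by deferring to the general Hopf-algebraic Proposition \ref{prop:minimal_sum}, whose proof is exactly your scheme — the coproduct (here: deconcatenation) expansion of $\langle\X_{s,t}\otimes\Y_{s,t},x\rangle$ with cross terms of order $O(|t-s|^2)$ to get \eqref{eq:prop:sum}, first-order Taylor expansion on the diagonal for the ``only if'' part, and Chen's relation plus (weak) continuity of left multiplication to upgrade the diagonal information to differentiability of $t\mapsto\Z_{0,t}$ and identify it with the Cartan development by uniqueness. Your version is just the specialization of that argument to the tensor algebra with the deconcatenation coproduct, so no substantive difference.
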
 
\begin{proof}
    See the more general Proposition \ref{prop:minimal_sum}.
\end{proof}

\begin{remark}\label{rem:srplus_honest_geom_rp}
Looking back at \cite[Section~3.3.1~B]{lyons1998} and Proposition \ref{prop:smooth_vs_normal}, we actually conclude that it is possible to sum a $N$-sgrm $\X$ to any general $\gamma$-Hölder weakly geometric rough path $\mathbf{W}$ for any $\gamma\in(0,1)$ in a canonical way,    though the construction is not obtained via diagonal derivatives but via the demand $(\X\srplus\mathbf{W})_{s,t}=\X_{s,t}\otimes\mathbf{W}_{s,t}+o(|t-s|)$ and a sewing lemma argument. This gives a strong motivation of looking at smooth rough paths as "universal perturbations" (accordingly to \cite[Section~3.3.1~B]{lyons1998}) of $\gamma$-Hölder weakly geometric rough paths. 
\end{remark}

The canonical sum can now be used to define a \textbf{minimal coupling} in the situation where we have a finite number of smooth geometric rough models $\X^i:[0,T]\to G(\mathbb{R}^{d_i})$. By putting $d=\sum_i d_i$ and fixing a canonical embedding $\mathbb{R}^{d_i}\subset \bR^d$ we can uniquely construct an injective $\otimes$ homomorphisms $\iota_i:T((\R^{d_i}))\to T((\R^d))$ that extends the embedding and sends $G(\bR^{d_i})$ into $ G(\bR^d)$. 
Then we can consider the sgrms $\iota_i\X^i:[0,T]\to G(\R^d)$ and we define the minimal coupling of the $\X^i$ as the canonical sum of the $\iota_i\X^i$
\begin{equation}\label{def:minimal_coupling}
(\X^1,\dots,\X^m)_{\text{min}}:=\iota_1\X^1\srplus\cdots\srplus \iota_m\X^m\,.
\end{equation}
The name minimal amounts to the fact that we are choosing a sgrm which involves the least possible information on mixed iterated integrals. This construction can also a partial generalization of Lyons-Young ``$(p,q)$'' pairing discussed in \cite{frizbook}.

\subsection{Differential equations driven by SGRP}
Let $\left( f_1, \dots , f_{d}\right) \in (\Vect^\infty(\bR^e))^{d}$ be a collection of smooth vector fields, with bounded derivatives of all orders, so that all stated operations and differential equations below make sense. For the empty word $\1$, set $f_{\1}=id$, the identity vector field. For a word $w=\ell_1...\ell_n$ with $|w| \ge 1$ letters $\ell_j \in \{1,...,d \}$, define the vector field
\begin{equation}\label{def_vector_field}
f_w :=f_{\ell_1}\vartriangleright( \ldots \vartriangleright(f_{\ell_{n-1}}\vartriangleright f_{\ell_n})\ldots)\,,
\end{equation}
where $\vartriangleright$ is the following operation of two vector fields: i.e. using Einstein summation over $i,j = 1,...,e$ we set
\begin{equation}\label{defn_pre_lie}
 f \vartriangleright g := f^i (\partial_i g^j) \partial_j \,,
\end{equation}
where $f=(f^i \partial_i),$ and $g = (g^j \partial_j)$. Using tensor calculus notation, we can equivalently set $f \vartriangleright g = (\nabla g) f$ where $\nabla g$ is the Jacobian matrix of $g$. The resulting family of maps $\{y\mapsto f_w(y)\}_{w}$ is represented with a  linear map $f\colon T ( \bR^d ) \to  \Vect^\infty(\bR^e)$ such that
\begin{equation}\label{map_f}
T ( \bR^d )\in  \mathbf{x} \mapsto  f_{\mathbf{x} } (\cdot) = \langle f (\cdot),\mathbf{x} \rangle,
\end{equation}
where the final pairing amounts to view $f$ as 
(formal) sum $\sum f_w e_w$, with summation over all words. Importantly, this map, restricted to Lie polynomials induces a Lie algebra morphism
$$
{{\mathcal L}}(\bR^d ) \ni \mathfrak{u} \mapsto f_\mathfrak{u} \in \Vect^\infty(\bR^e)\,,
$$
which is uniquely determined by his values on the canonical basis of $\mathbb{R}^d$, since $\mathcal{L}(\bR^d)$ is isomorphic to the free Lie algebra over $\mathbb{R}^d$. From an intrinsic geometric point of view, $f_\mathfrak{u}$ is  a vector field over $\mathbb{R}^e$ interpreted as a manifold, with its true for $f_\mathbf{x}$ 
only to the extend that one implicitly uses that flat connection on $\bR^e$.

In what follows, we equip $T(\bR^d)$ with an inner product structure, by declaring orthonormal the basis vectors $e_w \in T(\bR^d)$, induced by distinct words $w$. Viewing $T(\bR^d)$ as subspace of $T((\bR^d))$, this is consistent with the  {readily} used pairing $\langle \mathbf{x}, \mathbf{z} \rangle$ with $ \mathbf{x} \in T((\bR^d))$, $\mathbf{z} \in T(\bR^d)$. In particular $\{ e_w : |w| \le N \}$  yields an orthonormal   basis in the level-$N$ truncated space $T^N(\bR^d)$, with inner product $\langle \cdot, \cdot \rangle = \langle \cdot, \cdot \rangle_N$. We now introduce  a natural notion of differential equation associated to sgrms and a class of vector fields $\left( f_1, \dots ,f_{d}\right)$.
\begin{definition} \label{def:DEsgrp}
Let $\X$ be $N$-sgrm or a good sgrm such that $\dot{\X}_{s,s}\in\cL^N(\mathbb{R}^d)$. We say that a smooth path $Y\colon [0,T]\to \bR^e$  is the solution of a differential equation driven by $\X$ with vector fields $\left( f_1, \dots , f_{d}\right)\in (\Vect^\infty(\bR^e))^{d}$ if it satisfies the differential equation
\begin{equation}\label{eq:RDE}
\dot{Y}_s= \langle f (Y_s), \dot{\X}_{s,s}\rangle_N=\sum_{|w|\le N} f_w (Y_s)\langle \dot{\X}_{s,s},w\rangle\,,
\end{equation}
where $f:\mathbf{x}\mapsto f_{\mathbf{x}}$ is given in \eqref{def_vector_field}. We will refer to equation \eqref{eq:RDE}   with the shorthand notation
\begin{equation}\label{def:RDE}
dY=f(Y)d\mathbf{X}\,.
\end{equation}
\end{definition}
\begin{remark}
We remark that the equation \eqref{def:RDE} generalises the known notion of controlled ODE. Indeed, $\langle \dot{\X}_{s,s},\1\rangle=0$ for any $N$-sgrm or a good sgrm  and if $\X$ is the minimal $N$-extension of a smooth path $X=\sum_{i=1}^d X^i e_i$, obtained by classical iterated integration in Example \ref{sec:example1}, then Definition \ref{def:DEsgrp} collapses to the usual definition of controlled equation
$$
\dot{Y} = \sum_{i=1}^d f_i(Y) \dot{X}^i,
$$
thereby obtaining a consistent definition. However, the richer structure of smooth geometric rough paths in the examples \ref{sec:example2} extends this framework.
\end{remark}

Given a generic initial condition $Y_0\in \bR^e$ and vector fields $\left( f_1, \dots , f_{d}\right)$
on $\bR^e$, 
it follows from standard properties on classical differential equations that there exists a unique solution with initial condition $Y_0$. For a $N$-sgrp $\mathbf{X}$, the notion of solution of differential equation driven by a sgrp is furthermore consistent with the notion of rough differential equation solution à la Davie \cite{Davie08} when the driver $\mathbf{X}$ is interpreted as a $1/N$-Hölder weakly geometric rough path.
\begin{proposition}\label{prop:euler}
A path $Y\colon [0,T]\to \mathbb{R}^e$ is a solution of a differential equation driven by a $N$-sgrp  $\X$ with vector fields $\left( f_1, \dots , f_{d}\right)$ if and only if for all $s,t\in[0,T]$
\begin{equation}\label{eq:euler_RDE}
    Y_t-Y_s=\sum_{1\leq |w|\leq N}f_w(Y_s)\langle \X_{s,t},w\rangle+r_{s,t}\,
\end{equation}
where $\X_{s,t}$ is the geometric rough model associated to $\X_t$ and $r$ is a remainder such that $r_{s,t}= o(|t-s|)$ as $t\to s$.
\end{proposition}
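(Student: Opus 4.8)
The plan is to establish the equivalence by comparing the two characterizations of the solution $Y$ at the level of the "diagonal derivative" versus the "first-order increment expansion." The key observation is that both sides of the claimed equivalence are, in essence, statements about $\dot Y_s$: the left-hand side (Definition \ref{def:DEsgrp}) says $\dot Y_s = \sum_{|w|\le N} f_w(Y_s)\langle\dot\X_{s,s},w\rangle$, while the right-hand side says that the increment $Y_t - Y_s$ admits the first-order Taylor-type expansion \eqref{eq:euler_RDE} with a sublinear remainder. Since $Y$ is smooth (so in particular $C^1$), having such a first-order expansion with $o(|t-s|)$ remainder is equivalent to identifying its derivative with the coefficient of $|t-s|$ in the expansion, and that coefficient is precisely $\sum_{1\le|w|\le N} f_w(Y_s)\langle\dot\X_{s,s},w\rangle$, using that $t\mapsto\langle\X_{s,t},w\rangle$ is smooth with $\langle\X_{s,s},w\rangle=0$ for $|w|\ge1$ and $\partial_t|_{t=s}\langle\X_{s,t},w\rangle = \langle\dot\X_{s,s},w\rangle$.

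Concretely, first I would fix $s$ and expand each term on the right of \eqref{eq:euler_RDE} as $t\downarrow s$: for each word $w$ with $1\le|w|\le N$ we have $\langle\X_{s,t},w\rangle = \langle\dot\X_{s,s},w\rangle(t-s) + o(|t-s|)$ by smoothness of the rough model (condition (b.iii)) together with $\langle\X_{s,s},w\rangle=0$. Since the sum is finite and $f_w(Y_s)$ is a fixed vector in $\bR^e$, this gives
\[
\sum_{1\le|w|\le N} f_w(Y_s)\langle\X_{s,t},w\rangle = \Big(\sum_{1\le|w|\le N} f_w(Y_s)\langle\dot\X_{s,s},w\rangle\Big)(t-s) + o(|t-s|).
\]
Therefore \eqref{eq:euler_RDE} holds (for this $s$, with a sublinear remainder) if and only if $Y_t - Y_s = \big(\sum_{1\le|w|\le N} f_w(Y_s)\langle\dot\X_{s,s},w\rangle\big)(t-s) + o(|t-s|)$, which for a differentiable $Y$ is equivalent to $\dot Y_s = \sum_{1\le|w|\le N} f_w(Y_s)\langle\dot\X_{s,s},w\rangle$; and since $\langle\dot\X_{s,s},\1\rangle=0$ this is exactly \eqref{eq:RDE}. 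Running this equivalence over all $s\in[0,T]$ gives the "only if" direction immediately, and gives the pointwise statement $\dot Y_s = \langle f(Y_s),\dot\X_{s,s}\rangle_N$ for the "if" direction.

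For the "if" direction there is a small subtlety: one must check that the remainder $r_{s,t}$ is genuinely $o(|t-s|)$ as $t\to s$ and not merely pointwise-in-$s$ negligible, and more importantly one must verify that a path $Y$ satisfying \eqref{eq:RDE} is automatically smooth (hence the Taylor expansion above is legitimate). The latter follows from standard ODE theory: \eqref{eq:RDE} is of the form $\dot Y_s = F(s,Y_s)$ with $F(s,y) = \sum_{|w|\le N} f_w(y)\langle\dot\X_{s,s},w\rangle$ smooth in $y$ (the $f_w$ have bounded derivatives of all orders) and smooth in $s$ (since $t\mapsto\langle\X_{s,t},w\rangle$ is smooth, its diagonal derivative is a smooth function of $s$, using e.g. the Chen relation to relate base points), so $Y\in C^\infty$; then $Y_t - Y_s = \int_s^t F(u,Y_u)\,du = F(s,Y_s)(t-s) + o(|t-s|)$ by continuity of $u\mapsto F(u,Y_u)$. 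I expect this regularity bookkeeping — making sure $s\mapsto\dot\X_{s,s}$ is smooth and that the resulting $F$ is regular enough to close the ODE argument — to be the only real point requiring care; the rest is a direct Taylor comparison. One can alternatively invoke Definition \ref{def:DEsgrp} directly, which presupposes $Y$ smooth, so that in both directions of the equivalence $Y$ is already known to be a smooth path and the Taylor argument applies verbatim.
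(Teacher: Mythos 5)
Your argument is correct and takes essentially the same route as the paper: both directions come down to the expansion $\langle\X_{s,t},w\rangle=\langle\dot\X_{s,s},w\rangle(t-s)+o(|t-s|)$, compared in one direction with the difference quotient of $Y$ (yielding differentiability and \eqref{eq:RDE}) and in the other with Taylor's formula for $Y$, together with the same closing observation that smoothness of $f$ and of $s\mapsto\dot\X_{s,s}$ upgrades differentiability of $Y$ to smoothness. The regularity bookkeeping you flag is precisely what the paper handles in its parenthetical remark, so nothing is missing.
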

\begin{proof}
 Supposing the property \eqref{eq:euler_RDE}, we first observe, since $f_{\mathbf{1}}=0$ and $\langle\X_{s,s},w\rangle=0$ for any non-empty word $w$,
 \begin{equation*}
     \lim_{t\downarrow s}\frac{Y_t-Y_s}{t-s}=\lim_{t\downarrow s}\sum_{1\leq |w|\leq N}f_w(Y_s)\big\langle \X_s^{-1}\otimes_N\frac{\X_t-\X_s}{t-s},w\big\rangle=\sum_{1\leq |w|\leq N}f_w(Y_s)\langle \dot\X_{s,s},w\rangle
 \end{equation*}
 We thus conclude that $Y$ is differentiable and obtain \eqref{eq:RDE}. 
 (With $f$ and $s \mapsto \dot{\X}_{s,s}$ smooth, it is then clear that $Y$ is not only differentiable but in fact smooth.)
Conversely, supposing that $Y$ is a solution of \eqref{eq:RDE}, we apply  Taylor's formula  to $Y$ and $t\to \langle\X_{s,t}, w\rangle$ for any $|w|\leq N$ obtaining for all $s,t\in [0,T]$.
\begin{equation*}
\begin{split}
    &Y_t-Y_s=\sum_{1\leq |w|\leq N}f_w(Y_s)\langle \dot{X}_{s,s},w\rangle(t-s)+r'_{s,t}=\sum_{1\leq |w|\leq N}f_w(Y_s)\langle X_{s,t},w\rangle+r''_{s,}
    \end{split}
\end{equation*}
for some $r'_{s,t}$, $r''_{s,t}$ satisfying both  $r'_{s,t}\,,r''_{s,t}= o(|t-s|)$ as $t\to s$.
\end{proof}

Using the properties of the diagonal derivative, we can restate the identity \eqref{eq:RDE} with respect to the Lie algebra ${{\mathcal L}}^N(\bR^d)$.

\begin{lemma}\label{lem:ONB}
Let $\X$ be $N$-sgrm or a good sgrm such that $\dot{\X}_{s,s}\in\cL^N(\mathbb{R}^d)$ and consider  $\mathfrak{B}^N$ an orthonormal basis for $\cL^N(\bR^d) \subset T^N(\bR^d)$. Then we have the equivalence
$$
dY = f (Y) d \X \qquad  \text{ iff }\qquad\dot{Y}_s  = \sum_{\mathfrak{u} \in  \mathfrak{B}^N} f_\mathfrak{u}  (Y_s)\langle  \dot{\X}_{s,s},\mathfrak{u} \rangle\,.
$$

\end{lemma}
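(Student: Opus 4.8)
The plan is to recognize that both sides of the asserted equivalence are merely two coordinate expressions of the single quantity $f_{\dot{\X}_{s,s}}(Y_s)$, written out in two different orthonormal bases of the same space, and that the passage from one to the other is nothing but linearity of $\mathbf{x}\mapsto f_{\mathbf{x}}$ together with the fact that $\dot{\X}_{s,s}$ always lies in the subspace $\cL^N(\bR^d)$.

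Concretely, I would first recall from \eqref{map_f} that for every fixed $y\in\bR^e$ the assignment $\mathbf{x}\mapsto f_{\mathbf{x}}(y)$ is linear on $T^N(\bR^d)$, with $f_{\mathbf{x}}(y)=\sum_{|w|\le N}f_w(y)\langle\mathbf{x},w\rangle$; in the standard orthonormal basis $\{e_w:|w|\le N\}$ of $T^N(\bR^d)$ this is exactly $\langle f(y),\mathbf{x}\rangle_N$. Hence Definition \ref{def:DEsgrp} says precisely that $\dot Y_s=f_{\dot{\X}_{s,s}}(Y_s)$, where by Proposition \ref{prop:lie1} (for an $N$-sgrm) respectively by hypothesis (for a good sgrm) one has $\dot{\X}_{s,s}\in\cL^N(\bR^d)\subset T^N(\bR^d)$ for all $s\in[0,T]$.

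Next I would expand $\dot{\X}_{s,s}$ in the orthonormal basis $\mathfrak{B}^N$ of the subspace $\cL^N(\bR^d)$. Since $\dot{\X}_{s,s}$ belongs to this subspace and $\mathfrak{B}^N$ is orthonormal for the restriction of $\langle\cdot,\cdot\rangle_N$, one has $\dot{\X}_{s,s}=\sum_{\mathfrak{u}\in\mathfrak{B}^N}\langle\dot{\X}_{s,s},\mathfrak{u}\rangle\,\mathfrak{u}$, where the pairing $\langle\dot{\X}_{s,s},\mathfrak{u}\rangle$ coincides with $\langle\cdot,\cdot\rangle_N$ because $\mathfrak{u}\in T^N(\bR^d)$ is a tensor polynomial. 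Applying the linear map $\mathbf{x}\mapsto f_{\mathbf{x}}(Y_s)$ to this expansion gives $f_{\dot{\X}_{s,s}}(Y_s)=\sum_{\mathfrak{u}\in\mathfrak{B}^N}\langle\dot{\X}_{s,s},\mathfrak{u}\rangle f_{\mathfrak{u}}(Y_s)$, and combining this with the reformulation of Definition \ref{def:DEsgrp} from the previous paragraph yields the claimed equivalence.

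There is no genuine obstacle: the proof is a one-line consequence of linearity once the setup is unwound. The only point requiring a little care is the bookkeeping between the pairing $\langle\cdot,\cdot\rangle$, the level-$N$ inner product $\langle\cdot,\cdot\rangle_N$, and the basis expansion in $\mathfrak{B}^N$ — and, crucially, that this expansion is legitimate precisely because the diagonal-derivative Proposition \ref{prop:lie1} (or the standing hypothesis in the good-sgrm case) guarantees $\dot{\X}_{s,s}$ never leaves $\cL^N(\bR^d)$.
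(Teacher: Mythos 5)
Your proof is correct and takes essentially the same route as the paper's: both arguments hinge on $\dot{\X}_{s,s}\in\cL^N(\bR^d)$ (Proposition \ref{prop:lie1}, respectively the standing hypothesis for a good sgrm) and then conclude by pure linear algebra. The only cosmetic difference is that the paper completes $\mathfrak{B}^N$ to an orthonormal basis of $T^N(\bR^d)$ and observes that the $\mathfrak{B}^\perp$-terms drop out, whereas you expand $\dot{\X}_{s,s}$ directly in $\mathfrak{B}^N$ and use linearity of $\mathbf{x}\mapsto f_{\mathbf{x}}(Y_s)$ --- two phrasings of the same argument.
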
 
\begin{proof}
Complete $\mathfrak{B}=\mathfrak{B}^N$ to an orthonormal  basis $\bar{ \mathfrak{B}} = \mathfrak{B} \cup \mathfrak{B}^\perp$ of $T^N(\bR^d)$. In that basis
$$
\dot{Y}_s = \langle f (Y_s), \dot{\X}_{s,s}\rangle_N =\sum_{\mathbf{x} \in  \bar{ \mathfrak{B}}}   f_\mathbf{x}  (Y_s)\langle \dot{\X}_{s,s},\mathbf{x}    \rangle.
$$
We now observe that $\dot{\X}_{s,s} \in \cL^N(\bR^d)$, thanks to Proposition \ref{prop:lie1}. Consequently, there is no contribution from any $\mathbf{x} \in \mathfrak{B}^\perp \subset \cL^N(\bR^d)^\perp$.
\end{proof}
\begin{example}
In case $N=2$ a natural orthonormal basis for $\mathcal{L}^2(\mathbb{R}^d)$ is given by
\[\{ i, \; \frac{1}{\sqrt{2}}[i,j]\colon i,j \in \{1, \cdots,d \}, \; i<j\}\]
and the previous lemma asserts that $dY = f (Y) d \X$ is equivalent to the following higher order controlled equation
\[
\begin{split}
\dot{Y}_s &= \sum_{i=1}^d   f_i  (Y_s)\langle \dot{\X}_{s,s},i\rangle+  \sum_{i<j}  \frac{1}{2} f_{[i,j]} (Y_s)\langle \dot{\X}_{s,s},[i,j]\rangle\\&= \sum_{i=1}^d   f_i  (Y_s)\langle \dot{\X}_{s,s},i\rangle+  \sum_{i<j}  \frac{1}{2} [f_i, f_j]  (Y_s)\langle \dot{\X}_{s,s},[i,j]\rangle\,.
\end{split} 
\]
This also follows directly by applying Remark \ref{rk_N2} to the equation \eqref{eq:RDE} with $N=2$.
\end{example} 
\begin{remark} \label{rmk:RosaNoGo}
Even if a differential equation driven by a $N$-sgrm can be viewed as a Davie solution, we note however that Lemma \ref{lem:ONB} is a particular property of differential equations driven by $N$-smooth geometric rough paths which does not hold for RDEs driven by a generic  $1/N$-Hölder weakly geometric rough path.

Indeed, working with  one-dimensional $1/2$-Hölder weakly geometric rough path\footnote{Even though $d=1$ weakly geometric rough paths are an outlier in the sense that they are fully determined by the underlying path, they can be trivially embedded into rough paths spaces of higher dimension and thus such a counterexample also applies there.} and a generic vector field $f\in \Vect^\infty(\bR)$ one as the trivial identities
\begin{align*}
&\sum_{1\leq|w|\leq 2}f_w(Y_s)\langle\X_{s,t},w\rangle=f(Y_s)(X_t-X_s)+(f\cdot f')(Y_s)\frac{(X_t-X_s)^2}{2}\,,\\&
\sum_{\mathfrak{u} \in \mathfrak{B}^2}   f_\mathfrak{u}  (Y_s)\langle     \X_{s,t},\mathfrak{u}\rangle =f(Y_s)(X_t-X_s)\,.
\end{align*}
Therefore there is no $r_{s,t}= o(|t-s|)$ such that
\[\sum_{|w|\leq 2}f_w(Y_s)\langle\X_{s,t},w\rangle= \sum_{\mathfrak{u} \in \mathfrak{B}^2}   f_\mathfrak{u}  (Y_s)\langle     \X_{s,t},\mathfrak{u}\rangle+ r_{s,t}\,.\]
\end{remark}

\subsection{Algebraic renormalization of SGRP}\label{sec:alg_ren_geo}

 Consider a collection of Lie series $v = (v_1,\ldots, v_d)\subset \cL ((\bR^{d}))$, and let $T_v$ be the $\otimes$-endomorphism on $T((\bR^{d}))$, obtained by {extending the translation map}  $e_i \mapsto e_i + v_i$. We call $T_v$ the \textbf{translation map}. In case $v_i$ are all Lie polynomials, we remark we that  $T_v$ maps $T^{N}(\R^d)$ to $T^{M}(\R^d)$ with $M=N\cdot N'$ where $N'$ denotes the smallest integer such that $v_i\in\mathcal{L}^{N'}(\R^d)$. By restriction to Lie series, $\mathcal{L} ((\bR^d))\subset T((\bR^{d}))$, we can and will view   ${T}_v$ also as Lie algebra endomorphism, still denoted by $T_v$. We first give a dynamic view on the higher-order translation of sgrp.

\begin{theorem} \label{thm:DynRen}
\begin{enumerate}
\item[(i)] Let $v=(v_1,\dots,v_d)$ be a collection of elements in $\mathcal{L}((\R^d))$. Given a sgrp $\X$ over
$\bR^{d}$, 
the unique solution to 
\begin{equation}\label{eq:transl_ODE}
       \dot \Z_t = \Z_t \otimes  (T_{v} \dot{\X}_{t,t} )\, , \qquad \Z_0 = T_v \X_0 
\end{equation}
takes values in $G(\mathbb{R}^d)$ and is again a sgrp, and we have the explicit form
\begin{equation}\label{eq:expl_identities}
\Z_t =  {T}_v (\X_t)\,, \quad \Z_{s,t} =  {T}_v (\X_{s,t}).
\end{equation}
Starting with a sgrm $\X$ over $\R^d$, the above applies to $t\mapsto \X_{0,t}$ and we obtain a sgrm $\Z$  with explicit form given by
$$ \Z_{s,t} =  {T}_v (\X_{s,t}).$$

\item[(ii)] Let now $v=(v_1,\dots,v_d)$ be a collection of elements in $\mathcal{L}(\R^d)$ and $\X$ be a good sgrm over $\R^d$. Then $\Z$ as constructed in (i) is also a good sgrm. More specifically, let $N'$ be the smallest  integer such that $v_i\in\mathcal{L}^{N'}(\R^d)$ for all $i$ and assume $\X$ is the minimal extension of some $N$-sgrm $\Y$ over $\bR^{d}$. Let $M = N\cdot N'$. The unique solution to 
$$
       \dot \W_t = \W_t \otimes_{M}  (T_{v} \dot{\Y}_{t,t} ) , \qquad \W_0 = 1, 
$$
defines a $M$-sgrp, given by $\W_{s,t} = \W_s^{-1}\otimes_M \W_t$, which we call $\mathcal{T}_v[\Y]$.
Moreover, we have the explicit form
\begin{equation}\label{eq:not_local_transl}
\mathcal{T}_v[\Y]_{s,t}
=  {T}_v^{M} (\Y_{s,t}^{M})
=\proj_{M}{T}_v(\X_{s,t})
\end{equation}
with algebra endomorphism $T_v^M := \proj_{M} T_v\mathfrak{i}^M$ of $(T^M(\R^d),\otimes_M)$, using the (linear) embedding $\mathfrak{i}^M:\,T^M(\R^d)\to T((\R^d))$, and $\Y^M=\mathrm{MinExt}^{M}(\Y)$.
\end{enumerate}
\end{theorem}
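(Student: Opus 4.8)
The argument rests on one structural fact, which I would establish first: $T_v$ is a continuous $\otimes$-algebra endomorphism of $T((\bR^d))$ that restricts to a Lie algebra endomorphism of $\cL((\bR^d))$. This is elementary: since every $v_i \in \cL((\bR^d))$ has no degree-$0$ component, the image $T_v(e_w)$ of a word of length $k$ lies in degrees $\ge k$, so $T_v$ is well defined and continuous on $T((\bR^d))$; as it sends the generators $e_i$ to Lie elements it preserves $\cL((\bR^d))$, hence commutes with $\exp_\otimes$ and maps $G(\bR^d)$ into $G(\bR^d)$. If moreover $v_i \in \cL^{N'}(\bR^d)$, the same degree count gives $T_v(\cL^N(\bR^d)) \subseteq \cL^M(\bR^d)$ with $M = N N'$, and shows that $\proj_M \circ T_v$ annihilates $T^{>M}((\bR^d))$. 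The plan is then: prove (i) by checking that $T_v(\X_t)$ solves \eqref{eq:transl_ODE}; prove (ii) by showing that the level-$M$ projection of the development $\Z$ from (i) solves the equation for $\W$, and rewriting that projection in closed form.

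For (i), recall that $\dot\X_{t,t}\in\cL((\bR^d))$ by Proposition \ref{prop:lie1} (for a sgrp via its induced sgrm) and that $\dot\X_t = \X_t\otimes\dot\X_{t,t}$. Applying the linear, continuous, multiplicative map $T_v$ (which commutes with $d/dt$) shows that $\Z_t := T_v(\X_t)$ satisfies $\dot\Z_t = \Z_t\otimes(T_v\dot\X_{t,t})$ with initial value $T_v(\X_0)$. Since $T_v\dot\X_{t,t}$ is again $\cL((\bR^d))$-valued, \eqref{eq:transl_ODE} is a Cartan development, whose unique $G(\bR^d)$-valued solution is obtained, exactly as in the proof of Theorem \ref{fundthmSGRP}, by solving in each $G^{N'}(\bR^d)$ and passing to the projective limit; by uniqueness $\Z_t = T_v(\X_t)$, a smooth $G(\bR^d)$-valued path, hence a sgrp. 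Then $\Z_{s,t} = \Z_s^{-1}\otimes\Z_t = T_v(\X_s^{-1}\otimes\X_t) = T_v(\X_{s,t})$ because $T_v$ is an algebra homomorphism; for a sgrm $\X$ one runs this for $t\mapsto\X_{0,t}$ and uses Chen's relation $\X_{0,s}^{-1}\otimes\X_{0,t}=\X_{s,t}$.

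For (ii), write $\X_t := \X_{0,t}$ and $\Z_t := T_v(\X_t)$ as in (i). Goodness of $\Z$ follows from Lemma \ref{lem:good}: $\dot\Z_{s,s} = T_v(\dot\X_{s,s}) = T_v(\dot\Y_{s,s}) \in T_v(\cL^N(\bR^d)) \subseteq \cL^M(\bR^d) \subset \cL(\bR^d)$, using that the minimal extension has the same diagonal derivative as $\Y$ (Theorem \ref{fundthmSGRP}). The equation for $\W$ is a Cartan development inside the finite-dimensional group $G^M(\bR^d)$, since $T_v\dot\Y_{t,t}\in\cL^M(\bR^d)$; it has a unique global $G^M(\bR^d)$-valued solution, so $\W$ is an $M$-sgrp (equal to $\mathcal{T}_v[\Y]$ by definition). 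Now $\proj_M$ is an algebra homomorphism onto $(T^M(\bR^d),\otimes_M)$, so $\proj_M(\Z)$ is a smooth $G^M(\bR^d)$-valued path with $\proj_M(\Z_0)=\1$ and diagonal derivative $\proj_M(\dot\Z_{t,t}) = \proj_M(T_v\dot\Y_{t,t}) = T_v\dot\Y_{t,t}$, the last step because $T_v\dot\Y_{t,t}\in\cL^M(\bR^d)\subset T^M(\bR^d)$. By uniqueness in $G^M(\bR^d)$, $\W_t = \proj_M(\Z_t)$, hence $\W_{s,t} = \proj_M(\Z_{s,t}) = \proj_M T_v(\X_{s,t})$ by (i). Finally, since $\proj_M T_v$ kills $T^{>M}((\bR^d))$ one has $\proj_M T_v = \proj_M T_v\,\mathfrak{i}^M\proj_M$, whence $\proj_M T_v(\X_{s,t}) = \proj_M T_v\,\mathfrak{i}^M(\proj_M\X_{s,t}) = T_v^M(\Y^M_{s,t})$, using $\Y^M_{s,t} = \mathrm{MinExt}^M(\Y)_{s,t} = \proj_M(\X_{s,t})$; the same annihilation property shows $T_v^M$ is an algebra endomorphism of $(T^M(\bR^d),\otimes_M)$, which is exactly \eqref{eq:not_local_transl}.

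The step requiring the most care is the truncation bookkeeping in (ii): one must be sure that $\proj_M$ of the infinite-dimensional development $\Z$ genuinely solves the level-$M$ equation for $\W$, which uses simultaneously that $\proj_M$ intertwines $\otimes$ and $\otimes_M$, that it commutes with forming the diagonal derivative of the increments, and—crucially—the degree bound $T_v(\cL^N(\bR^d)) \subseteq \cL^M(\bR^d) \subseteq T^M(\bR^d)$, which makes $\proj_M$ act as the identity on the driver $T_v\dot\Y_{t,t}$. The complementary form of this bound, that $\proj_M T_v$ annihilates $T^{>M}((\bR^d))$, is precisely what lets $\proj_M T_v(\X_{s,t})$ be rewritten as the ``non-local'' expression $T_v^M(\Y^M_{s,t})$, which a priori differs from the naive level-$N$ translation $T_v^M(\Y_{s,t})$ and records the higher iterated integrals of $\Y$ through its minimal extension.
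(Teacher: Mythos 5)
Your proposal is correct and follows essentially the same route as the paper: part (i) is the same observation that $T_v$ is a (continuous) $\otimes$- and Lie-algebra endomorphism commuting with $d/dt$, so $T_v(\X_t)$ solves \eqref{eq:transl_ODE} and is identified with $\Z$ by uniqueness of the Cartan development, and part (ii) rests on the same two pillars as the paper's argument, namely that $T_v$ does not lower the grade (so $\proj_M T_v$ annihilates $T^{>M}((\R^d))$, equivalently $T_v^M$ is an $\otimes_M$-endomorphism) and uniqueness of the development in $G^M(\R^d)$. The only difference is cosmetic bookkeeping: you project the full translated development $T_v(\X)$ to level $M$ and match it with $\W$, whereas the paper applies $T_v^M$ to the truncated development $\Y^M$ and matches that with $\W$, the identity $\proj_M T_v(\X_{s,t})=T_v^M(\Y^M_{s,t})$ closing the gap in either direction.
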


\begin{remark}
Note that the renormalization map $\mathcal{T}_v$ for an $N$-sgrm is neither a linear nor a pointwise map, in contrast to $\X \to T_v\X$ for sgrps or sgrms.
\end{remark}
\begin{proof}
\emph{(i)} Fix a sgrp $\X$ and write 
$$
     \dot \X_t = \X_t \otimes (\X^{-1}_t\otimes \dot{\X}_t)= \X_t \otimes \dot{\X}_{t,t}. 
$$
Since $T_v$  commutes with derivation and is an algebra morphism,  then $\Z_t = T_v (\X_t)$  clearly satisfies the differential equation \eqref{eq:transl_ODE}. The algebraic properties of $T_v$ together with Proposition \ref{prop:lie1} imply  that $T_v(\X_t)$ belongs to $G(\mathbb{R}^d)$  and $T_v(\dot{\X}_{t,t})$ belongs to $ \mathcal{L}((\mathbb{R}^d))$ respectively. Therefore $T_v(\X_t)$ must coincide with the Cartan development of $T_v(\dot{\X}_{t,t})$, thereby  yielding  \eqref{eq:expl_identities}. Similar results hold with a sgrm. 
\medskip

\emph{(ii)} Fix a good sgrm and apply the part \emph{(i)} to the path $t\mapsto \X_{0,t} $. Since $v$ contains only Lie polynomials the map $T_v$ becomes an algebra morphism $T_v\colon T(\mathbb{R^d})\to T(\mathbb{R^d})$. Using  Lemma \ref{lem:good} and this last property one has $T_v(\dot{\X}_{t,t})\in \mathcal{L}(\mathbb{R}^d)$, which implies that $T_v(\X_{s,t})$ is a good sgrm. In the specific case of a collections of elements belonging to $\cL^{N'}(\mathbb{R}^d)$, we consider $\Y^M=\mathrm{MinExt}^{M}(\Y)$. The path $t\mapsto \Y^M_{0,t}=\Z_t $ will satisfy the following equation
\begin{equation}\label{eq:proof_transl}
\dot \Z_t = \Z_t \otimes_{M}  \dot{\Y}^M_{t,t}  , \qquad \Z_0 = \1, 
\end{equation}
Using stardard properties of tensors, $T_v^M$ is an algebra endomorphism of $(T^M(\R^d),\otimes_M)$. Indeed for any $x,y\in T^M(\R^d)$
\begin{align*}
    (T_v^M x)\otimes_M(T_v^M y)&=\proj_M((T_v^M x)\otimes (T_v^M y))=\proj_M((T_v x)\otimes (T_v y))\\&=\proj_M T_v(x\otimes y)
    =\proj_M T_v \mathfrak{i}^M \proj_M(x\otimes y)=T_v^M (x\otimes_M y),
\end{align*}
where we used in the fourth equality the fact that $T_v z\in J^{>M}$ for all $z\in J^{>M}$ ($T_v$ is not lowering the grade). Applying $T^M_v$ to both sides of equation \eqref{eq:proof_transl}, we obtain that $T^M_v(\Z_t)$ coincides with the Cartan development in $G^M(\mathbb{R}^d)$ of $T_v^M \dot{\Y}^M$.  By uniqueness of this  we can express $\mathcal{T}_v[\Y]_{s,t}$ as $T^M_v(\Z_s)^{-1}\otimes_MT^M_v(\Z_t)= {T}_v^{M} (\Y_{s,t}^{M})$, obtaining \eqref{eq:not_local_transl}. The identity ${T}_v^{M} (\Y_{s,t}^{M}) =\proj_{M}{T}_v(\X_{s,t})$ follows trivially.
\end{proof}

Combining the properties of translation operators $T_v$ with the differential equation \eqref{def:RDE}, we can  describe the effect  of translation on smooth geometric rough paths in the same way as \cite{Bruned2019}. Given $v=(v_1,\dots,v_d)$ a collection of elements in $\mathcal{L}(\R^d)$ and $(f_1\,, \cdots\,, f_d)\in (\Vect^\infty(\bR^e))^{d}$ we consider the collection of vector fields $ (f^v_1,\ldots, f^v_d)\in (\Vect^\infty(\bR^e))^{d}$ given for each $i=1, \cdots, d$ by 
\[
f^v_i= f_{i} + f_{v_i}\,,
\]
where $f_{v_i}$ was given in  \eqref{map_f}. Starting from this collection of translated vector fields on $d$ directions, we can consider the linear map $f^v\colon T ( \bR^d ) \to  \Vect^\infty(\bR^e)$, extending the family $(f^v_1,\ldots, f^v_d)$ on any word like in \eqref{def_vector_field}. This operation allows to transfer the action of the translation at the level of differential equations.
\begin{theorem}\label{thm:renormalisation_RDE}
Let  $\X$ be a good sgrm and $\W$ a $N$-sgrm. For any given collection $v = (v_1,\ldots, v_d)$ of elements in $\cL(\mathbb{R}^d)$ a path $Y\colon[0,T]\to\mathbb{R}^e $ solves one of the equation
$$
dY = f(Y) d (T_v(\X))\,,  \quad dY = f (Y) d (\mathcal{T}_v[\W])\,;
$$
if and only if it solves respectively
$$
dY = f^{v} (Y) d \X\,,  \quad dY = f^{v} (Y) d\W\,.
$$
\end{theorem}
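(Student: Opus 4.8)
The plan is to reduce the statement to the two defining differential equations for rough-path-driven ODEs (Definition \ref{def:DEsgrp}) and then manipulate the right-hand side using the translation identities from Theorem \ref{thm:DynRen} together with the multiplicativity of the map $f$. The two cases (the pointwise/algebra translation $T_v$ acting on a good sgrm $\X$, and the nonlinear renormalization $\mathcal{T}_v$ acting on an $N$-sgrm $\W$) will be treated in parallel, since by \eqref{eq:not_local_transl} the diagonal derivative of $\mathcal{T}_v[\W]$ coincides (up to truncation, which is harmless because the vector fields only see words of length $\le N$ resp.\ $\le M$) with $T_v$ applied to the diagonal derivative of the minimal extension of $\W$.

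First I would write out, using Definition \ref{def:DEsgrp}, that $Y$ solves $dY = f(Y)\,d(T_v \X)$ iff $\dot Y_s = \langle f(Y_s), \dot{\mathbf Z}_{s,s}\rangle$ where $\mathbf Z = T_v \X$. By Theorem \ref{thm:DynRen}(i), $\dot{\mathbf Z}_{s,s} = T_v(\dot{\mathbf X}_{s,s})$, and since $\X$ is a good sgrm, $\dot{\mathbf X}_{s,s}\in\cL(\bR^d)$. So the task reduces to the purely algebraic identity
\begin{equation}\label{eq:alg_transl_vf}
\langle f(y), T_v \mathfrak{u}\rangle = \langle f^v(y), \mathfrak{u}\rangle \qquad \text{for all } \mathfrak u \in \cL(\bR^d),\ y\in\bR^e.
\end{equation}
This is the key algebraic step: both sides are values of Lie algebra morphisms $\cL(\bR^d)\to\Vect^\infty(\bR^e)$ (as recalled after \eqref{map_f}, and using that $T_v$ is a Lie algebra endomorphism), so by freeness of $\cL(\bR^d)$ it suffices to check \eqref{eq:alg_transl_vf} on the generators $e_i$, where it reads $f_{e_i + v_i} = f_i + f_{v_i} = f^v_i$, which holds by linearity of $f$ and the definition of $f^v_i$. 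One has to take a small care that $\langle f(\cdot),\cdot\rangle$ restricted to Lie elements really is the Lie morphism $\mathfrak u \mapsto f_{\mathfrak u}$, which is exactly the content of the remarks around \eqref{map_f}.

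For the $\mathcal{T}_v[\W]$ case I would argue identically: by \eqref{eq:not_local_transl}, $\mathcal{T}_v[\W]_{s,t} = \proj_M T_v(\X_{s,t})$ with $\X = \mathrm{MinExt}(\W)$, hence $\dot{(\mathcal{T}_v[\W])}_{s,s} = \proj_M T_v(\dot{\mathbf X}_{s,s})$, and pairing against $f$ (which only involves words of length $\le M$) the projection $\proj_M$ is invisible, so again $\langle f(Y_s), \dot{(\mathcal{T}_v[\W])}_{s,s}\rangle = \langle f(Y_s), T_v \dot{\mathbf X}_{s,s}\rangle = \langle f^v(Y_s), \dot{\mathbf X}_{s,s}\rangle$; and since $\W$ and $\X$ have the same diagonal derivative $\dot{\mathbf W}_{s,s} = \dot{\mathbf X}_{s,s}\in\cL^N(\bR^d)$, this equals $\langle f^v(Y_s),\dot{\mathbf W}_{s,s}\rangle$, i.e.\ $dY = f^v(Y)\,d\W$. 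The equivalences are genuine iff-statements because in all cases the ODE for $\dot Y_s$ has a unique global solution for given initial datum (linear growth / standard ODE theory, as already noted after Definition \ref{def:DEsgrp}), so matching right-hand sides for the same $Y_0$ forces the solutions to coincide.

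The main obstacle is not depth but bookkeeping: making sure the truncation levels line up (the vector field map $f$ sees only $|w|\le N$, while $T_v$ may raise degrees up to $M = N\cdot N'$, so one must be careful that no relevant word is lost and that, conversely, the extra high-degree words produced by $T_v$ are never paired against $f$), and making the identification of $\langle f(\cdot),\cdot\rangle|_{\cL}$ with the Lie morphism $f_{\mathfrak u}$ precise enough that "check on generators" is legitimate. Once \eqref{eq:alg_transl_vf} is in hand, both equivalences follow by substituting into Definition \ref{def:DEsgrp} and invoking uniqueness of ODE solutions.
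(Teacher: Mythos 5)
Your proposal is correct and follows essentially the same route as the paper's own (first) argument: reduce via Theorem \ref{thm:DynRen} and Definition \ref{def:DEsgrp} to an identity of diagonal derivatives, then observe that $\mathfrak{u}\mapsto f_{T_v\mathfrak{u}}$ and $\mathfrak{u}\mapsto f^v_{\mathfrak{u}}$ are Lie algebra morphisms on $\cL(\R^d)$ agreeing on the generators $e_i$, hence equal by freeness, with the truncation levels handled exactly as you indicate. (The paper also records a second, word-level argument via the Davie expansion of Proposition \ref{prop:euler}, which extends beyond the smooth setting, but it is not needed for the statement as given.)
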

\begin{remark}
The differential equations in the above statement are understood, in the sense of Definition \ref{def:DEsgrp}, as equations driven by a $N$-sgrm $\W$, a $(N\cdot N')$-sgrm $\mathcal{T}_v[\W]$ (when the directions $v = (v_1,\ldots, v_d)$ are all contained in $\cL^{N'}(\mathbb{R}^d)$) and  a good sgrm $T_v \X$, respectively.
\end{remark}
\begin{proof}
Writing a $\X$ as the minimal extension of a $M$-sgrm for some $M\in \mathbb{N}$, the result follows by showing only the equivalence for $\W$. We present here  two possible proofs.

(First argument)
Fix $s$ and $y= Y_s$ and $\mathfrak{w} := \dot{\W}_{s,s} \in \cL^N(\mathbb{R}^d)$ and  $M=N\cdot N'$, where $N'$ is the minimal integer such that $v_i\in\mathcal{L}^{N'}(\bR^{d})$ for all $i=1\,, \cdots\,, d$. By Theorem \ref{thm:DynRen}, the diagonal derivative of $\mathcal{T}_v[\W]$ at time $s$ is precisely $T_v \mathfrak{w} \in \cL^M(\bR^{d})$ up to minimal extension. Using Lemma \ref{lem:ONB}, the result follows once we check that
$$
\sum_{\mathfrak{u}\in   \mathfrak{B}^M} f_\mathfrak{u}  (y )\langle   T_v \mathfrak{w},\mathfrak{u}  \rangle = \sum_{\mathfrak{v} \in  \mathfrak{B}^N} f^v_\mathfrak{v}  (y)\langle   \mathfrak{w},\mathfrak{v}\rangle\,,
$$
To prove this identity, we use an argument already contained in \cite{Bruned2019}. Let  $f : \mathfrak{x} \mapsto f_{\mathfrak{x}}$ be the induced Lie algebra morphism from $\cL^M(\bR^{d})$ into $\Vect^\infty(\bR^e)$. Hence, whenever $f$ is smooth, the maps $f_{T_v}$ and $f^v$ are both Lie algebra morphisms from $\cL^M(\bR^{d})$ into $\Vect^{\infty}(\bR^e)$, which furthermore agree on the generators $e_i$. Thus $f_{T_v\mathfrak{x}}(z) = f^v_\mathfrak{x}(z)$ for any $z\in \mathbb{R}^e$ and any $\mathfrak{x}\in \cL^M(\bR^{d})$ thanks to the universal property of $\cL^M(\bR^{d})$  so
$$
\sum_{\mathfrak{u} \in \mathfrak{B}^M}f_{\mathfrak{u}} (y )\langle T_v \mathfrak{w},\mathfrak{u}  \rangle =    f_{T_v\mathfrak{w}}(y)   = f^v_{\mathfrak{w}}(y) =\sum_{\mathfrak{v}\in  \mathfrak{B}^N} f^v_\mathfrak{v} (y)\langle   \mathfrak{w},\mathfrak{v}\rangle\,.
$$

(Second argument) The identity $f_{T_v}(z)=f^{v}(z)$ does not hold simply over $\cL(\bR^{d})$ but over all $ T(\mathbb{R}^d)$. For that, we first show that in fact for any $u\in\mathcal{L}(\R^d)$ and any $x\in T^{>0}(\R^d)$ we have $f_u\vartriangleright f_x=f_{u\otimes x}$. We do so completely analogously to \cite[Lemma~2.5.11]{Preiss21},
proceeding by induction over the length of $u$. 
 For $u$ a letter, the statement holds by definition of $w\mapsto f_w$.
 Assume the statement holds for all $u$ of length $n$. 
 Then, to check it for all homogeneous Lie elements of word length $n+1$, 
 since left bracketings span the free Lie algebra,
 it suffices to look at Lie elements of the form $[u,i]$. 
 So,
 \begin{align*}
  f_{[u,i]\otimes x}
  &=f_{u\otimes i\otimes x}-f_{i\otimes u\otimes x}
  =f_u\vartriangleright(f_{i}\vartriangleright f_x)-f_i\vartriangleright(f_u\vartriangleright f_x)\\
  &=[f_u,f_i]_{\vartriangleright}\vartriangleright f_x
  =f_{[u,i]}\vartriangleright f_x,
 \end{align*}
 where for the third equality, we used the pre-Lie identity.
Then, since $v_i\in\mathcal{L}(\R^d)$, indeed we get, again via induction over the length of words,
\begin{equation*}
f_{iw}^v=f_i^v\vartriangleright f_{w}^v =f_{T_v i}\vartriangleright f_{T_v w} 
=f_{v_i}\vartriangleright f_{T_v w}=f_{v_i\otimes T_v(w)}  =f_{T_v (iw)}\,.
\end{equation*}
We finally obtain the general identity by linearity. Thus for any $\mathbf{x} \in T^N(\mathbb{R}^d)$ we have
\begin{equation*}
\sum_{w}\langle \mathbf{x},w\rangle f_w^v 
=f_\mathbf{x}^v=f_{T_v \mathbf{x}}= \sum_{w}\langle T_v \mathbf{x},w\rangle f_w\,.
\end{equation*}
Then, by Proposition \ref{prop:euler} we have the desired equivalence of differential equations through the following equality, for any smooth path $Y:[0,T]\to\mathbb{R}^e$ and $s,t\in [0,T]$, using the notation $\mathbf{W}^\infty:=\mathrm{MinExt}(\mathbf{W})$ one has
\begin{align*}
&Y_t-Y_s=\sum_{1\leq |w| \leq N} \scal{\mathbf{W}_{s,t},w} f^v_{w}(Y_s)
 =\sum_{1\leq |w|} \scal{\proj_{ N }\mathbf{W}^\infty_{s,t},w} f^v_{w}(Y_s)\\&=\sum_{1\leq |w|} \scal{T_v\proj_{N}\mathbf{W}^\infty_{s,t},w} f_{w}(Y_s)=\sum_{1\leq |w|} \scal{\proj_{ M 
 }T_v\mathbf{W}^\infty_{s,t},w}f_{w}(Y_s)+R_{s,t}\\&=\sum_{1\leq |w|\leq M 
 }\scal{(\mathcal{T}_v[\mathbf{W}])_{s,t},w} f_{w}(Y_s)+R_{s,t},
\end{align*}
with
\begin{align*}
 R_{s,t}
 &=\sum_{1\leq|w|\leq M 
 }\scal{T_v\proj_{ N 
 }\mathbf{W}^\infty_{s,t}-T_v\mathbf{W}^\infty_{s,t},w}f_{w}(Y_s)
 \\&=-\sum_{1\leq |w|\leq M}
 \scal{\proj_{>N 
 }\mathbf{W}^\infty_{s,t},T_v^*w}f_{w}(Y_s)=o(|t-s|)\,,
\end{align*}
as $\langle\mathbf{W}^\infty_{s,t},w\rangle=O(|t-s|^2)$ for all $w$ with $|w|>N$, since then $(s,t)\mapsto\langle\mathbf{W}^\infty_{s,t},w\rangle$ is a smooth function on the compact domain $[0,T]^2$ with $\langle\mathbf{W}^\infty_{t,t},w\rangle=0$ and $\partial_s|_{s=t}\langle\mathbf{W}^\infty_{s,t},w\rangle=\langle\dot{\mathbf{W}}^\infty_{t,t},w\rangle=0$.
\end{proof}
\begin{remark}
The second argument in the above proof is valid for genuine (non-smooth) $\gamma$-H\"older weakly geometric rough paths, with $N = \floor{1/\gamma}$, $Y:[0,T]\to\mathbb{R}^e$ an arbitrary continuous path and choosing $\mathrm{MinExt}(\mathbf{W})$ as the Lyons extension of $\mathbf{W}$, see Proposition \ref{prop:smooth_vs_normal}. There, it again correctly identifies the respective rough differential equations via Davies' expansion. 
This argument first presented in \cite[Section~2.5.2]{Preiss21} fills a gap left in the proof of \cite[Theorem~38]{Bruned2019} where it is misleadingly suggested that a Lie algebraic argument can be used.
Indeed, as noted in Remark \ref{rmk:RosaNoGo} there is no Davie-type definition of RDE solution where the sum is restricted to a Lie basis. 
\end{remark}
As final application of the section we combine Theorems \ref{thm:DynRen} and \ref{thm:renormalisation_RDE} with the notion of minimal coupling and sum to obtain a ``smooth geometric'' renormalisation along a time component, see  \cite[Thm. 38]{Bruned2019}.  Consider a time-space path of the form \[
\bar{X}_t = (t, X_t) = (X^0_t,X^1_t) \in \R \oplus \R^d = \R^{1+d}
\]  
for some smooth path $X^1\colon [0,T]\to \R^d$ (the first component of  canonical basis of $\R^{d+1}$ will be denoted by $e_0$). Assume also that there exists a good sgrm  $\X^1:[0,T]^2 \to G (\R^d)$ extending $X^1$. Then we can introduce the  usual signature of time component $\X^0:[0,T]^2 \to G (\R)$  defined by
$$
\X^0_{s,t}=\exp_{\otimes}((t-s)e_0)
$$ 
and the choice 
$$\bar{\X}:=(\X^0,\X^1)_{\text{min}}$$
constitutes a canonical way to extend $\bar{X}$. Choosing then a Lie polynomial $v_0\in \cL(\R^{d+1})$, we can  also consider the translation map $T_{v_0}$ obtained by determined by the identity action on $e_1,\dots, e_d$, and $e_0 \mapsto e_0 + v_0$. Then we can explicitly describe $T_{v_0}\bar{\X}$ and differential equations driven by that.
\begin{corollary}\label{cor_time_transl}
Let  $\X^1$ be a good sgrm and $v_0\in\mathcal{L}(A)$. Then the translation $t\mapsto \hat{T}_{v_0}\bar{\X}_{0,t}$ can be described as the sum $\mathbf{V}_0\srplus \bar{\X}$, where $\mathbf{V}_0$ is given by
\[(\mathbf{V}_0)_{s,t}=\exp_{\otimes}(v_0(t-s))\,.\]
Moreover, for any given family $\left( f_{\hat{0}}, f_a \colon a\in A \right)$ one has the equivalence 
$$
dY = \bar{f} (Y) d (\hat{T}_{u_{0}}(\bar{\X}))\qquad \text{iff} \qquad dY = (f_0 + \bar{f}_{\Phi^*_Hu_0})(Y) dt+ f (Y) d \X^1\,,
$$
where $\bar{f}\colon T(\R^{d+1})\to \Vect^\infty(\bR^e)$ and $f\colon T(\R^{d+1})\to \Vect^\infty(\bR^e)$ are defined like in \eqref{map_f} starting respectively from $( f_0, \dots , f_{d})$ and $( f_1, \dots , f_{d})$.
\end{corollary}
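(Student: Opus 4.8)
The plan is to reduce both assertions to the dynamic description of translation (Theorem~\ref{thm:DynRen}), the definition of the canonical sum (Definition~\ref{def:sum_and_scalar_multiplication}) and Theorem~\ref{thm:renormalisation_RDE}, the whole argument being organised around one explicit computation: the diagonal derivative of the minimal coupling $\bar{\X}$. First I would observe that, by construction, $\bar{\X} = \iota_0\X^0 \srplus \iota_1\X^1$ is the Cartan development from $\1$ of $\dot{(\iota_0\X^0)}_{t,t} + \dot{(\iota_1\X^1)}_{t,t}$; from $\X^0_{s,t} = \exp_\otimes((t-s)e_0)$ one reads off $\dot{(\iota_0\X^0)}_{t,t} = e_0$, while $\mathfrak{x}(t) := \dot{(\iota_1\X^1)}_{t,t} = \dot{\X^1}_{t,t}$ lies in $\cL(\R^d)\subset\cL(\R^{1+d})$ by Lemma~\ref{lem:good}. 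Hence $\dot{\bar{\X}}_{t,t} = e_0 + \mathfrak{x}(t)$, so $\bar{\X}$ is itself a good sgrm; and since $T_{v_0}$ fixes $e_1,\dots,e_d$ and maps $e_0\mapsto e_0 + v_0$ while $\mathfrak{x}(t)$ involves only the letters $1,\dots,d$, one gets $T_{v_0}(\dot{\bar{\X}}_{t,t}) = v_0 + \dot{\bar{\X}}_{t,t}$.

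For the first claim I would then invoke Theorem~\ref{thm:DynRen}(i) for the sgrp $t\mapsto\bar{\X}_{0,t}$, which starts at $\1$: it identifies $t\mapsto\hat{T}_{v_0}\bar{\X}_{0,t}$ with the Cartan development from $\1$ of $T_{v_0}\dot{\bar{\X}}_{t,t} = v_0 + \dot{\bar{\X}}_{t,t}$. Since $(\mathbf{V}_0)_{s,t} = \exp_\otimes(v_0(t-s))$ has diagonal derivative $v_0$, this is exactly the Cartan development defining $\mathbf{V}_0\srplus\bar{\X}$ in Definition~\ref{def:sum_and_scalar_multiplication}; uniqueness of Cartan development (Theorem~\ref{fundthmSGRP}) then yields $\hat{T}_{v_0}\bar{\X} = \mathbf{V}_0\srplus\bar{\X}$.

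For the equivalence of differential equations I would apply Theorem~\ref{thm:renormalisation_RDE} to the good sgrm $\bar{\X}$ with the collection $v = (v_0,0,\dots,0)\subset\cL(\R^{1+d})$ --- the collection generating $\hat{T}_{v_0}$ --- obtaining $dY = \bar{f}(Y)\,d(\hat{T}_{v_0}\bar{\X})$ if and only if $dY = \bar{f}^{v}(Y)\,d\bar{\X}$, with $\bar{f}^v_0 = f_0 + \bar{f}_{v_0}$ and $\bar{f}^v_i = f_i$ for $i\ge1$. I would then unpack the right-hand equation through Definition~\ref{def:DEsgrp} and the computation above: $\dot{Y}_s = \langle\bar{f}^v(Y_s),e_0\rangle + \langle\bar{f}^v(Y_s),\mathfrak{x}(s)\rangle$, where the first pairing is $\bar{f}^v_0(Y_s) = (f_0+\bar{f}_{v_0})(Y_s)$, i.e. the $dt$-term, and the second, since $\mathfrak{x}(s)$ is supported on words over $\{1,\dots,d\}$ on which $\bar{f}^v_w = \bar{f}_w = f_w$ (because $v_i = 0$ for $i\ge1$), equals $\langle f(Y_s),\dot{\X^1}_{s,s}\rangle$, the right-hand side of $dY = f(Y)\,d\X^1$. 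This is the claimed equivalence, the term $\bar{f}_{\Phi^*_H u_0}$ in the statement being $\bar{f}_{v_0}$ here.

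The bulk of this is routine bookkeeping; the one point that will need care is keeping track of which letters appear where --- using the explicit form of $\X^0$ to pin the $e_0$-coefficient of $\dot{\bar{\X}}_{s,s}$ to exactly $1$, and using that $\dot{\X^1}_{s,s}$ sits in the sub-Lie-algebra $\cL(\R^d)$, so that translating in the $e_0$-direction contributes only the single extra Lie element $v_0$ and leaves the $\X^1$-driven part of the equation untouched. Everything else reduces cleanly to Theorems~\ref{thm:DynRen} and~\ref{thm:renormalisation_RDE}.
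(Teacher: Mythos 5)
Your argument is correct and follows essentially the same route as the paper's proof: compute the diagonal derivative $\dot{\bar{\X}}_{t,t}=e_0+\dot{\X}^1_{t,t}$, use Theorem \ref{thm:DynRen} and uniqueness of the Cartan development to identify $T_{v_0}\bar{\X}$ with $\mathbf{V}_0\srplus\bar{\X}$, and then combine Theorem \ref{thm:renormalisation_RDE} (with the collection $(v_0,0,\dots,0)$, so $\bar f^{v}_0=f_0+\bar f_{v_0}$, $\bar f^{v}_i=f_i$) with Definition \ref{def:DEsgrp} to split the equation into the $dt$-part and the $\X^1$-driven part. Your explicit remarks that $\dot{\X}^1_{s,s}\in\cL(\R^d)$ (so $T_{v_0}$ and $\bar f^{v}$ act trivially on that part) and that $\Phi_H^*u_0$ reduces to $v_0$ in the geometric setting are exactly the bookkeeping the paper leaves implicit.
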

\begin{proof}
The proof follows by applying the results in the section. By construction of $\bar{\X}$ one has the following diagonal derivative $\dot{\bar{\X}}_{t,t}=e_0+\dot\X^1_{t,t}$. Then by construction of $T_{v_0}$ and Theorem \ref{thm:DynRen}, we have
\begin{equation}\label{eq:srplusv0_geom}
\begin{split}
\frac{d}{dt}(T_{v_0}(\bar{\X})_{s,t})|_{t=s}=T_{v_0}(e_0+\dot\X^1_{s,s})=v_0+e_0+\dot\X^1_{s,s}=\frac{d}{dt}(\mathbf{V}_0\srplus\bar{\X})_{s,t}|_{t=s}\,.
\end{split}
\end{equation}
Since the diagonal derivative identifies uniquely $T_{v_0}(\bar{\X})$ we conclude. Passing to the differential equation, we remark from Definition \ref{def:DEsgrp} that a smooth path $Y\colon [0,T]\to \mathbb{R}^e$ is a  solution of $dY=\bar{f}(Y)d\bar{\X}$ if and only if
\[dY = f_0(Y) dt+ f (Y) d \X^1\,.\]
Moreover, the map $\bar{f}^{v_0}$ satisfies trivially $\bar{f}^{v_0}_0=f_0 + \bar f_{v_0}$, $\bar{f}^{v_0}_i=f_i$ for any $i=1, \cdots,d $. We conclude then by Theorem \ref{thm:renormalisation_RDE}.
\end{proof}

\section{Smooth quasi-geometric rough paths}\label{sec:sqgrp}

In the sequel, we extend the previous results and notions of smooth rough paths in the context of quasi-geometric rough paths, the natural generalisation of  rough paths when we consider an underlying quasi-shuffle algebra, see \cite{hoffman2000}. Quasi-geometric rough paths were first introduced in talks by David Kelly and properly studied  in \cite{Bel2020a}.

\subsection{Quasi-shuffle algebras}\label{sec:quasishuffle}
We briefly review quasi-shuffle algebras in the version of \cite{hoffman2000}. Their origin  can be traced back to \cite{Cartier} and  we also refer the reader to \cite{FoissyPatras,MP}.  We shall use Hoffman’s isomorphism in the context of stochastic  integration (’It\^o vs. Stratonovich’) which was discussed in detail in \cite{kurusch15}. 
 
Let us start from a generic vector space $\bR^A$  where  $A$ is a locally finite (with respect to the weight function $\omega$ below) set called \textbf{alphabet}. To link this space with the previous section, we   assume the inclusion $\{1\,\cdots, d\}\subset A $,  which induces a canonical inclusion $\bR^d\subset \bR^A$. Given an alphabet $A$, with a slight abuse of notation we will denote by $T(A)$ and $T((A))$  the spaces of tensor polynomials and tensor series over $\bR^A$ respectively. To simplify the notation, in what follows we will also identify elements of $T((A))$ as words, as explained in the previous section i.e.
\[e_{a_1}\otimes  \cdots \otimes e_{a_n}\longleftrightarrow a_1\cdots a_n\,,\]
for any $a_1\,, \cdots \,, a_n\in A$. Moreover, we use the same symbol $\otimes$ to denote the tensor product operation of $T((A))$ and the ``external" algebraic tensor product of two vector spaces.

 The quasi-shuffle product relies on the existence of a \textbf{commutative bracket}, i.e. a map $\{\,,\}\colon \overline{A}\times \overline{A}\to \overline{A}$, where $\overline{A}=A\cup \{0\} $ such that
\begin{equation}\label{equ:bracket1}
\{ a ,0\}= \{ 0 ,a\}=0\,, \quad \{ a ,b\}= \{ b ,a\}\,, \quad \{ a ,\{b, c\}\}=  \{ \{a ,b\}, c\} .
\end{equation}
We adopt the notation $\{ a \} =a $, for $a \in A$, and more generally, for any  word $w= a_1\cdots a_n$ we set  $\{a_1\cdots a_n\}:=\{a_1 ,\{\cdots \{a_{n-1}, a_{n}\}\}\cdots\}$ independently of the order of the letters in the word and the brackets. Together with the choice of a commutative bracket $\{\,,\}$ we  also fix a weight  $\omega$ which is compatible with $\{\,,\}$, i.e. $\omega$ is a function $\omega\colon A\to  \bN^*$ satisfying
 \begin{equation}\label{equ:bracket2}
\omega(\{ab\})=\omega(a)+ \omega(b)
\end{equation}
when $\{ab\}\neq 0$. Note that   there could be several weights  compatible with a given bracket and the definition
of bracket does not need a weight.

Given a word $w=a_1  \dots a_n$  we denote by $|w|=n$  the  length of $w$ and  introduce its weighted length $\norm{w}= \omega(a_1)+ \cdots + \omega(a_n)$. Since $\omega(a)\geq 1$ for all $a\in A$ we have trivially $ |w|\leq \norm{w}$. The weighted length induces a grading on $T(A)$. To stress this property, we also use the notations $T_{\omega}(A)$ and $T_{\omega}((A))$ when we grade these vector spaces according to the weighted length. e.g.
\[
T_{\omega}(A)= \bigoplus_{n=0}^{\infty} \langle\text{\{words $w\colon \Vert w \Vert =n$\}}\rangle\,.
\]
Moreover, for any  $N\in \mathbb{N}$ we define the truncated space $T_\omega^N(A)$ by taking words  of weighted length at most $N$, we denote the projection on $T_\omega^N(A)$ by $\proj_{N,\omega}$. Using the weighted length, we can  view $T^N_{\omega}(A)$ as a quotient algebra and introduce the corresponding truncated tensor product $\otimes_{N, \omega}$. The pairing $\langle\,, \rangle$ between  $T((\bR^d))$ and $T(\bR^d)$ generalises  to  the pairing $\langle\,, \rangle \colon T_{\omega}((A))\times T_{\omega}(A)\to \bR$    for any alphabet $A$.  Its restriction    $T(A)\times T(A) \subset T((A))\times T(A)$ yields a scalar product on $T(A)$. We denote by  $\langle \cdot, \cdot \rangle_N$  the restriction to truncated spaces $T^N_{\omega}(A)\times T^N_{\omega}(A)$. As before, the set  $\{ w : \|w\| \le N \}$ is an orthonormal basis of $T^N_{\omega}(A)$.  Combining $\{\,,\}$ and the shuffle product, we introduce the quasi-shuffle product. 
\begin{definition}
The \textbf{quasi-shuffle product} $\qshuffle$  is  the unique bi-linear map in $T(A)$ satisfying the relation $\1 \,\qshuffle\, v= v\, \qshuffle\,\1 =v$ for any $v\in T(A) $ and the recursive identity
\begin{equation}\label{quasi_shuffle_recursive}
v\,a \qshuffle w\,b= (v \qshuffle w\,b)\, a + (v\,a \qshuffle w)\, b+  (v\qshuffle w)\{a b\} \,,
\end{equation}
for any  words $v,w\in T(A)$ and letters $a,b\in A$.
\end{definition}
\begin{example}
\label{rmk:shuffle}
If $A= \{1\,, \cdots\,, d\}$ and the commutative bracket is trivial, i.e. $\{a  b\}=0$ for any $a,b\in A$, then   the recursive relation \eqref{quasi_shuffle_recursive} reduces to 
\[va \qshuffle wb= (v \qshuffle wb)a + (va \qshuffle w)b\,.\]
so that $\qshuffle$ coincides with the standard shuffle product $\shuffle$ in \eqref{shuffle_recursive}. Observe that setting $\omega(i)=1$ for all $i \in A$, one has $\norm{w}=|w|$ and  $T_{\omega}(\{1\cdots d\})$ is exactly $T(\bR^d)$ but in general we can consider the shuffle algebra over a generic alphabet with a weight which is not identically $1$, see e.g. in \cite{hairer2015geometric}.
\end{example}

\begin{example} \label{exe:bracket}

 For fixed integers $M,d \ge 1$,   we define the alphabet $\bA_{M}^{d}$ as
\[\bA_{M}^{d}=\left\{\ga\in  \bN^d\setminus \{0\}\colon \sum_{i=1}^d \ga_i\leq M\right\}\,,\]
 as well as the weight $ \omega({\ga})= \sum_{i=1}^d \ga_i$. We also define the bracket $\{,\}_{M}\colon \overline{\bA_{M}^{d}}\times\overline{\bA_{M}^{d}}\to\overline{\bA_{M}^{d}}$ as the unique bilinear map satisfying  the property
\begin{equation}\label{eq:def_[]N}
\{\ga,\gb\}_{M}: =\left\{
	\begin{array}{ll}
    \ga +\gb  & \mbox{if } \ga+\gb\in \bA_{M}^{d}\,, \\
	0 & \mbox{otherwise},
	\end{array}
	\right.
\end{equation}
for any  $\alpha, \beta\in \bA_{M}^{d}$. The sum arising in this expression  is the intrinsic sum between multi-indices. The set  $\{1,\cdots, d\} $ embeds in $\bA_{M}^{d}$ for any $M\geq 0$ by simply sending every $i \in\{1\,, \cdots\,, d\}$ to the $i$-th element of the canonical basis. Using this inclusion  the letter $\{112\}$ is identified with the multi-index $(2,1,0,...0)$ and e have $\{ i_1\cdots i_n\}=0$ for any  $n>M $ and $i_j\in \{1,\cdots,d\}$. It follows from standard properties on multi-indices that $\{, \}$ and $\omega$ satisfy properties \eqref{equ:bracket1} and \eqref{equ:bracket2}. The associated quasi-shuffle algebra  $T_{\omega}(\bA_{M}^{d})$ is used to define the notion of quasi-geometric bracket extension in \cite{Bel2020a}. We can also drop the finite index $M$ and consider the infinite alphabet $\bA^{d}= \bN^d\setminus \{0\}$ with the same weight. This alphabet is then isomorphic to the free semi-group generated over $d$ elements. The associated quasi-shuffle algebra was intensively used in \cite{Tapia20}.
\end{example}

For any choice of $\{\,,\}$ the quasi-shuffle product $\qshuffle$ is a well-defined commutative product on $T(A)$, see  \cite{hoffman2000}. It further defines  a product on the  graded algebra $T_{\omega}(A)$ for  a given compatible weight  function  $\omega\colon A\to  \bN^*$. The product $\shuffle$ is also  compatible with the grading on $T_{\omega}(A)$, giving rise to two  algebra structures on the same graded vector space.

From a Hopf algebraic point of view, see Section \ref{Hopf_algebra_section} for the general framework, we can equip with $T_{\omega}(A)$ with the deconcatenation coproduct $\Delta\colon T_{\omega}(A) \to  T_{\omega}(A)  \otimes T_{\omega}(A) $ and the counit  $\1^*\colon T_{\omega}(A) \to \bR$ defined respectively  by
\[\Delta (a_{i_1}\cdots a_{i_n})=\1\otimes a_{i_1}\cdots a_{i_n}+ \sum_{k=1}^n a_{i_1}\cdots a_{i_k}\otimes a_{i_{k+1}}\cdots a_{i_n}+ a_{i_1}\cdots a_{i_n} \otimes \1,\]
\[  \1^*(w): =\left\{
	\begin{array}{ll}
    1 & \mbox{if } w=\1\,, \\
	0 & \mbox{otherwise}
	\end{array}
	\right.\]
Both the triples $(T_{\omega}(A),\qshuffle, \Delta)$ and  $(T_{\omega}(A),\shuffle, \Delta)$  are  graded Hopf algebras, see \cite{hoffman2000}. A fundamental result is the existence of an explicit isomorphism between these two Hopf algebras for any commutative bracket $\{\,,\}$, see \cite[Thm.~3.3]{hoffman2000}.

\begin{theorem}[Hoffman Isomorphism]\label{thm:hoffmann_iso} 
For any commutative bracket $\{\,,\} $ we define the Hoffman ``exponential'' and  ``logarithm'' $\Phi_H$, $\Psi_H\colon  T(A) \to  T(A)$ on any word as
\begin{equation}\label{defn_exp}
\Phi_H(w)=\sum_{I\in C(\abs{w})}\frac{1}{I!}\{w\}_I\,, \quad\Psi_H(w)=\sum_{I\in C(\abs{w})}\frac{(-1)^{\abs{w}-|I|}}{I}\{w\}_I\,,
\end{equation}
 where $C(\abs{w})$ is the set of compositions of order  $\abs{w}$ i.e. the multi-indices $I=(i_1,\cdots, i_m)$ such that  $ i_1+ \cdots+ i_m= \abs{w}$ and 
\[I!= i_1!\cdots i_m!\,,\quad I= i_1\cdots i_m\,, \quad  |I|= m\,.\]
Moreover, we denote by $\{w\}_I$  the word
\[ \{w\}_I:= \{w_1\cdots w_{i_1}\}\{w_{i_1+1}\cdots w_{i_1+i_2}\}\cdots\{w_{i_1+\cdots +i_{m-1}+1} \cdots w_n\}\,.\]
The map $\Phi_H\colon (T_{\omega}(A), \shuffle, \Delta )\to (T_\omega(A), \qshuffle, \Delta)$ is an isomorphism of graded Hopf algebras such that $\Psi_H= \Phi_H^{-1}$.
\end{theorem}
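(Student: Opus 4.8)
The plan is to establish three facts and then assemble them. First, $\Phi_H$ and $\Psi_H$ are mutually inverse, weight-preserving linear automorphisms of $T_\omega(A)$. Second, $\Phi_H$ carries $\shuffle$ to $\qshuffle$, i.e. $\Phi_H(u\shuffle v)=\Phi_H(u)\qshuffle\Phi_H(v)$ for all words $u,v$. Third, $\Phi_H$ intertwines the (common) deconcatenation coproduct $\Delta$ and the counit $\1^*$. Granting these, $\Phi_H$ is a bijective bialgebra morphism between two graded connected bialgebras; since in a graded connected bialgebra the antipode is the unique convolution inverse of the identity and is therefore preserved by any bialgebra morphism, $\Phi_H$ is automatically a Hopf algebra isomorphism, and its inverse $\Psi_H=\Phi_H^{-1}$ is then automatically an algebra isomorphism $(T_\omega(A),\qshuffle)\to(T_\omega(A),\shuffle)$. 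The grading claim is immediate: by \eqref{equ:bracket2} iterated over the blocks of a composition one has $\|\{w\}_I\|=\|w\|$ for every $I\in C(|w|)$ with $\{w\}_I\neq 0$, so $\Phi_H$ and $\Psi_H$ preserve each weight-homogeneous component; and on each word the defining sums are finite since $C(|w|)$ is finite.

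For the first fact I would compute $\Psi_H\circ\Phi_H$ on a word $w$ with $|w|=n$. Expanding both sums yields a double sum over pairs of compositions $(I,J)$ with $I\models n$ and $J\models|I|$, and the crucial input is $\{\{w\}_I\}_J=\{w\}_{I\ast J}$, where $I\ast J$ is the coarsening of $I$ obtained by grouping its parts into consecutive blocks of sizes $j_1,\dots,j_{|J|}$ and summing within each block; this holds by the associativity–commutativity of the bracket \eqref{equ:bracket1}. Reindexing by $K:=I\ast J\models n$ — the pairs $(I,J)$ producing a fixed $K$ being in bijection with the refinements $I$ of $K$ together with the record of how $I$ subdivides each part of $K$ — the coefficient of $\{w\}_K$ factors over the parts $k_1,\dots,k_{|K|}$ of $K$ as a product of sums $\sum_{(c_1,\dots,c_r)\models k_\ell}\frac{(-1)^{r-1}}{r\,c_1!\cdots c_r!}$, each of which is the coefficient of $x^{k_\ell}$ in $\sum_{r\ge1}\frac{(-1)^{r-1}}{r}(e^x-1)^r=\log\!\big(1+(e^x-1)\big)=x$, hence equals $\delta_{k_\ell,1}$. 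Only $K=(1,\dots,1)$ survives, so $\Psi_H\Phi_H(w)=\{w\}_{(1,\dots,1)}=w$. The identity $\Phi_H\Psi_H=\mathrm{id}$ follows by the identical bookkeeping with $\exp$ and $\log$ exchanged, the one-variable identity now being $\sum_{r\ge1}\frac1{r!}\big(\log(1+x)\big)^r=\exp(\log(1+x))=1+x$.

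The heart of the proof, and the step I expect to be the real obstacle, is the second fact, $\Phi_H(u\shuffle v)=\Phi_H(u)\qshuffle\Phi_H(v)$. I would argue by induction on $|u|+|v|$, the base case (one factor equal to $\1$) being trivial. For the inductive step write $u=u'a$, $v=v'b$ with $a,b\in A$, expand the left side by the shuffle recursion \eqref{shuffle_recursive} and the right side by the quasi-shuffle recursion \eqref{quasi_shuffle_recursive}, and use a recursion for $\Phi_H$ describing $\Phi_H(w\,a)$: splitting the sum over compositions of $|w|+1$ according to the size of the \emph{last} block yields the term $\Phi_H(w)\,a$ from singleton blocks $\{a\}$, and, using $\{\cdots a\}=\{\{\cdots\}\,a\}$, a reorganization into $\Phi_H$ of strictly shorter words from blocks of size $\ge2$ ending in $a$. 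Combining these three recursions, matching the three terms of \eqref{quasi_shuffle_recursive} — with the $\{ab\}$-term arising precisely from blocks that merge the final letters of $u$ and $v$ — and invoking the inductive hypothesis gives the claim. This is the computationally delicate point, and it is exactly here that the precise Hoffman coefficients $1/I!$ are forced: any other normalization breaks the matching. (This is, in effect, Hoffman's argument \cite[Thm.~3.3]{hoffman2000}.)

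Finally, for the third fact: $\1^*\circ\Phi_H=\1^*$ holds because $\Phi_H$ fixes $\1$ and sends every nonempty word to a combination of words of length $|I|\ge1$. For $\Delta\circ\Phi_H=(\Phi_H\otimes\Phi_H)\circ\Delta$ one checks that a composition $I$ of $w$ together with a cut falling between two of its blocks corresponds bijectively to a deconcatenation $w=w_{\le k}w_{>k}$ together with compositions of $w_{\le k}$ and of $w_{>k}$, the coefficient $1/I!$ factoring as the product of the two corresponding coefficients, while cuts falling strictly inside a block contribute nothing since a bracketed block is a single letter. This matches the two sides term by term, and assembling the three facts completes the proof.
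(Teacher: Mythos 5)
There is no in-paper proof to compare against: Theorem \ref{thm:hoffmann_iso} is stated as an imported result, cited directly to \cite[Thm.~3.3]{hoffman2000}, so your proposal has to be judged on its own. The parts you actually carry out are correct and are the standard route. The coarsening identity $\{\{w\}_I\}_J=\{w\}_{I\ast J}$ (valid by \eqref{equ:bracket1}) together with the block-wise generating-function identities $\log\bigl(1+(e^x-1)\bigr)=x$ and $\exp\bigl(\log(1+x)\bigr)=1+x$ does give $\Psi_H\Phi_H=\Phi_H\Psi_H=\mathrm{id}$; the cut-between-blocks bijection correctly yields $\Delta\circ\Phi_H=(\Phi_H\otimes\Phi_H)\circ\Delta$ and compatibility with the counit; gradedness follows from \eqref{equ:bracket2} exactly as you say (with vanishing terms when a block brackets to $0$); and a bijective morphism of connected graded bialgebras is automatically a Hopf algebra isomorphism, so $\Psi_H$ is then an algebra isomorphism for free.

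The one step that is not yet a proof is the one you flag yourself: multiplicativity $\Phi_H(u\shuffle v)=\Phi_H(u)\qshuffle\Phi_H(v)$, which is the actual content of Hoffman's theorem. As sketched, the induction has a structural wrinkle: your last-block recursion gives $\Phi_H(wa)=\Phi_H(w)\,a+\sum_{k\geq1}\tfrac{1}{(k+1)!}\,\Phi_H(w_1\cdots w_{n-k})\{w_{n-k+1}\cdots w_n a\}$, whose extra terms end in a bracketed block rather than a letter of the original word, while the right-hand side $\Phi_H(u)\qshuffle\Phi_H(v)$ must be expanded via \eqref{quasi_shuffle_recursive} applied to words whose final ``letters'' are blocks of varying size. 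So the naive inductive hypothesis you state does not directly match term by term; one needs either a strengthened induction that tracks the final block, or Hoffman's own bookkeeping (equivalently the Hoffman--Ihara formal-power-series formulation, in which any series $f(x)=x+\cdots$ induces such a map and composition of series corresponds to composition of maps). Since you explicitly defer to Hoffman at exactly this point, your proposal should be read as a correct proof outline, complete except for the central algebra-morphism step, which is imported rather than proved --- which is also precisely what the paper itself does by citing \cite{hoffman2000}.
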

With $\Phi_H$ and $\Psi_H$ as in \eqref{defn_exp},   two  explicit formulae were shown in \cite{hoffman2000} for the adjoint maps $ \Phi_H^*, \Psi_H^* \colon T_{\omega}(A)\to T_{\omega}(A)$ with respect to scalar product $\langle \,, \rangle$. Indeed, for any word $w=a_1\cdots a_n$ we have
\begin{equation}\label{defn_exp_dual1}
 \Phi^*_H(w)= \Phi^*_H (a_1) \cdots  \Phi^*_H(a_n)\,, \quad \Psi^*_H(w)= \Psi^*_H (a_1) \cdots \Psi^*_H(a_n)\,
\end{equation}
and for each letter $a\in A$,   the following identity holds:
\begin{align}\label{defn_exp_dual_2}
\Phi^*_H(a)=\sum_{n\geq 1}\sum_{\{a_1\cdots a_n\}=a}\frac{1}{n!}a_1\cdots a_n\,,  \quad\Psi^*_H(a)=\sum_{n\geq 1}\sum_{\{a_1\cdots a_n\}=a}\frac{(-1)^{n-1}}{n}a_1\cdots a_n\,,
\end{align}
where the underlying set of summation $ \{a_1\cdots a_n\}=a$ involves all possible ways of writing the letter $a$ in terms of brackets of the word $a_1\cdots a_n$.  By simply considering  the graded dual of the shuffle and quasi-shuffle Hopf algebras (see section \ref{Hopf_algebra_section}) in Theorem \ref{thm:hoffmann_iso}, the map $\Phi_H^*$ becomes an isomorphism   $\Phi_H^*\colon (T_\omega(A), \otimes, \Delta_{\qshuffle})\to (T_\omega(A), \otimes, \Delta_{\shuffle})$ where   $\Delta_{\qshuffle},\Delta_{\shuffle}\colon T_{\omega}(A)\to T_{\omega}(A)\otimes T_{\omega}(A)$ are the dual coproducts associated to the shuffle and quasi-shuffle operation. To extend $\Phi^*_H$ and $\Psi_H^*$ to the whole set $T((A))$, we observe that  the concatenation product $\otimes$ easily extends to $T_{\omega}((A))$. However, the coproducts  $\Delta_{\qshuffle}$ and $\Delta_{\shuffle}$ do not make sense on $T_{\omega}((A))\otimes T_{\omega}((A))$, see e.g. \cite[Thm.3.6]{Preiss16} for a counterexample on $T((\R^1))$. They need to be extended to   the completed tensor space, see \cite[Sec.~1.4]{reutenauer1993free}
\[T_{\omega}((A)){\overline{\otimes}}T_{\omega}((A)):= \prod_{k,m=0}^{\infty} \langle \text{\{words $w\colon \Vert w \Vert =k$\}}\rangle  \otimes \langle \text{\{words $w \colon \Vert w \Vert =m$}\rangle\,, \] leading to two  topological graded coalgebras $(T_\omega((A)),\Delta_{\qshuffle})$ and $(T_\omega((A)),\Delta_{\shuffle})$.

\begin{proposition}\label{prop:dual}
The maps $\Phi^*_H$ and $\Psi^*_H$ defined by \eqref{defn_exp_dual1} and \eqref{defn_exp_dual_2}   uniquely extend  to two linear maps $\Phi^*_H,\Psi^*_H \colon T_{\omega}((A))\to T_{\omega}((A))$, for which we shall use the same notation.  The map $\Phi^*_H$ gives rise to  a graded automorphism of  the algebra $(T_\omega((A)), \otimes)$  as well as an isomorphism between the two topological graded coalgebras $(T_\omega((A)),\Delta_{\qshuffle}) $ and $(T_\omega((A)), \Delta_{\shuffle})$. In both cases one has $\Psi_H^*= (\Phi_H^*)^{-1}$.
\end{proposition}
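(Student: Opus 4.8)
The plan is to reduce everything to the single observation that $\Phi^*_H$ and $\Psi^*_H$ are \emph{graded} endomorphisms for the weighted-length grading, after which the extension and all the stated structural properties follow by a soft ``componentwise'' argument. First I would record that $\Phi^*_H$ and $\Psi^*_H$ preserve weighted length: if $a_1\cdots a_n$ is a word with $\{a_1\cdots a_n\}=a\in A$, then iterating \eqref{equ:bracket2} gives $\omega(a)=\sum_{j=1}^n\omega(a_j)=\|a_1\cdots a_n\|$, so by the defining formulae \eqref{defn_exp_dual1}--\eqref{defn_exp_dual_2} each of $\Phi^*_H(a)$, $\Psi^*_H(a)$ is homogeneous of weight $\omega(a)$, and hence $\Phi^*_H(w),\Psi^*_H(w)$ are homogeneous of weight $\|w\|$ for every word $w$. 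Because $A$ is locally finite with respect to $\omega$, each homogeneous component $T^{(n)}_\omega(A)$, spanned by the words of weighted length exactly $n$, is finite-dimensional, so $T_\omega((A))=\prod_n T^{(n)}_\omega(A)$. A graded linear endomorphism of $T_\omega(A)$ therefore extends in exactly one way to a linear endomorphism of $T_\omega((A))$ that respects the grading (equivalently, is continuous for the product topology), namely by acting on each factor separately; applying this to $\Phi^*_H$ and $\Psi^*_H$ yields the asserted unique extensions.

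Next I would check $\Phi^*_H$ is an algebra endomorphism of $(T_\omega((A)),\otimes)$. The concatenation product on $T_\omega((A))$ is graded with each homogeneous component a finite sum, and \eqref{defn_exp_dual1} says precisely that $\Phi^*_H(e_w\otimes e_v)=\Phi^*_H(e_{wv})=\Phi^*_H(e_w)\otimes\Phi^*_H(e_v)$ on basis words; extending by linearity and the componentwise rule from the previous paragraph gives $\Phi^*_H(\mathbf x\otimes\mathbf y)=\Phi^*_H(\mathbf x)\otimes\Phi^*_H(\mathbf y)$ for all $\mathbf x,\mathbf y\in T_\omega((A))$, and similarly for $\Psi^*_H$.

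To see $\Psi^*_H=(\Phi^*_H)^{-1}$ I would argue on $T_\omega(A)$ first and then lift. By Theorem \ref{thm:hoffmann_iso}, $\Psi_H=\Phi_H^{-1}$ on $T_\omega(A)$; taking adjoints with respect to the pairing $\langle\cdot,\cdot\rangle$ gives, for all $x,y\in T_\omega(A)$, $\langle \Phi^*_H\Psi^*_H x,\,y\rangle=\langle x,\,\Psi_H\Phi_H y\rangle=\langle x,y\rangle$ and symmetrically, so $\Phi^*_H\Psi^*_H=\Psi^*_H\Phi^*_H=\mathrm{Id}$ on $T_\omega(A)$. Both compositions are thus the identity on every finite-dimensional homogeneous component, hence, by uniqueness of the graded extension, the identity on all of $T_\omega((A))$. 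In particular $\Phi^*_H$ is bijective, so together with the previous paragraph it is a graded algebra automorphism.

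Finally, for the coalgebra statement I would start from the graded dual of Theorem \ref{thm:hoffmann_iso}, which gives the identity $\Delta_{\shuffle}\circ\Phi^*_H=(\Phi^*_H\otimes\Phi^*_H)\circ\Delta_{\qshuffle}$ on $T_\omega(A)$. Here $\Delta_{\qshuffle}$ and $\Delta_{\shuffle}$ are graded maps $T_\omega((A))\to T_\omega((A))\overline{\otimes}T_\omega((A))$, and the target is again graded with finite-dimensional homogeneous components (its weight-$n$ part is $\bigoplus_{k+m=n}T^{(k)}_\omega(A)\otimes T^{(m)}_\omega(A)$), so $\Phi^*_H\otimes\Phi^*_H$ extends uniquely to a graded endomorphism of it. Since all four composite maps in the identity are graded and agree on the homogeneous polynomials $T_\omega(A)$, which exhaust every homogeneous component, the identity persists on $T_\omega((A))$; combined with bijectivity of $\Phi^*_H$ this makes $\Phi^*_H$ an isomorphism of the topological graded coalgebras $(T_\omega((A)),\Delta_{\qshuffle})$ and $(T_\omega((A)),\Delta_{\shuffle})$, with inverse $\Psi^*_H$. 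The only point that needs genuine (though still routine) care is this last step — passing the polynomial identity to the completion $T_\omega((A))\overline{\otimes}T_\omega((A))$ — and it is exactly the finite-dimensionality of the homogeneous components, i.e. the local finiteness of $(A,\omega)$, that makes it go through.
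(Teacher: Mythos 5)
Your proof is correct and takes essentially the same route as the paper's: the paper likewise uses the weight compatibility \eqref{equ:bracket2} and the local finiteness of $(A,\omega)$ to see that the defining formulas \eqref{defn_exp_dual1}--\eqref{defn_exp_dual_2} yield well-defined (indeed weight-homogeneous) series, extends to $T_{\omega}((A))$ componentwise/by continuity, reads the algebra-morphism property off \eqref{defn_exp_dual1}, and gets the inverse and coalgebra statements by dualizing Theorem \ref{thm:hoffmann_iso} together with density of $T_{\omega}(A)$ and $T_{\omega}(A)\otimes T_{\omega}(A)$ in the respective completions. Your emphasis on gradedness of $\Phi^*_H,\Psi^*_H$ is just a cleaner packaging of the paper's ``locally finite'' coefficient check.
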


\begin{proof}
It is sufficient to   build the extension of $\Phi^*_H$. Combining properties \eqref{defn_exp_dual1} and \eqref{defn_exp_dual_2}, for any series $S=\sum_{w} \alpha_w w\in T_{\omega}((A))$ we define
\[\Phi^*_H(S):=\sum_{w}\sum_{n_1\geq 1}\sum_{\{a_1^1\cdots a_{n_1}^1\}=w_1} \cdots\sum_{n_m\geq 1}\sum_{\{a_1^m\cdots a_{n_m}^m\}=w_m}\alpha_w \frac{1}{n_1!}\cdots\frac{1}{n_m!} a_1^1\cdots a_{n_m}^m\,,\]
where $w=w_1\cdots w_m$. Thanks to property  \eqref{equ:bracket2} and the assumption $\omega(A)\subset \bN^*$, the terms in the  series $\Phi^*_H(S)$ are locally finite, i.e. for any word $u$ there is only a finite number of non-zero terms in the bracket $\langle \Phi^*_H(S),u\rangle$. It follows   from property \eqref{defn_exp_dual1} that $\Phi^*_H$ extended to  infinite series is also an algebra morphism.  The other properties easily follow   from the fact that $T_{\omega}(A)$ and $T_{\omega}(A)\otimes T_{\omega}(A)$ are dense subsets of $T_{\omega}((A))$ and $T_{\omega}((A)){\overline{\otimes}}T_{\omega}((A)).$ 
\end{proof}

The  Hoffman isomorphism allows to define Lie groups and Lie algebras from the corresponding Lie groups and Lie algebras built from the  corresponding shuffle structures. 

In what follows, we use the notations $\mathcal{L}^N(A)$, $G^N(A)$, $\mathcal{L}\left( A\right) $,  $\mathcal{L}\left((A)\right)$ and $G(A)= \exp_{\otimes} \cL((A))$ to denote respectively  the $N$-step Lie polynomials, the free step-$N$ Lie group, the Lie polynomials and Lie series generated by $\bR^A$ and $G(A)= \exp_{\otimes} \cL((A))$, where $\exp_{\otimes}$ is given in \eqref{exp_tensor}. In case we fix a generic weight $\omega$  and $N \in \mathbb{N}$, we denote by $\mathcal{L}^{N}_{\omega}\left(A\right)$ and $G^N_{\omega}(A)$ the following quotients
\[\mathcal{L}^{N}_{\omega}\left(A\right)= \mathcal{L}\left(A\right)/\mathcal{L}^{\omega}_{>N}\left(A\right)\,,\quad G^{N}_{\omega}\left(A\right)= G\left(A\right)/G^{\omega}_{>N}\left(A\right)\]
where  $\mathcal{L}^{\omega}_{>N}$ and $ G^{\omega}_{>N}\left(A\right)$ are respectively the Lie ideal and normal subgroup
\[\mathcal{L}^{\omega}_{>N}\left(A\right):=\langle\{ w : \|w\| > N \}\rangle \cap \mathcal{L}\left(A\right)\,, \quad G^{\omega}_{>N}:=\langle\{ w : \|w\| > N \}\rangle \cap G\left(A\right)\,.\]
(These last two properties follow trivially from \eqref{equ:bracket2}). Moreover, by passing to the quotient one has the identification
\begin{equation}\label{eq:recallGN}
\begin{split}
&G^{N}_{\omega}\left(A\right)=\\& \{ \mathbf{x} \in T_{\omega}^N(A): \langle \mathbf{x},v \shuffle w\rangle = \langle \mathbf{x},v\rangle\langle\mathbf{x},w\rangle\,
\text{on words $w,v$ s.t. $ \Vert w\Vert+ \Vert v\Vert\leq N $} \}
\end{split}
\end{equation}
and the property $G^{N}_{\omega}(A) =\exp_{\otimes_{N,\omega}}(\mathcal{L}^{N}_{\omega}(A))$ where $\exp_{\otimes_{N,\omega}}\colon T^N_{\omega}(A)\to T^N_{\omega}(A)$ is the exponential associated with the truncated product $\otimes_{N, \omega}$
\begin{equation}\label{defn_truncated_exp_weighted}
\exp_{\otimes_{N,\omega}}v= \sum_{n\geq 0}^{N}\frac{v^{\otimes_{N, \omega} ^n}}{n!}\,.
\end{equation}
thereby obtaining that $\mathcal{L}^{N}_{\omega}(A)$ is the Lie algebra of $G^{N}_{\omega}(A)$. Similar objects can be defined in the quasi-shuffle context via the map $ \Psi^*_H$. The proof of the following corollary is a straightforward application of Proposition \ref{prop:dual} combined with the usual properties of Lie series and the Group $G(A)$.
\begin{corollary}\label{prop:hoffmann_iso_lie}
\begin{enumerate}
\item[(i)] The set $\hat{G}(A) :=  \Psi_H^*\,  G(A)=\{ \Psi_H^*\,\beta, \beta \in G(A)\}$ with the operation $\otimes$ is a group which has the following description
\begin{equation}\label{}
\hat{G}(A)= \{ \mathbf{x} \in T((A)): \langle \mathbf{x},v \qshuffle w\rangle = \langle \mathbf{x},v\rangle\langle\mathbf{x},w\rangle
\text{ for all words $w,v$} \}\,.
\end{equation}
We call $\hat{G}(A)$ the group of \textbf{quasi-shuffle characters}.
\item[(ii)] The sets $\hat{\cL}((A))= \Psi_H^*\cL((A))$ and $\hat{\cL}(A):=  \Psi_H^*\,  {\cL}(A)$ with the commutator of $\otimes$ are two Lie algebras, which satisfy $\hat{\cL}(A)\subset  \hat{\cL}((A))$ and $\hat{G}(A)=\exp_{\otimes}\mathcal{L}\left((A)\right)$. We call them the \textbf{quasi-shuffle Lie series} and \textbf{quasi-shuffle Lie polynomials} respectively and they coincide with the free Lie algebra and Lie series generated by the set $\mathfrak{A}=\{\Psi_H^*(a)\colon a\in A\}$.
\item[(iii)] The set  $\hat{G}^N_{\omega}(A):= \Psi_H^*G^N_{\omega}(A)$ with the operation $\otimes_{N, \omega}$ is a Lie group whose Lie algebra is given by $\hat{\mathcal{L}}^{N}_{\omega}\left(A\right):= \Psi_H^*\mathcal{L}^{N}_{\omega}\left(A\right)$ and one has $\hat{G}^N_{\omega}(A)= \exp_{\otimes_{N,\omega}}(\hat{\cL}^{N}_{\omega}(A))$ and the identification
\begin{equation}\label{eq:recallG^N}
\begin{split}
&\hat{G}^{N}_{\omega}\left(A\right)=\\& \{ \mathbf{x} \in T_{\omega}^N(A): \langle \mathbf{x},v \qshuffle w\rangle = \langle \mathbf{x},v\rangle\langle\mathbf{x},w\rangle\,
\text{on words $w,v$ s.t. $ \Vert w\Vert+ \Vert v\Vert\leq N $} \}.
\end{split}
\end{equation}
\end{enumerate}
\end{corollary}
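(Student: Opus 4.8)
The plan is to obtain all three statements by \emph{transporting} the known facts about the shuffle structures along the map $\Psi_H^*=(\Phi_H^*)^{-1}$, so that essentially no new computation is required. The structural input is Proposition \ref{prop:dual}: $\Psi_H^*$ is a \emph{graded} automorphism of the concatenation algebra $(T_\omega((A)),\otimes)$, and $\Phi_H$ is an algebra isomorphism from $(T_\omega(A),\shuffle)$ to $(T_\omega(A),\qshuffle)$ (Theorem \ref{thm:hoffmann_iso}). I would organise the argument into three transport steps: (a) Lie-algebraic structure and exponentials; (b) the character descriptions via adjointness; (c) passage to the weighted truncation.

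For (a)--(b) I would argue as follows. Since $\Psi_H^*$ respects $\otimes$ it respects the commutator $[\cdot,\cdot]_\otimes$, hence carries the Lie subalgebras $\cL((A))$ and $\cL(A)$ of $(T_\omega((A)),[\cdot,\cdot]_\otimes)$ onto Lie subalgebras, which are $\hat{\cL}((A))$ and $\hat{\cL}(A)$ by definition, and it preserves the inclusion, giving $\hat{\cL}(A)\subset\hat{\cL}((A))$. Being a linear automorphism, $\Psi_H^*$ restricts to an isomorphism of $\bR^A$ onto $\spn{\mathfrak A}$ (so $\mathfrak A$ is linearly independent), and the universal property of the free Lie algebra, resp.\ Lie series, transports, identifying $\hat{\cL}(A)$ and $\hat{\cL}((A))$ with the free Lie algebra and Lie series generated by $\mathfrak A$. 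Because $\exp_\otimes$ is built only from $\otimes$ and the grading, both preserved by $\Psi_H^*$, one gets $\Psi_H^*\circ\exp_\otimes=\exp_\otimes\circ\Psi_H^*$, and applying this to $G(A)=\exp_\otimes\cL((A))$ shows that $\hat{G}(A)=\Psi_H^*G(A)$ is a subgroup of $(T((A)),\otimes)$ with $\hat{G}(A)=\exp_\otimes\hat{\cL}((A))$. For the character description, I would take adjoints of $\Phi_H(v\shuffle w)=\Phi_H(v)\qshuffle\Phi_H(w)$ to get $\langle\Phi_H^*\mathbf x,\,v\shuffle w\rangle=\langle\mathbf x,\,\Phi_H(v)\qshuffle\Phi_H(w)\rangle$ for all $\mathbf x\in T((A))$ and words $v,w$; this shows $\Phi_H^*$ sends the set of quasi-shuffle characters onto the set of shuffle characters, which is $G(A)$ by \eqref{equ:recallG}, so applying $\Psi_H^*$ yields exactly the description of $\hat{G}(A)$ claimed in (i).

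For (c) I would repeat (a)--(b) in $T^N_\omega(A)$. The weight axiom \eqref{equ:bracket2} is precisely what makes $\Phi_H$, $\Psi_H^*$, $\shuffle$, $\qshuffle$ and $\otimes$ respect the weighted-length grading, so $\Psi_H^*$ descends to a graded algebra automorphism of $(T^N_\omega(A),\otimes_{N,\omega})$ and $\Phi_H$ to an algebra isomorphism $(T^N_\omega(A),\shuffle)\to(T^N_\omega(A),\qshuffle)$. Then, $G^N_\omega(A)$ being a finite-dimensional Lie group, $\hat{G}^N_\omega(A)=\Psi_H^*G^N_\omega(A)$ is a Lie group with Lie algebra $\Psi_H^*\cL^N_\omega(A)=\hat{\cL}^N_\omega(A)$, and $\exp_{\otimes_{N,\omega}}\circ\Psi_H^*=\Psi_H^*\circ\exp_{\otimes_{N,\omega}}$ gives $\hat{G}^N_\omega(A)=\exp_{\otimes_{N,\omega}}(\hat{\cL}^N_\omega(A))$. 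The truncated character description \eqref{eq:recallG^N} I would deduce from \eqref{eq:recallGN} using the adjointness identity of step (b): when $\|v\|+\|w\|\le N$, both $v\shuffle w$ and $\Phi_H(v)\qshuffle\Phi_H(w)$ are homogeneous of weighted length $\le N$, so that identity only pairs $\mathbf x$ against components of weighted length $\le N$ and thus descends verbatim to $T^N_\omega(A)$. The one place calling for genuine care — and the step I would regard as the crux — is exactly this compatibility of the Hoffman maps and of the quasi-shuffle product with the weighted truncation; everything else is a formal consequence of the slogan that an algebra automorphism transports subgroups, Lie subalgebras and exponentials.
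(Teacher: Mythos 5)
Your proposal is correct and follows essentially the same route as the paper, which simply remarks that the corollary is a straightforward application of Proposition \ref{prop:dual} (the graded $\otimes$-automorphism property of $\Phi_H^*$, $\Psi_H^*$) together with the usual properties of $\cL((A))$ and $G(A)$; your write-up just makes that transport argument explicit, including the truncated case via gradedness.
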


\begin{example}
Let us spell out the primitive elements of  $\hat{\cL}_{\omega}^3(\bA_{2}^{d})$. Starting from the definition of $\cL_{\omega}^3(\bA_{2}^{d})$ and using the notations $[\,,]$ for the Lie bracket, one has that $\cL_{\omega}^3(\bA_{2}^{d})$ is generated as vector space by the following elements 
\[\bigg\{i,  [i,j], \{ij\}, [[i,j],k],[i,\{jk\}]\,\bigg\}\,,\]
where $i,j,k \in \{1, \cdots , d\}$. It follows from the definition of $\Psi^*_H$ that
\[\Psi_H^*(i)= i\,, \quad \Psi_H^*(\{ij\})=\{ij\} - \frac{1}{2}\left( ij+ij\right)\mathbf{1}_{i\neq j}-  \frac{1}{2} ii\,\mathbf{1}_{i= j}\,.\]
We therefore obtain that  $\hat{\cL}_{\omega}^3(\bA_{2}^{d})$ is generated as vector space by
\[\bigg\{i,  [i,j], \Psi_H^*(\{ij\}), [[i,j],k],[i,\Psi_H^*(\{jk\})]\,\bigg\}\,.\]
\end{example}

\subsection{Smooth  quasi-geometric rough paths}\label{subsec:sqgrp}
The notion and properties of smooth rough paths  can be transposed  to the quasi-shuffle context leading to equivalent concepts.
\begin{definition}\label{def:smooth-quasi-geom}
We call level-$N$ {\bf smooth quasi-geometric rough path} over the alphabet $A$   with weight $\omega$ (in short: $N$-{\bf sqgrp})  any non zero path $\X: [0,T] \to T^N_{\omega}(A)$ satisfying the following properties:
\begin{itemize}
\item[(a' i)] for all times $t \in [0,T]$ and all words $v$ and $w$ such that $\norm{w}+\norm{v}\le N$
\begin{equation}\label{eq:qshufflerel}
\langle\X_{t},v \qshuffle w\rangle = \langle \X_{t},v\rangle\langle\X_{t},w\rangle\,.
\end{equation}
\item[(a' ii)] For every word of weighted length $\norm{w} \le N$, the map $t \mapsto \langle\X_{t},w\rangle$ is smooth and we write $\dot{\X}_t $ for the derivative of $\X$.
\end{itemize}
We call level-$N$ {\bf smooth quasi-geometric rough model} (in short: $N$-{\bf sqgrm}) any map $\X: [0,T]^2 \to T^N_{\omega}(A)$ such that the property \eqref{eq:qshufflerel} holds for all $\X_{s,t}$ and the properties (b.ii), (b.iii) in Definition \ref{defn_model_smooth} hold on a set of weighted words with the operation $\otimes_{N, \omega}$. By {\bf smooth quasi-geometric rough path (model)} (in short: {\bf sqgrp} and {\bf sqgrm}) we mean a path (map) with values in the full space of tensor series $T_{\omega}((A))$ where \eqref{eq:qshufflerel} hold for any $w,v\in T_{\omega}((A))$ and relation (b.ii) hold with $\otimes$.
\end{definition}

It follows immediately from Definition \ref{def:smooth-quasi-geom} that  sqgrps and $N$-sqgrps are identified with smooth paths with values in the groups $\hat{G}(A)$ or $\hat{G}^N_{\omega}(A)$ and we can pass to the associated models by considering the increments with respect to $\otimes_{N, \omega}$. Similarly we can easily adapt the notion of extension of a sqgrp and the diagonal derivative of a sqgrm, which takes value in $\hat{\mathcal{L}}((A))$  or $\hat{\mathcal{L}}^N_{\omega}(A)$. Since the shuffle product is a specific case of the quasi-shuffle product, Definition \ref{def:smooth-quasi-geom}  extends also the notions of $N$-sgrps and  $N$-sgrms  in a presence of a generic alphabet $A$ and weight $\omega$, we will call these  objects \textbf{weighted $N$-sgrps} and \textbf{weighted $N$-sgrms}. 

Thanks to the properties of the Hoffman isomorphism, we can easily relate smooth (resp. truncated) quasi-geometric rough paths and models with some corresponding weighted geometric objects by simply composing these objects with  the  appropriate version of the Hoffman isomorphism.

\begin{theorem}\label{thm:fund_quasi_geo}
 Let $N\geq 0$ and $\X\colon[0,T]\to T^N_{\omega}(A)$. Then $\X$ is a  $N$-sqgrp if and only if $\hat{\X}=\Phi_H^*\X$ is a weighted $N$-sgrp. Conversely, $\hat{\X}$ is a weighted $N$-sgrp if and only if $\X=\Psi_H^*\hat{\X}$ is a  $N$-sqgrp. The same result applies to $N$-sqgrms, sqgrps and sqgrms.
\end{theorem}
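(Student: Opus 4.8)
The plan is to leverage Proposition~\ref{prop:dual}, which already gives us that $\Phi_H^*$ is a graded algebra automorphism of $(T_\omega((A)),\otimes)$ intertwining the two coproducts $\Delta_{\qshuffle}$ and $\Delta_{\shuffle}$, together with its truncated counterpart. The key structural fact is that, by Corollary~\ref{prop:hoffmann_iso_lie}, $\Phi_H^*$ maps $\hat G(A)$ bijectively onto $G(A)$ and $\hat G^N_\omega(A)$ onto $G^N_\omega(A)$ (and likewise the Lie algebras), simply because $\hat G(A) := \Psi_H^* G(A)$ by definition and $\Psi_H^* = (\Phi_H^*)^{-1}$. So the whole statement reduces to: (i) $\Phi_H^*$ is a bijection between $N$-sqgrp's and weighted $N$-sgrp's, with the same for models and the untruncated versions; and since $\Psi_H^*$ is the inverse, the ``conversely'' part is automatic.

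The steps I would carry out are as follows. \textbf{Step 1 (characters).} Unwind the definitions: a path $\X\colon[0,T]\to T^N_\omega(A)$ is an $N$-sqgrp iff it is a smooth $\hat G^N_\omega(A)$-valued path, using \eqref{eq:recallG^N} to re-express the quasi-shuffle relation (a' i) as membership in $\hat G^N_\omega(A)$. Since $\Phi_H^*$ is a linear isomorphism sending $\hat G^N_\omega(A)$ onto $G^N_\omega(A)$, and since smoothness of $t\mapsto\langle\X_t,w\rangle$ for all $\|w\|\le N$ is preserved under the fixed linear map $\Phi_H^*$ (it is a fixed change of basis on the finite-dimensional space $T^N_\omega(A)$, commuting with $\partial_t$), we conclude $\X$ is an $N$-sqgrp iff $\Phi_H^*\X$ is a smooth $G^N_\omega(A)$-valued path, i.e.\ a weighted $N$-sgrp (recall weighted $N$-sgrp's over $A$ are by definition exactly smooth $G^N_\omega(A)$-valued paths, the shuffle analogue of Definition~\ref{def:smooth-geom}). \textbf{Step 2 (models).} For $N$-sqgrm's $\X\colon[0,T]^2\to T^N_\omega(A)$, the properties to transport are: the quasi-shuffle relation on $\X_{s,t}$ (handled as in Step 1), Chen's relation $\X_{s,u}\otimes_{N,\omega}\X_{u,t}=\X_{s,t}$, and smoothness. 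Here I use that $\Phi_H^*$ restricted to $T^N_\omega(A)$ is the truncated algebra automorphism $\Phi_H^{*,N}:=\proj_{N,\omega}\Phi_H^*\mathfrak{i}^N$ for $(T^N_\omega(A),\otimes_{N,\omega})$ — this follows from Proposition~\ref{prop:dual} exactly as the analogous computation for $T_v^M$ in the proof of Theorem~\ref{thm:DynRen}(ii), since $\Phi_H^*$ is graded and hence preserves $J^{>N}:=\langle\{w:\|w\|>N\}\rangle$. Consequently $\Phi_H^*(\X_{s,u}\otimes_{N,\omega}\X_{u,t})=(\Phi_H^*\X_{s,u})\otimes_{N,\omega}(\Phi_H^*\X_{u,t})$, so Chen's relation is preserved, and $\Phi_H^*\X$ is a weighted $N$-sgrm. \textbf{Step 3 (untruncated).} For sqgrp's and sqgrm's with values in $T_\omega((A))$, the same argument runs verbatim using the full automorphism $\Phi_H^*$ of $(T_\omega((A)),\otimes)$ from Proposition~\ref{prop:dual}, noting that smoothness of all coordinate functions $t\mapsto\langle\X_t,w\rangle$ is preserved because each $\langle\Phi_H^*\X_t,w\rangle=\langle\X_t,(\Phi_H^*)^\dagger w\rangle$ is a finite linear combination of coordinates of $\X_t$ by the local finiteness in Proposition~\ref{prop:dual}. \textbf{Step 4 (converse).} Since $\Psi_H^*=(\Phi_H^*)^{-1}$ and $\Psi_H^*$ enjoys all the same structural properties (Proposition~\ref{prop:dual}), applying Steps 1--3 with the roles of shuffle and quasi-shuffle swapped gives the converse statement, $\X=\Psi_H^*\hat\X$.

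The only mild subtlety — and what I expect to be the ``main obstacle,'' though it is really a bookkeeping point rather than a genuine difficulty — is making precise that $\Phi_H^*$ interacts correctly with the truncation and with the notion of ``weighted $N$-sgrp'': one must check that $\Phi_H^*$ does not lower weighted length (so it descends to $T^N_\omega(A)$ and to $\hat G^N_\omega(A)\to G^N_\omega(A)$), and that under $\Phi_H^*$ the quasi-shuffle group relation \eqref{eq:recallG^N} becomes precisely the shuffle group relation \eqref{eq:recallGN}. Both are immediate from the graded-ness of $\Phi_H^*$ and from the fact that $\Phi_H^*$ is an algebra map for $\otimes$ intertwining $\Delta_{\qshuffle}$ with $\Delta_{\shuffle}$ (equivalently, dualizing, that $\Phi_H$ intertwines $\shuffle$ with $\qshuffle$), so no new work beyond Theorem~\ref{thm:hoffmann_iso} and Proposition~\ref{prop:dual} is needed.
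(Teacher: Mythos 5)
Your proposal is correct and takes essentially the same route as the paper: both reduce the theorem to the already-established properties of $\Phi_H^*$ and $\Psi_H^*$ from Theorem \ref{thm:hoffmann_iso}, Proposition \ref{prop:dual} and Corollary \ref{prop:hoffmann_iso_lie}. The paper verifies the character relation by the direct pairing computation $\langle\Phi_H^*\X_t,v\shuffle w\rangle=\langle\X_t,\Phi_H(v)\qshuffle\Phi_H(w)\rangle=\langle\Phi_H^*\X_t,v\rangle\langle\Phi_H^*\X_t,w\rangle$ and notes that smoothness survives because the new coordinates are finite linear combinations of the old smooth ones, whereas you phrase the same facts at the level of the character groups and are merely more explicit about gradedness, truncation and the preservation of Chen's relation, which the paper treats as immediate.
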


\begin{proof}
The result follows easily   from  combining the properties of the maps $\Phi_H^*$ and $\Psi_H^*$ in Theorem \ref{thm:hoffmann_iso} with the conditions in Definition \ref{def:smooth-geom} and Definition \ref{def:smooth-quasi-geom}. E.g. starting from a $N$-sqgrp $\X$ one has 
\[
\langle\hat{\X}_{t},v \shuffle w\rangle =\langle\X_{t},\Phi_H(v \shuffle w)\rangle=\langle\X_{t},\Phi_H(v) \qshuffle \Phi_H(w) \rangle =\]
\[=\langle \X_{t},\Phi_H(v)\rangle\langle\X_{t},\Phi_H(w)\rangle=\langle \hat{\X}_{t},v\rangle\langle\hat{\X}_{t},w\rangle \]
for all words with joint weighted length $\norm{w}+\norm{v}\le N$. Moreover,  restricting  $\Phi^*_H\X_t$ to all words with joint weighted length $\norm{w}\le N$, it will be a finite linear combination of the functions $\langle\X_t, v\rangle$ with $\norm{v}\leq N$, which are all smooth functions. The same considerations apply  to $N$-sqgrms, sqgrps and sqgrms trivially. The converse case with $\Psi_H^*$ follows from the properties in Corollary \ref{prop:hoffmann_iso_lie}.
\end{proof}

Thanks to this one-to-one correspondence between quasi-geometric rough paths and geometric rough paths, we can import all the constructions of the previous section by simply checking that  they preserve the maps $\Phi_H^*$ and $\Psi_H^*$. In particular,   the notion of minimal extension for sgrms  can easily be transposed to sqgrms.

\begin{proposition}
Given an $N$-sqgrm  $\Y$ for some $N \in \bN$, there exists exactly one sqgrm extension $\X$  of $\Y$  which is minimal in the sense that for all $s\in[0,T]$ one has
$$
\dot{\X}_{s,s} \in \hat{\cL}^N_{\omega} (A)\,.
$$
We call $q\mathrm{MinExt(\Y)} := \X$ the {\bf quasi-geometric minimal extension} of $\Y$ and also $q\mathrm{MinExt^{N'}(Y)} := \proj_{N',\omega} \X$, for $N' > N$, the $N'$-minimal extension of $\Y$. 
For a fixed interval $[s,t] \subset [0,T]$ it holds that $\X_{s,t}$ only depends on $\{ \Y|_{[u,v]}: s\le u \le v \le t \}$ and we introduce the {\bf quasi-signature} of $\Y$ on $[s,t]$ by $q\mathrm{Sig}(\Y|_{[s,t]}) := \X_{s,t} \in \hat{G}(A)\,$. Moreover, we have the identities
\begin{equation}\label{eq:qsig_sig}
q\mathrm{MinExt(\Y)}= \Psi^*_H \mathrm{MinExt(\Phi^*_H\Y)}\,,\quad  q\mathrm{Sig}(\Y|_{[s,t]})=\Psi^*_H\mathrm{Sig}(\Phi^*_H\Y|_{[s,t]})\,.
\end{equation}
A sqgrm $\X$ is called a {\bf good sqgrm} if it satisfies $\X= q\mathrm{MinExt(\Y)}$ for some $N$-sqgrm $\Y$.

\end{proposition}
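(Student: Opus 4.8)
The plan is to transfer the corresponding statement for \textbf{sgrm}s, namely Theorem~\ref{fundthmSGRP} together with Lemma~\ref{lem:good}, through the Hoffman isomorphism by means of Theorem~\ref{thm:fund_quasi_geo}. First I would fix an $N$-sqgrm $\Y$ and pass to $\hat\Y := \Phi^*_H\Y$, which by Theorem~\ref{thm:fund_quasi_geo} is a weighted $N$-sgrm; here one has to observe that the existence part of Theorem~\ref{fundthmSGRP} goes through verbatim in the weighted setting (the grading by $\|\cdot\|$ is still an algebra grading, the truncated Lie groups $G^{N'}_\omega(A)$ are still finite-dimensional and the relevant Cartan development equations still have unique global solutions). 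This yields $\hat\X := \mathrm{MinExt}(\hat\Y)$, a weighted sgrm extending $\hat\Y$ with $\dot{\hat\X}_{s,s}\in\cL^N_\omega(A)$ for all $s$. I then set $\X := \Psi^*_H\hat\X$ and claim this is the desired minimal sqgrm extension of $\Y$.

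Next I would verify the three required properties of $\X$ in turn. That $\X$ is a sqgrm: immediate from Theorem~\ref{thm:fund_quasi_geo}, since $\hat\X$ is a weighted sgrm and $\X=\Psi^*_H\hat\X$. That $\X$ extends $\Y$: since $\Psi^*_H$ is a graded linear isomorphism (Proposition~\ref{prop:dual}) it commutes with truncation $\proj_{N,\omega}$, so $\proj_{N,\omega}\X = \Psi^*_H\proj_{N,\omega}\hat\X = \Psi^*_H\hat\Y = \Y$; equivalently, $\langle\X_t,w\rangle=\langle\Y_t,w\rangle$ for $\|w\|\le N$. That $\X$ is minimal: by the chain rule, $\dot\X_{s,s} = \Psi^*_H\dot{\hat\X}_{s,s} \in \Psi^*_H\cL^N_\omega(A) = \hat\cL^N_\omega(A)$, the last equality being the definition recorded in Corollary~\ref{prop:hoffmann_iso_lie}(iii). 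For uniqueness, suppose $\X'$ is another sqgrm extension of $\Y$ with $\dot\X'_{s,s}\in\hat\cL^N_\omega(A)$; then $\Phi^*_H\X'$ is a weighted sgrm extension of $\hat\Y$ with $\dot{(\Phi^*_H\X')}_{s,s}\in\Phi^*_H\hat\cL^N_\omega(A)=\cL^N_\omega(A)$, so by the uniqueness clause of Theorem~\ref{fundthmSGRP} it equals $\hat\X$, whence $\X'=\Psi^*_H\Phi^*_H\X'=\Psi^*_H\hat\X=\X$. The same argument also gives $\dot\X_{s,s}=\dot\Y_{s,s}$, since $\dot{\hat\X}_{s,s}=\dot{\hat\Y}_{s,s}$ by Theorem~\ref{fundthmSGRP}.

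For the locality/signature statement I would again pull back: $\mathrm{Sig}(\hat\Y|_{[s,t]})=\hat\X_{s,t}$ depends only on $\hat\Y|_{[u,v]}$ for $s\le u\le v\le t$ by Theorem~\ref{fundthmSGRP}, and since $\Phi^*_H,\Psi^*_H$ act pointwise in time (they are fixed linear maps applied to each $\Y_t$), the restriction of $\Y$ to a subinterval corresponds exactly to the restriction of $\hat\Y$, so $\X_{s,t}=\Psi^*_H\hat\X_{s,t}$ depends only on $\Y|_{[u,v]}$ with $s\le u\le v\le t$; moreover $\X_{s,t}=\Psi^*_H\hat\X_{s,t}\in\Psi^*_H G(A)=\hat G(A)$ by Corollary~\ref{prop:hoffmann_iso_lie}(i). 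This is exactly the content of~\eqref{eq:qsig_sig}. The characterisation of good sqgrms then follows just as Lemma~\ref{lem:good} did, by unwinding that $q\mathrm{MinExt}$ preserves the diagonal derivative. The main obstacle — and really the only nonroutine point — is the commutation $\Psi^*_H\circ\proj_{N,\omega}=\proj_{N,\omega}\circ\Psi^*_H$ and, relatedly, the fact that the minimal extension construction commutes with the (weighted) truncation maps; both hinge on $\Phi^*_H,\Psi^*_H$ being \emph{graded} for the weighted length (Proposition~\ref{prop:dual}), which is precisely why the weight $\omega$ was required to be $\{\,,\}$-compatible.
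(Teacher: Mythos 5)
Your proposal is correct and follows essentially the same route as the paper: transfer the statement through the Hoffman isomorphism, apply the (weighted) Fundamental Theorem of sgrm to $\Phi^*_H\Y$, pull back with $\Psi^*_H$, and use the gradedness of $\Phi^*_H,\Psi^*_H$ together with the uniqueness in the geometric case to get minimality, uniqueness and the identities \eqref{eq:qsig_sig}. Your write-up is somewhat more explicit than the paper's (notably on the commutation with truncation and the locality of the signature), but the argument is the same.
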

\begin{remark}
The existence and uniqueness of a minimal extension for sqgrg hold  on general grounds thanks to  properties of Hopf algebras, see Theorem \ref{fundthmSGRPH}. However, Hoffman isomorphism gives us in \eqref{eq:qsig_sig} an explicit solution of the minimal extension in terms of the minimal extension related to a geometric rough path.
\end{remark}
\begin{proof}
Thanks to Theorem \ref{thm:fund_quasi_geo}, the map $\hat{\Y}=\Phi^*_H\Y$  is a weighted $N$-sgrp. Since the proof of Theorem \ref{fundthmSGRP} applies to any alphabet $A$ and any weight on letters $\omega$, there exists a unique minimal extension $\hat{\X}$ of $\hat{\Y}$. Beyond the fact that the function $\X:=\Psi_H^*\hat{\X}$ is  a quasi-geometric rough path, its diagonal derivative satisfies
\[\dot{\X}_{s,s} = \Psi_H^*\dot{\hat{\X}}_{s,s} =\Psi_H^*\dot{\hat{\Y}}_{s,s}=\Psi_H^*\Phi_H^*\dot{\Y}_{s,s} = \dot{\Y}_{s,s}\,.\]
The uniqueness of $\hat{\X}$ follows from the uniqueness of the minimal extension in the geometric case and the isomorphism properties of $\Psi^*_H$ and $\Phi^*_H$. From this uniqueness we deduce also the identities \eqref{eq:qsig_sig} straightforwardly.
\end{proof}

In the same way as for geometric rough paths, the notion of smooth quasi-geometric rough path is consistent with its  equivalent $\gamma$-H\"older version as introduced in \cite{Bel2020a}. The proof of the following Corolllary is left to the reader.
\begin{corollary}
Every $N$-sqgrm $\X$ is a $1/N$-Hölder quasi-geometric rough path and its minimal extension $\X$ coincides with the lift of $\X$, as constructed in  \cite[Prop.~3.9]{Bel2020a}.
\end{corollary}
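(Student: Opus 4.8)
The plan is to reduce both assertions to their already-established geometric counterparts via the graded Hoffman isomorphism, and then quote Section~\ref{sec:sgrp}. The structural fact I would rely on is that, by Theorem~\ref{thm:hoffmann_iso} and Proposition~\ref{prop:dual}, $\Phi_H^*$ and $\Psi_H^*$ are \emph{graded} automorphisms of $(T_\omega((A)),\otimes)$ for the weighted length: for every word $w$, both $\Phi_H^*w$ and $\Psi_H^*w$ are finite linear combinations of words of the \emph{same} weighted length $\norm{w}$. Hence these maps preserve any estimate of weighted homogeneous type: if $\abs{\langle\mathbf{Z}_{s,t},w\rangle}\lesssim\abs{t-s}^{\norm{w}/N}$ holds for all $w$, then so does the analogous bound for $\Phi_H^*\mathbf{Z}$ and $\Psi_H^*\mathbf{Z}$. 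I would also use that the proof of Proposition~\ref{prop:smooth_vs_normal} applies, with $\abs{w}$ replaced by the weighted length $\norm{w}$, over an arbitrary alphabet equipped with a compatible weight (as was already noted for Theorem~\ref{fundthmSGRP}), i.e.\ for weighted $N$-sgrms.

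For the H\"older regularity, I would first apply Theorem~\ref{thm:fund_quasi_geo}: $\hat\X:=\Phi_H^*\X$ is a weighted $N$-sgrm over $A$. The proof of Proposition~\ref{prop:smooth_vs_normal}(i) then yields $\abs{\langle\hat\X_{s,t},w\rangle}\lesssim\abs{t-s}\lesssim\abs{t-s}^{\norm{w}/N}$ uniformly in $s,t$, for every word with $1\le\norm{w}\le N$ (this uses only smoothness of $(s,t)\mapsto\langle\hat\X_{s,t},w\rangle$, the vanishing $\langle\hat\X_{t,t},w\rangle=0$, and $\norm{w}\le N$). Writing $\langle\X_{s,t},w\rangle=\langle\Psi_H^*\hat\X_{s,t},w\rangle=\langle\hat\X_{s,t},\Psi_H w\rangle$ and using gradedness of $\Psi_H$, the same bound transfers to $\X$. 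Since the quasi-shuffle character relation and Chen's relation are already part of the definition of an $N$-sqgrm, this is exactly the definition of a $1/N$-H\"older quasi-geometric rough path in the sense of \cite{Bel2020a}.

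For the identification of the lift, I would start from the explicit formula \eqref{eq:qsig_sig}, $q\mathrm{MinExt}(\X)=\Psi_H^*\,\mathrm{MinExt}(\hat\X)$, and invoke Proposition~\ref{prop:smooth_vs_normal}(ii): $\mathrm{MinExt}(\hat\X)$ is the Lyons lift of the $1/N$-H\"older weakly geometric rough path $\hat\X$, and as such satisfies the weighted homogeneous $1/N$-H\"older bound $\abs{\langle\mathrm{MinExt}(\hat\X)_{s,t},w\rangle}\lesssim\abs{t-s}^{\norm{w}/N}$ at \emph{every} level. Transferring this through the graded map $\Psi_H^*$ shows that $q\mathrm{MinExt}(\X)$ is a $1/N$-H\"older quasi-geometric rough path extending $\X$. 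It then remains to match it with the object produced by \cite[Prop.~3.9]{Bel2020a}: if that lift is defined as $\Psi_H^*$ of the geometric Lyons lift this is immediate, and otherwise I would appeal to uniqueness of the $1/N$-H\"older lift --- equivalently, to uniqueness in Lyons' extension theorem for a level-$N$, $1/N$-H\"older rough path, transported through the Hoffman isomorphism --- to conclude that the two $1/N$-H\"older quasi-geometric extensions of $\X$ coincide.

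The main obstacle is not analytic: every sewing step has been delegated to Proposition~\ref{prop:smooth_vs_normal} and to \cite{Bel2020a}. What must be checked carefully is the bookkeeping --- that the weighted homogeneous norm underlying the notion of $1/N$-H\"older quasi-geometric rough path and its lift in \cite{Bel2020a} is exactly the one intertwined by $\Phi_H^*$ and $\Psi_H^*$ --- so that the geometric and quasi-geometric lifts genuinely correspond under the Hoffman isomorphism. Given Theorem~\ref{thm:fund_quasi_geo} and \eqref{eq:qsig_sig}, this is routine, which is why the statement is left to the reader.
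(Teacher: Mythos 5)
Your proposal is correct and follows exactly the route the paper intends (the proof is explicitly left to the reader): transfer via the graded Hoffman isomorphism to a weighted geometric rough path using Theorem \ref{thm:fund_quasi_geo}, obtain the homogeneous H\"older bounds as in Proposition \ref{prop:smooth_vs_normal}, and identify the lift through the identity \eqref{eq:qsig_sig} together with uniqueness of the $1/N$-H\"older (quasi-geometric) Lyons-type extension. The only cosmetic remark is that the bound $\abs{\langle\X_{s,t},w\rangle}\lesssim\abs{t-s}\lesssim\abs{t-s}^{\norm{w}/N}$ can be read off directly from smoothness and vanishing on the diagonal of the sqgrm itself, so the detour through $\hat\X$ is not needed for that step, though it is harmless.
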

\begin{remark}\label{rk-weakly}
Even if in the literature there is no difference between $\gamma$-H\"older quasi-geometric rough paths and weakly $\gamma$-H\"older quasi-geometric rough paths, we can easily adapt the techniques in \cite{frizbook}  to introduce properly these concepts and to prove that $N$-sqgrm $\hat{\X}$ are weakly $1/N$-H\"older quasi-geometric rough paths and they are dense in the set of $\gamma$-H\"older quasi-geometric rough paths for  $1/\gamma \in (N,N+1)$. 
\end{remark}

\label{sec:srplus_quasi}
The existence of a minimal extension allows two define a vector space structure over sqgrms and weighted sgrps.  Indeed given two  smooth quasi-geometric rough models $\X,\Y:\,[0,T]^2\to\hat{G}(A)$ and $\lambda\in \mathbb{R}$, we define their canonical sum $\X\srplus\Y$  and their scalar multiplication $\lambda\srscalar\X$ like in Definition \ref{def:sum_and_scalar_multiplication}.
Since the operation of taking the diagonal derivative of a sqgrm commutes with the linear maps $\Phi_H^*$ and $\Psi_H^*$, from Proposition \ref{prop:minimal_sum} we deduce the following corollary.
\begin{corollary}\label{cor:srplus_quasi}
 Let $\X,\Y$ be smooth quasi-geometric rough paths/models. Then the map  $(s,t)\to \X_{s,t}\otimes\Y_{s,t}$ and $\X\srplus\Y$ satisfy the same relations in \eqref{eq:prop:sum} and  \eqref{eq:prop:sum2} respectively. Moreover $\Phi_H^*$ is a linear isomorphism with inverse $\Psi_H^*$ between the vector space of sqgrms and weighted sgrms endowed with $\srplus$ and $\srscalar$.
\end{corollary}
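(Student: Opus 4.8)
The plan is to prove everything by \emph{transport along the Hoffman isomorphism}, so that the statement becomes a corollary of the facts already established for weighted geometric rough models together with the algebraic properties of $\Phi_H^*,\Psi_H^*$. The tools I would use are: by Proposition~\ref{prop:dual}, $\Phi_H^*$ and $\Psi_H^*$ are mutually inverse \emph{graded} automorphisms of the algebra $(T_\omega((A)),\otimes)$; being independent of $t$, they commute with $\partial_t$; by Corollary~\ref{prop:hoffmann_iso_lie} they restrict to mutually inverse Lie algebra isomorphisms between $\hat{\cL}((A))$ and $\cL((A))$; and by Theorem~\ref{thm:fund_quasi_geo} they interchange sqgrms (resp.\ sqgrps) with weighted sgrms (resp.\ weighted sgrps). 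Throughout I write $\hat\X:=\Phi_H^*\X$, $\hat\Y:=\Phi_H^*\Y$.

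For the ``moreover'' part I would argue as follows. A sqgrm is determined by its diagonal derivative $s\mapsto\dot\X_{s,s}\in\hat{\cL}((A))$ via Cartan development (modulo a starting point for the associated path, irrelevant here), and under this identification $\srplus$ and $\srscalar$ become pointwise addition and scalar multiplication of smooth $\hat{\cL}((A))$-valued paths; hence the sqgrms form a vector space, and likewise weighted sgrms with $\cL((A))$ in place of $\hat{\cL}((A))$. Since $\Phi_H^*$ is an algebra morphism that fixes $\1$ and commutes with $\partial_t$, applying it to the defining ODE $\dot\Z_t=\Z_t\otimes(\dot\X_{t,t}+\dot\Y_{t,t})$, $\Z_0=\1$, turns it into the Cartan-development ODE for $\dot{\hat\X}_{t,t}+\dot{\hat\Y}_{t,t}$ with initial value $\1$; by uniqueness $\Phi_H^*(\X\srplus\Y)=\hat\X\srplus\hat\Y$, and the identical computation gives $\Phi_H^*(\lambda\srscalar\X)=\lambda\srscalar\hat\X$. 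Combined with Theorem~\ref{thm:fund_quasi_geo}, this shows $\Phi_H^*$ is a bijection from sqgrms onto weighted sgrms, with inverse $\Psi_H^*$, intertwining $\srplus$ and $\srscalar$ — i.e.\ a linear isomorphism of vector spaces.

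For the relations, I would first note that the weighted-geometric analogue of Proposition~\ref{prop:sum} (equivalently the general Proposition~\ref{prop:minimal_sum}) holds with the same proof for an arbitrary alphabet $A$ and weight $\omega$, so $(s,t)\mapsto\hat\X_{s,t}\otimes\hat\Y_{s,t}$ and $\hat\X\srplus\hat\Y$ satisfy \eqref{eq:prop:sum} and \eqref{eq:prop:sum2}. I then apply $\Psi_H^*$: since it is an algebra morphism fixing $\1$ (and the counit $\1^*$), it sends $\hat\X_{s,t}\otimes\hat\Y_{s,t}\mapsto\X_{s,t}\otimes\Y_{s,t}$, $\hat\Y_{s,t}\otimes\hat\X_{s,t}\mapsto\Y_{s,t}\otimes\X_{s,t}$, $\hat\X_{s,t}\mapsto\X_{s,t}$, and, by the previous paragraph, $(\hat\X\srplus\hat\Y)_{s,t}\mapsto(\X\srplus\Y)_{s,t}$. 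The only point requiring care is that $\Psi_H^*$ preserves the remainder class $\{(s,t)\mapsto P_{s,t}\in T_\omega((A)):\langle P_{s,t},x\rangle=o(|t-s|)\text{ for all }x\in T_\omega(A)\}$; this holds because $\langle\Psi_H^*P_{s,t},x\rangle=\langle P_{s,t},\Psi_H x\rangle$ and $\Psi_H x$ is a \emph{finite} linear combination of words, so the pairing is a finite sum of $o(|t-s|)$ terms. Applying $\Psi_H^*$ to the displayed relations for $\hat\X,\hat\Y$ then yields \eqref{eq:prop:sum} and \eqref{eq:prop:sum2} for $\X,\Y$; the case of paths follows by passing to increments.

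The main — indeed essentially the only — obstacle is this last bookkeeping step on remainders: one must be sure that $\Phi_H^*,\Psi_H^*$ and their adjoints send tensor polynomials to finite combinations of tensor polynomials, which is exactly the gradedness (by weighted length) supplied by Theorem~\ref{thm:hoffmann_iso} and Proposition~\ref{prop:dual}. A secondary, routine point is to record that the weighted-geometric versions of Theorem~\ref{fundthmSGRP} and Proposition~\ref{prop:sum} hold verbatim, which the text has already flagged; everything else is a mechanical transport.
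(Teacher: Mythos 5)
Your argument is correct, and for the ``moreover'' part it coincides with the paper's: the diagonal derivative commutes with $\Phi_H^*,\Psi_H^*$, so applying $\Phi_H^*$ to the Cartan-development ODE identifies $\Phi_H^*(\X\srplus\Y)=\hat\X\srplus\hat\Y$ and $\Phi_H^*(\lambda\srscalar\X)=\lambda\srscalar\hat\X$, and Theorem~\ref{thm:fund_quasi_geo} then gives the linear isomorphism. For the relations \eqref{eq:prop:sum}--\eqref{eq:prop:sum2}, however, your route differs from the intended one. The paper deduces them by applying Proposition~\ref{prop:minimal_sum} \emph{directly} to the quasi-shuffle Hopf algebra $(T_\omega(A),\qshuffle,\Delta)$: this is a connected, graded, locally finite, commutative Hopf algebra whose dual product $\star$ (dual to deconcatenation) is exactly $\otimes$, so sqgrms are s$\mathcal{H}$rms for this $\mathcal{H}$ and \eqref{eq:prop:sum2H}--\eqref{eq:prop:sumH} specialize verbatim, with no transport and no remainder bookkeeping. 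You instead apply the (weighted) shuffle-side statement and pull it back through $\Psi_H^*$; this works, and your key check --- that $\langle\Psi_H^*P_{s,t},x\rangle=\langle P_{s,t},\Psi_H x\rangle$ with $\Psi_H x$ a finite linear combination of words, so the $o(|t-s|)$ class is preserved by gradedness --- is exactly the point that makes the transport legitimate. The trade-off: the direct specialization is shorter and exhibits the corollary as a pure instance of the Hopf-algebraic result, whereas your transport argument has the mild virtue of making explicit that the canonical sums on the two sides correspond under Hoffman (which is anyway needed for the ``moreover'' part), at the cost of the extra adjoint/remainder verification.
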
 
\begin{remark}\label{min_quasi}
The sum of smooth quasi-geometric rough paths/models can be used to define a notion of minimal coupling like in \eqref{def:minimal_coupling}. In this case, the elements of the sum are smooth geometric rough paths/models $\X^i:[0,T]\to\hat{G}(A_i)$,  where  $A_i,    i=1, \cdots, m$   form  a partition of $A$ where each  $A_i$ is  closed under the commutative bracket $\{\,,\}$. 
\end{remark}

\subsection{Differential equations driven by SQGRP}\label{sec:alg_ren_qgeo} 
 For renormalisation purposes in the context of differential equations, we introduce the notion of differential equation driven by a smooth quasi-geometric rough path. Since we replaced our initial directions on $\mathbb{R}^d$ with an alphabet $A$, we assume the existence of a wider collection of smooth vector fields $\left( f_a\colon a\in A \right)\subset \Vect^\infty(\bR^e)$. Moreover, 
 in the same way as  $\{1, \cdots, d\}\subset A$ we   further fix a family   $\left( f_1, \cdots, f_d\right)\subset\left( f_a\colon a\in A \right)$. As before, we can apply  the operation $\vartriangleright$ to the elements $\left( f_a\colon a\in A \right)$ obtaining again a linear  map $f\colon T_{\omega}(A)\to \Vect^\infty(\bR^e)$ defined by \eqref{def_vector_field}. This map induces a  morphism of Lie algebras $f\colon \cL(A)\to\Vect^\infty(\bR^e)$, which is uniquely determined by his values on $A$. By means of the adjoint Hoffman exponential $\Phi^*_H$, we can uniquely define a map similar to $f$, which preserves the quasi-shuffle Lie polynomials.
\begin{proposition}
Given a family of vector fields $\left( f_a\colon a\in A \right)\subset\Vect^\infty(\bR^e)$, there exists a map $\hat{f}\colon T_{\omega}(A)\to \Vect^\infty(\bR^e)$ whose restriction on $\hat{\cL}(A)$ is the unique morphism of Lie algebras $\hat{f}\colon \hat{\cL}(A)\to \Vect^\infty(\bR^e)$ which satisfies $\hat{f}(\Psi^*_H(a))=f_a$ for any $a\in A$. This map is explicitly given by $\hat{f}= f_{\Phi^*_H}$.
\end{proposition}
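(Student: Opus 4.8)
The plan is to take $\hat f$ to be $f$ precomposed with the adjoint Hoffman exponential $\Phi^*_H$ and to transport the Lie-morphism property of $f$ along this isomorphism. First I would recall from the discussion preceding Proposition~\ref{prop:dual} (cf.\ \eqref{defn_exp_dual1}--\eqref{defn_exp_dual_2}) that $\Phi^*_H$ restricts to a graded automorphism of the algebra $(T_\omega(A),\otimes)$ with inverse $\Psi^*_H$: the weight compatibility \eqref{equ:bracket2}, together with $\omega(A)\subset\bN^*$ and local finiteness of $A$, guarantees that each $\Phi^*_H(a)$ is a \emph{finite} sum of words of weighted length $\omega(a)$, so $\Phi^*_H$ maps the polynomial space $T_\omega(A)$ into itself. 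Being an algebra automorphism, $\Phi^*_H$ preserves the commutator bracket of $\otimes$; since $\hat{\cL}(A)=\Psi^*_H\,\cL(A)$ by Corollary~\ref{prop:hoffmann_iso_lie}(ii), it restricts further to a Lie algebra isomorphism $\Phi^*_H\colon\hat{\cL}(A)\to\cL(A)$ sending $\Psi^*_H(a)\mapsto a$ for each $a\in A$.

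Next I would define $\hat f:=f_{\Phi^*_H}$, that is, the linear map $T_\omega(A)\to\Vect^\infty(\bR^e)$ given by $x\mapsto f_{\Phi^*_H(x)}=\langle f(\cdot),\Phi^*_H(x)\rangle$ in the notation of \eqref{map_f}; it is linear, being a composition of the two linear maps $\Phi^*_H$ and $f$. Restricted to $\hat{\cL}(A)$ it equals the composition of the Lie algebra isomorphism $\Phi^*_H\colon\hat{\cL}(A)\to\cL(A)$ with the Lie algebra morphism $f\colon\cL(A)\to\Vect^\infty(\bR^e)$, hence is itself a morphism of Lie algebras; and on generators it satisfies $\hat f(\Psi^*_H(a))=f_{\Phi^*_H\Psi^*_H(a)}=f_a$ for every $a\in A$. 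This settles the existence of the map together with the claimed property of its restriction.

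For uniqueness I would invoke Corollary~\ref{prop:hoffmann_iso_lie}(ii) once more: $\hat{\cL}(A)$ is the free Lie algebra over $\mathfrak A=\{\Psi^*_H(a):a\in A\}$, so any morphism of Lie algebras out of $\hat{\cL}(A)$ is determined by the images of the generators $\Psi^*_H(a)$; hence a morphism sending $\Psi^*_H(a)\mapsto f_a$ necessarily coincides with $\hat f|_{\hat{\cL}(A)}$. There is no serious obstacle in this argument; the only point deserving attention is the finiteness/grading check ensuring $\Phi^*_H$ acts on the polynomial algebra $T_\omega(A)$ (so that $\hat f$ is genuinely defined on all of $T_\omega(A)$, not just on the series space $T_\omega((A))$), which is precisely where the weight hypothesis \eqref{equ:bracket2} is used.
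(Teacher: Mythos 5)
Your proposal is correct and follows essentially the same route as the paper: existence by setting $\hat f=f_{\Phi^*_H}$ and transporting the Lie-morphism property of $f$ along the Hoffman isomorphism, and uniqueness from the freeness of $\hat{\cL}(A)$ over $\{\Psi^*_H(a)\colon a\in A\}$ as recorded in Corollary~\ref{prop:hoffmann_iso_lie}. Your extra remark that $\Phi^*_H$ preserves the polynomial space $T_\omega(A)$ (via the weight hypothesis) only makes explicit a point the paper leaves implicit.
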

\begin{proof}
By definition of $\hat{\cL}(A)$, The map $f_{\Phi^*_H}$ satisfies clearly the properties of the statement. Moreover, thanks to the free structure of $\hat{\cL}(A)$ described in Corollary \ref{prop:hoffmann_iso_lie}, the  condition  $\hat{f}(\Psi^*_H(a))=f_a$   uniquely determines a morphism of Lie algebras $\hat{f}\colon \hat{\cL}(A)\to \Vect^\infty(\bR^e)$. 
\end{proof}
Using the explicit definitions of $f$ and $\Phi^*_H$, we can then describe $\hat{f}$ as the unique linear map satisfying the conditions 
\begin{equation}\label{vec_field_2}
\hat{f}_w=(\hat{f}_{a_1}\vartriangleright( \ldots \vartriangleright(\hat{f}_{a_{n-1}}\vartriangleright \hat{f}_{a_n})\ldots)\,, \quad \hat{f}_a=\sum_{n\geq 1}\sum_{\{a_1\cdots a_n\}=a}\frac{1}{n!}f_{a_1\cdots a_n}\,,
\end{equation}
for any words $w=a_1\cdots a_n$ and $a\in A$. We are now ready to introduce the notion of differential equation in the quasi-geometric context.
\begin{definition}\label{def:QDE}
Let $\X$ be $N$-sqgrm or a good sgrm such that $\dot{\X}_{s,s}\in\cL^N_{\omega}(A)$.  We say that a smooth path $Y\colon [0,T]\to \bR^e$ is the solution of a differential equation driven by $\X$  and the vector fields $\left( f_a\colon a\in A \right)$ if it satisfies 
\begin{equation}\label{eq:qRDE2}
   \dot{Y}_s = \langle \hat{f} (Y_s), \dot{\X}_{s,s}\rangle_N =\sum_{\norm{w}\le N} 
   \hat{f}_w (Y_s)\langle  
   \dot{\X}_{s,s},w 
   \rangle\,.
\end{equation}
where $\hat{f}:\mathbf{x}\mapsto \hat{f}_{\mathbf{x}}$ is given in \eqref{vec_field_2}. We will refer to equation \eqref{eq:qRDE2}   as
\begin{equation}\label{eq:qRDE1}
dY=\hat{f}(Y)d\mathbf{X}\,.
\end{equation} 
\end{definition}
\begin{remark}
This definition leaves the open problem for future work how to canonically define a notion of differential equation, when only given  $d$ vector fields $f_1,\dots,f_d$, without the need for an arbitrary extension to $(f_a: a\in A)$, at best in a way that it is consistent with the geometric and branched $d$ dimensional setting.
\end{remark}

Combining the results in Proposition \ref{prop:euler}, Lemma \ref{lem:ONB}  and  Proposition \ref{prop:dual}, we can summarise three equivalent characterisations of \eqref{eq:qRDE1} in the quasi-geometric context.
\begin{proposition}\label{lem:quasi_ONB}
Given a path  $Y\colon [0,T]\to \bR^e$ a $N$-sqgrp $\mathbf{X}$ and a family of vector fields $\left( f_a\colon a\in A \right)$, one has the following equivalent conditions:
\begin{enumerate}[i)]
\item $Y$ solves $dY=\hat{f}(Y)d\mathbf{X}$.
\item $Y$ solves $dY = f (Y) d \hat{\X}$,  where $\hat{\X}=\Phi_H^* \X$ is the $N$-level weighted geometric rough path obtained via Theorem \ref{thm:fund_quasi_geo}.
\item For all $s,t\in[0,T]$
\begin{equation}\label{eq:euler_RDE2}
Y_t-Y_s=\sum_{1\leq \Vert w\Vert\leq N}\hat{f}_w(Y_s)\langle \X_{s,t},w\rangle+r_{s,t}\,,
\end{equation}
where $\X$ is the smooth quasi-geometric rough model associated to $\X$ and $r$ is a remainder such that $r_{s,t}= o(|t-s|)$ as $t\to s$.
\item For any  $\hat{\mathfrak{B}}^N$ orthonormal basis of $\hat{\cL}^N_{\omega}(A) \subset T^N_{\omega}(A)$ one has
$$
\dot{Y}_t  = \sum_{\mathfrak{u} \in  \hat{\mathfrak{B}}^N} \hat{f}_{\mathfrak{u}}  (Y_s)\langle     \dot{\X}_{t,t},\mathfrak{u}\rangle\,.
$$
\end{enumerate}
The results $i)$, $ii)$ and $iii)$ hold also if $\X$ is a good sqgrm.
\end{proposition}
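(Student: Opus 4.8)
The plan is to take condition (ii) as the pivot and relate it to (i), (iii), (iv) by transporting everything through the adjoint Hoffman maps. First I would record the elementary bookkeeping fact that feeds all three equivalences: from the identity $\hat f=f_{\Phi^*_H}$ together with the adjointness $\langle\Phi^*_H\mathbf{x},u\rangle=\langle\mathbf{x},\Phi_H u\rangle$ of Proposition \ref{prop:dual}, expanding $\hat f_v=f_{\Phi^*_H v}=\sum_u\langle v,\Phi_H u\rangle f_u$ in the word basis and reindexing yields, for every $y\in\bR^e$ and every $\mathbf{x}\in T^N_\omega(A)$,
\[
\langle\hat f(y),\mathbf{x}\rangle_N=\langle f(y),\Phi^*_H\mathbf{x}\rangle_N,
\qquad\text{equivalently}\qquad
\sum_{\|w\|\le N}f_w(y)\langle\mathbf{x},\Phi_H w\rangle=\sum_{\|v\|\le N}\hat f_v(y)\langle\mathbf{x},v\rangle .
\]
Here I would stress that $\Phi_H$ is a graded map for the weighted length, so the level-$N$ truncations on the two sides genuinely correspond; this grading fact is the one small point that needs care.

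Next I would prove (i)$\Leftrightarrow$(ii). By Theorem \ref{thm:fund_quasi_geo}, $\hat\X:=\Phi^*_H\X$ is a weighted $N$-sgrp (resp. good sgrm), and since $\Phi^*_H$ is linear it commutes with the time derivative, so $\dot{\hat\X}_{s,s}=\Phi^*_H\dot\X_{s,s}$. Substituting $\mathbf{x}=\dot\X_{s,s}$ into the identity above turns the defining equation of Definition \ref{def:QDE}, $\dot Y_s=\langle\hat f(Y_s),\dot\X_{s,s}\rangle_N$, into $\dot Y_s=\langle f(Y_s),\dot{\hat\X}_{s,s}\rangle_N$, which is exactly the equation of Definition \ref{def:DEsgrp} driven by $\hat\X$; hence (i)$\Leftrightarrow$(ii). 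For (ii)$\Leftrightarrow$(iii) I would invoke Proposition \ref{prop:euler}, which applies mutatis mutandis to the weighted geometric rough model $\hat\X$ (its proof uses only smoothness, Chen's relation and Taylor expansion, cf. the remark after Theorem \ref{fundthmSGRP} and Remark \ref{rk-weakly}): (ii) holds iff $Y_t-Y_s=\sum_{1\le\|w\|\le N}f_w(Y_s)\langle\hat\X_{s,t},w\rangle+r_{s,t}$ with $r_{s,t}=o(|t-s|)$. Using $\langle\hat\X_{s,t},w\rangle=\langle\X_{s,t},\Phi_H w\rangle$ and the reindexing identity with $\mathbf{x}=\X_{s,t}$, this is precisely (iii).

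For (i)$\Leftrightarrow$(iv) I would transcribe the proof of Lemma \ref{lem:ONB} into the quasi-geometric setting. The diagonal derivative of an $N$-sqgrm lies in $\hat{\cL}^N_\omega(A)$: indeed $\dot\X_{s,s}=\Psi^*_H\dot{\hat\X}_{s,s}$ with $\dot{\hat\X}_{s,s}\in\cL^N_\omega(A)$ and $\Psi^*_H\cL^N_\omega(A)=\hat{\cL}^N_\omega(A)$ by Corollary \ref{prop:hoffmann_iso_lie}. Completing the orthonormal basis $\hat{\mathfrak{B}}^N$ of $\hat{\cL}^N_\omega(A)$ to an orthonormal basis $\hat{\mathfrak{B}}^N\cup\hat{\mathfrak{B}}^{\perp}$ of $T^N_\omega(A)$, the inner product $\langle\hat f(Y_s),\dot\X_{s,s}\rangle_N$ in Definition \ref{def:QDE} may be evaluated in this basis, and the contributions indexed by $\hat{\mathfrak{B}}^{\perp}\subset(\hat{\cL}^N_\omega(A))^{\perp}$ vanish against $\dot\X_{s,s}\in\hat{\cL}^N_\omega(A)$, leaving exactly the sum in (iv); conversely that sum equals $\langle\hat f(Y_s),\dot\X_{s,s}\rangle_N$, giving (i). Each step above used $\X$ only through $\dot\X_{s,s}\in\hat{\cL}^N_\omega(A)$, which by definition also holds when $\X$ is a good sqgrm, so the closing assertion of the proposition follows without change.

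I do not expect a genuine obstacle here: the proposition is an assembly of Theorem \ref{thm:fund_quasi_geo}, Proposition \ref{prop:euler}, Lemma \ref{lem:ONB} and the adjointness in Proposition \ref{prop:dual}. The only thing requiring attention is the consistent bookkeeping of $\Phi_H$ versus its adjoint $\Phi^*_H$ — in particular checking that $\hat f=f_{\Phi^*_H}$ is exactly the transfer map making the reindexing identities hold — and the use of the grading of $\Phi_H$ by weighted length to guarantee that all the level-$N$ truncations on the geometric and quasi-geometric sides line up.
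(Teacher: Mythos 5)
Your overall architecture (transport through the Hoffman isomorphism, plus the orthogonal--complement argument of Lemma \ref{lem:ONB}) is essentially the paper's, but your central ``bookkeeping fact'' is false as stated, and you use it outside its domain of validity. The map $\hat f$ entering Definition \ref{def:QDE} and condition (iii) is the one defined by \eqref{vec_field_2}: $\hat f_a=f_{\Phi^*_H(a)}$ on letters and then $\hat f_w=\hat f_{a_1}\vartriangleright(\cdots\vartriangleright\hat f_{a_n})$ on words. The identity $\hat f_{\mathfrak x}=f_{\Phi^*_H\mathfrak x}$ does hold on $\hat{\cL}(A)$ (two Lie algebra morphisms agreeing on the generators $\Psi^*_H(a)$ --- this is exactly the identity \eqref{eq:Lie_Ito_strato} that the paper isolates), but not on arbitrary words: the relation $f_u\vartriangleright f_x=f_{u\otimes x}$ requires $u$ to be a Lie polynomial, and $\Phi^*_H(a)$ is not Lie once $\omega(a)\ge 2$. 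Concretely, for $A=\bA^1_2$ with letters $1$ and $\{11\}$ and the word $w=\{11\}\,1$, \eqref{vec_field_2} gives $\hat f_w=f_{\{11\}}\vartriangleright f_1+\tfrac12\,(f_1\vartriangleright f_1)\vartriangleright f_1$, whereas $f_{\Phi^*_H(w)}=f_{\{11\}}\vartriangleright f_1+\tfrac12\, f_1\vartriangleright(f_1\vartriangleright f_1)$; these differ (take $e=1$, $f_1=x^2\partial_x$), since $\vartriangleright$ is pre-Lie but not associative. Hence your claimed identity $\langle\hat f(y),\mathbf{x}\rangle_N=\langle f(y),\Phi^*_H\mathbf{x}\rangle_N$ for \emph{every} $\mathbf{x}\in T^N_{\omega}(A)$ is wrong; it is valid only for $\mathbf{x}\in\hat{\cL}^N_{\omega}(A)$. (The paper's own line ``$\hat f=f_{\Phi^*_H}$'' must be read as the restriction to $\hat{\cL}(A)$; it is only ever used there.)

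This does no harm to your (i)$\Leftrightarrow$(ii) and (i)$\Leftrightarrow$(iv) steps, since there you only pair against $\dot\X_{s,s}\in\hat{\cL}^N_{\omega}(A)$, where the identity is true; these steps coincide with the paper's. But it does break your (ii)$\Leftrightarrow$(iii) step as written: you apply the identity with $\mathbf{x}=\X_{s,t}$, a group-like and hence non-Lie element, and assert that the geometric Euler expansion $\sum_w f_w(Y_s)\langle\hat\X_{s,t},w\rangle$ ``is precisely'' the expansion in (iii); it is not --- the two expansions differ by $\sum_{1\le\Vert w\Vert\le N}\bigl(f_{\Phi^*_H w}-\hat f_w\bigr)(Y_s)\langle\X_{s,t},w\rangle$, which is nonzero in general. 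The repair is straightforward: either prove (i)$\Leftrightarrow$(iii) directly by the Taylor argument of Proposition \ref{prop:euler}, verbatim with $\hat f_w$ in place of $f_w$ (this is the paper's route), or keep your route and note that the discrepancy is $o(|t-s|)$, because by Taylor its first-order coefficient is $\sum_w\bigl(f_{\Phi^*_H w}-\hat f_w\bigr)(Y_s)\langle\dot\X_{s,s},w\rangle$, which vanishes precisely because $\dot\X_{s,s}$ is a quasi-shuffle Lie element. With that correction your proof is complete and matches the paper's argument.
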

\begin{proof}
The equivalence $i)\Leftrightarrow iii)$ follows in the same way as in Proposition \ref{prop:euler} and the equivalence $i)\Leftrightarrow iv)$ is a consequence of the property $\dot{\X}_{s,s}\in \hat{\cL}_{\omega}^N(A)$. To prove the last equivalence $i)\Leftrightarrow ii)$  it is sufficient to prove the identity
\begin{equation}\label{eq:Lie_Ito_strato}
\sum_{\mathfrak{u} \in  \hat{\mathfrak{B}}^N} \hat{f}_{\mathfrak{u}}  (y)\langle     \mathfrak{x},\mathfrak{u}\rangle= \sum_{\mathfrak{v} \in  \mathfrak{B}^N} f_{\mathfrak{v}}  (y)\langle     \Phi^*_H\mathfrak{x},\mathfrak{v}\rangle
\end{equation}
for any  $\mathfrak{x}\in \hat{\mathcal{L}}_{\omega}^N(A)$, $y\in \mathbb{R}^e$ and any orthonormal basis $\mathfrak{B}^N$ of $\mathcal{L}_{\omega}^N(A)$. However, by definition of $\hat{f}$ and using the property $\Phi^*_H\mathfrak{x}\in  \mathcal{L}_{\omega}^N(A)$ one has 
\[
\sum_{\mathfrak{u} \in  \hat{\mathfrak{B}}^N} \hat{f}_{\mathfrak{u}}  (y)\langle     \mathfrak{x},\mathfrak{u}\rangle=\hat{f}_{\mathfrak{x}} (y)=  f_{\Phi^*_H(\mathfrak{x})}(y) =\sum_{\mathfrak{v} \in  \mathfrak{B}^N} f_{\mathfrak{v}}  (y)\langle     \Phi^*_H\mathfrak{x},\mathfrak{v}\rangle\,.
\]
The case wit a smooth model follows straightforwardly.
\end{proof} 

\begin{example}\label{Ito_strato}
Let us check how some properties of Proposition \ref{lem:quasi_ONB} appear when we fix $A=\bA_2^d$ from example \ref{exe:bracket}, a sqgrm  $\X$ over  $A$ and a generic family of vector fields $\left( f_1, \cdots, f_d\right)$ extended to $A$ by setting $f_{\{ij\}}\equiv 0$. The differential equation  
\[dY= f(Y_t)d\hat{\X}\]
is equivalent to
\begin{equation}\label{last_check} \dot{Y}_s=  \sum_{|w|\le N} f_w (Y_s)\langle \dot{\hat{\X}}_{s,s},w   \rangle\,,
\end{equation}
where $w$ is a word with values in the alphabet $\{1, \cdots , d\}$. Using  the identity $\hat{\X}:= \Phi^*_H \X$, we can rewrite \eqref{last_check} as
\[ \dot{Y}_s=  \sum_{|w|\le N} f_w (Y_s)\langle \dot{\X}_{s,s},\Phi_Hw   \rangle.\]
It follows from the definition of $\Phi_H$ in \eqref{defn_exp} that it is possible to write down a general combinatorial formula for $\Phi_H(w)$, see \cite[Prop.~4.10]{kurusch15}
\[\Phi(w)=  \sum_{u\in \{w\}}\frac{1}{2^{|w|-|u|}} u\,, \]
where $\{w\}$ consists of the words we can construct from $w$ by successively replacing any neighbouring pairs $ij$ in $w$ by $\{ij\}$. Switching the sum on $w$  and $u$, equation \eqref{last_check}  becomes
\begin{equation}\label{last_check_2}
\dot{Y}_s=   \sum_{|w|\le N}\sum_{u\in \{w\}}\frac{1}{2^{|w|-|u|}}  f_w (Y_s)\langle \dot{\hat{\X}}_{s,s},u  \rangle=\sum_{\|u\|\le N}\sum_{\{w\}_p= u}\frac{1}{2^{|w|-|u|}}  f_w (Y_s)\langle \dot{\hat{\X}}_{s,s},u \rangle\,,
\end{equation}
where the  sum over $\{w\}_p= u $ involves all the ways one can write a word $u$ as $\{w\}_I$ of a word $w$ whose letters are in the alphabet $\{1, \cdots  , d\}$ and $I \in C(|w|)$. The element
\[\sum_{\{w\}_p= u}\frac{1}{2^{|w|-|u|}}  f_w\,, \]
then corresponds to the expression $\hat{f}_u$ for any word $u$. 

Using the shorthand notations $ \langle \dot{\X}_{s,s},a   \rangle=\langle \dot{\hat{\X}}_{s,s},a   \rangle=\dot{X}_s^a$ for any $a\in A$, the expressions \eqref{last_check} and \eqref{last_check_2} in case $N=2$ become respectively
\[\dot{Y}_s=\sum_{i=1}^d f_i (Y_s) \dot{X}_s^i   +\sum_{i,j=1}^d f_{ij} (Y_s)\langle \dot{\hat{\X}}_{s,s},ij   \rangle \] 
and
\[\begin{split}
&\dot{Y}_s=\sum_{i=1}^d \hat{f}_i (Y_s)\dot{X}_s^i   + \sum_{i,j=1}^d \hat{f}_{ij} (Y_s)\langle\dot{\X}_{s,s},ij\rangle+\sum_{i\leq j}^d \hat{f}_{\{ij\}} (Y_s)\dot{X}_s^{\{ij\}} \\& =\sum_{i=1}^d f_i (Y_s)\dot{X}_s^i+ \sum_{i,j=1}^d f_{ij} (Y_s)\langle \dot{\X}_{s,s},ij   \rangle + \frac{1}{2}\left(\sum_{i=1}^d f_{ii}(Y_s)\dot{X}_s^{\{ii\}}+\sum_{i<j}^d (f_{ij}+f_{ji})(Y_s)\dot{X}_s^{\{ij\}} \right)\\& =\sum_{i=1}^d f_i (Y_s)\dot{X}_s^i+ \sum_{i,j=1}^d f_{ij} (Y_s)\langle \dot{\X}_{s,s},ij   \rangle+  \frac{1}{2}\sum_{i,j=1}^d f_{ij} (Y_s)\dot{X}_s^{\{ij\}}\,.
\end{split} \] 
Interpreting $X^i$ and  $X^{\{ij\}}$ as the components of a semimartingale and his quadratic variations, we obtain an algebraic version of the   It\^o-Stratonovich correction among semi-martingales at level of SDEs, see  \cite{kurusch15} for further applications.
\end{example}
\subsection{Algebraic renormalisation of SQGRP}
We now want to adapt the notion of translation discussed for  geometric rough paths to the context of   quasi-geometric rough paths.  Adapting the same arguments in the section \ref{sec:alg_ren_geo}, for any family $(v_a \colon a\in A )\subset \cL ((A))$ we can easily define a translation map  $T_v\colon T_{\omega}((A))\to T_{\omega}((A))$  defined on any alphabet $A$. In the case of a family $(v_a \colon a\in A )\subset \cL (A) $ it follows from the properties of any weight $\omega$ that $T_v$ sends  $T^{N}_{\omega}(A)$ to $T^{M}_{\omega}(A)$ with $M=N\cdot N'$, where $N'$ is the smallest integer such that $v_i\in\mathcal{L}^{N'}_{\omega}(A)$. Passing to the quasi- shuffle context, the natural direction to perform a  translation must  be done along the Lie algebra $\hat{\cL} ((A))$. To define  a proper notion, Corollary  \ref{prop:hoffmann_iso_lie} tells us the set  $\hat{\cL} (A)$  consists of Lie series obtained from the set $\mathfrak{A}= \{\Psi^*_H(a)\colon a\in A\}$. Therefore,  fixing a subset $u = (u_a \colon a\in A )\subset \hat{\cL} ((A))$, there is a unique  morphism of Lie algebras $\hat{T}_{u}\colon \hat{\cL} ((A))\to \hat{\cL} ((A))$ such that
\[\hat{T}_{u}\left(\Psi_H^*(a)\right)=  \Psi_H^*(a) + u_a\,.\]
 Using the translation maps $T_v$ and the maps $\Psi^*_H$, $\Phi^*_H$ we  see that $\hat{T}_{u}$ uniquely extends   to an endomorphism of $T_{\omega}((A))$ with respect to the product $\otimes$.
 
\begin{theorem}\label{thm:transl_quasi}
For any collection $u = (u_a \colon a\in A )\subset \hat{\cL} ((A))$ there exists a unique $\otimes$-endomorphism over $T_{\omega}((A))$ which extends $\hat{T}_{u}\colon  \hat{\cL} ((A))\to \hat{\cL} ((A))$. We call it the  \textbf{quasi-translation map} and we denote it by the same symbol $\hat{T}_{u}$. In particular, one has the explicit relation 
\begin{equation}\label{eq:transl_quasi}
    \hat{T}_{u}= \Psi^*_H T_{\hat{u}}\Phi^*_H\,,
\end{equation}  
where $\hat{u}$ is given by the family  $\hat{u}= (\Phi^*_H(u_a) \colon a\in A )\subset\cL ((A))$. Moreover, if $(u_a \colon a\in A )\subset\hat{\cL} (A) $ $\hat{T}_u$ sends  $T^{N}_{\omega}(A)$ to $T^{M}_{\omega}(A)$ with $M=N\cdot N'$, where $N'$ is the smallest integer such that $v_i\in\hat{\mathcal{L}}^{N'}_{\omega}(A)$.
\end{theorem}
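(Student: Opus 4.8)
The plan is to construct $\hat{T}_u$ by the explicit formula \eqref{eq:transl_quasi}, namely $\hat{T}_u := \Psi_H^* T_{\hat u} \Phi_H^*$ with $\hat u = (\Phi_H^*(u_a): a \in A)$, and then verify the three claimed properties in turn: that this is a $\otimes$-endomorphism of $T_\omega((A))$, that it extends the Lie algebra morphism $\hat T_u$ on $\hat{\cL}((A))$, and that in the Lie polynomial case it has the stated grading behaviour. First I would check that $\hat u$ is a legitimate collection of Lie series: since $\Phi_H^*$ restricts to a Lie algebra isomorphism $\hat{\cL}((A)) \to \cL((A))$ by Proposition \ref{prop:dual} and Corollary \ref{prop:hoffmann_iso_lie}, each $\Phi_H^*(u_a)$ indeed lies in $\cL((A))$, so the translation map $T_{\hat u}$ of Section \ref{sec:alg_ren_geo} (extended to the alphabet $A$ as indicated at the start of this subsection) is well-defined as a $\otimes$-endomorphism of $T_\omega((A))$.

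Next I would establish that $\Psi_H^* T_{\hat u} \Phi_H^*$ is a $\otimes$-endomorphism: this is immediate since $\Phi_H^*$ and $\Psi_H^* = (\Phi_H^*)^{-1}$ are algebra automorphisms of $(T_\omega((A)), \otimes)$ by Proposition \ref{prop:dual}, and $T_{\hat u}$ is an algebra endomorphism by construction, so the composite of three algebra morphisms is one. Then I would check it restricts correctly on $\hat{\cL}((A))$: for $a \in A$, applying $\Psi_H^* T_{\hat u} \Phi_H^*$ to $\Psi_H^*(a)$ gives $\Psi_H^* T_{\hat u}(a) = \Psi_H^*(a + \hat u_a) = \Psi_H^*(a) + \Psi_H^*(\Phi_H^*(u_a)) = \Psi_H^*(a) + u_a$, matching the defining property of $\hat T_u$ on generators; since $\hat{\cL}((A))$ is freely generated by $\{\Psi_H^*(a): a \in A\}$ (Corollary \ref{prop:hoffmann_iso_lie}(ii)) and both maps are Lie morphisms preserving this bracket structure (the commutator of $\otimes$ is preserved by algebra morphisms), they agree on all of $\hat{\cL}((A))$. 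Uniqueness of the $\otimes$-endomorphism extension follows because $T_\omega((A))$ is (topologically) generated as an algebra by the letters $a \in A$, equivalently by $\{\Psi_H^*(a)\}$, so any $\otimes$-endomorphism agreeing with $\hat T_u$ on these generators is determined.

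Finally, for the grading statement when $(u_a: a \in A) \subset \hat{\cL}(A)$: here $\hat u_a = \Phi_H^*(u_a) \in \cL(A)$ (Lie polynomials, by the graded isomorphism property), and if $N'$ is minimal with $u_a \in \hat{\cL}^{N'}_\omega(A)$ for all $a$, then since $\Phi_H^*$ is a graded isomorphism (Theorem \ref{thm:hoffmann_iso}, Proposition \ref{prop:dual}), $N'$ is also minimal with $\hat u_a \in \cL^{N'}_\omega(A)$. The translation $T_{\hat u}$ then maps $T^N_\omega(A)$ into $T^M_\omega(A)$ with $M = N \cdot N'$ by the weighted analogue of the grading remark at the opening of Section \ref{sec:alg_ren_geo} (each letter of weight $\le N$ is sent to a sum of words of weighted length $\le N'$, so words of weighted length $\le N$ go to weighted length $\le N N'$), and composing with the graded maps $\Phi_H^*$, $\Psi_H^*$ preserves this, giving $\hat T_u: T^N_\omega(A) \to T^M_\omega(A)$. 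The main obstacle I anticipate is not any single hard step but rather bookkeeping: one must be careful that the grading argument for $T_{\hat u}$ genuinely uses the compatibility \eqref{equ:bracket2} of $\omega$ with the bracket (so that $\Phi_H^*$ and $\Psi_H^*$ are graded), and that the "smallest integer $N'$" is measured consistently in the $\hat{\cL}^{N'}_\omega$ versus $\cL^{N'}_\omega$ scales — the isomorphism $\Phi_H^*$ being graded is exactly what reconciles the two.
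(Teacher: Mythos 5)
Your proposal is correct and follows essentially the same route as the paper: define $\hat{T}_u$ by the conjugation formula $\Psi_H^*T_{\hat u}\Phi_H^*$, note it is a composite of $\otimes$-morphisms, check it on the generators $\Psi_H^*(a)$ and invoke the freeness of $\hat{\cL}((A))$, obtain uniqueness from the fact that a $\otimes$-endomorphism is determined by its values on the (images of the) letters, and get the grading claim from the gradedness of $\Phi_H^*$ and $\Psi_H^*$. Your uniqueness step phrases this directly via topological generation by the letters, whereas the paper conjugates a competing extension $\Gamma$ back to $\Phi_H^*\Gamma\Psi_H^*$ and identifies it with $T_{\hat u}$ by the universal property of $T((A))$ — the same idea at the same level of rigor.
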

\begin{proof}
Thanks to Proposition \ref{prop:dual} and the properties of the translation map, the map $\Psi^*_H T_{\hat{u}}\Phi^*_H$ is a $\otimes$-endomorphism satisfying
\[\Psi^*_H T_{\hat{u}}\Phi^*_H(\Psi_H^*(a))= \Psi^*_HT_{\hat{u}}(a)=\Psi^*_H(a+\Phi_H^*(u_a))=\Psi^*_H(a)+u_a\,.\]
Moreover, from  Corollary \ref{prop:hoffmann_iso_lie} we also deduce that $\Psi^*_H  T_{\hat{u}}\Phi^*_H$ is a Lie endomorphism of $\hat{\cL}((A))$. Therefore we conclude from the freeness of $\hat{\cL}((A))$ that the definition \eqref{eq:transl_quasi} is a bona fide extension of $\hat{T}_u$. Concerning the uniqueness of $\hat{T}_u$, let $\Gamma\colon T_{\omega}((A))\to T_{\omega}((A))$ be an $\otimes$-endomorphism extending $\hat{T}_u$. Considering the map $\Phi^*_H \Gamma\Psi^*_H$ we can easily check on any $a\in A$ the property
\[ \Phi^*_H \Gamma \Psi^*_H(a)= a+ \Phi^*_H (u_a)\,.\]
Using the fundamental property of $T((A))$, $\Phi^*_H \Gamma \Psi^*_H $ must coincide with $T_{\hat{u}}$ and we obtain the uniqueness. The last properties of $\hat{T}_u$ follows for the fact that $\Phi^*_H$ and $\Psi^*_H$ are graded maps.
\end{proof}
\begin{remark}\label{rem:open_prob_quasi_translation}
This procedure still leaves open the  question as how to define   an endomorphism  $ T_u: T_{\omega}((A))\to T_{\omega}((A))$   starting from a mere family $u=(u_1,\dots,u_d)$ such that $T_u i=i+u_i$ for $i=1,\dots,d$.  This further question is   motivated by the fact that such translations coming from  (only) $d$ choices of Lie elements exist in a canonical way in the geometric and branched case, see also \cite{Bruned2019}. It would therefore be desirable to have such a translation map which is consistent with both the geometric and the branched $d$-dimensional setting.
\end{remark}

The quasi-translation maps now at hand can be applied to sqgrps/sqgrms and their truncated versions as in Theorem \ref{thm:DynRen}. Indeed for any $u = (u_a \colon a\in A )\subset \hat{\cL} ((A))$ and sqgrp/sqgrm $\X$ we can actually define the compositions $\hat{T}_u (\X_t)$ and  $\hat{T}_u (\X_{s,t})$. In addition, for any $u = (u_a \colon a\in A )\subset \hat{\cL} (A)$ and $N$-sqgrm $\Y$ we define
\begin{equation}\label{defn_non_local_transl}
\hat{\mathcal{T}}_u[\Y]_{s,t}
:=  \hat{T}_u^{M} (q\mathrm{MinExt}^{M}(\Y))
=\proj_{M, \omega}\hat{T}_u(q\mathrm{MinExt}(\Y)_{s,t})\,,
\end{equation}
where $M=N\cdot N'$ with $N'$ the smallest  integer such that $u_a\in\hat{\mathcal{L}}^{N'}_{\omega}(A)$ for all $a\in A$ and $\hat{T}_v^M :=\proj_{M,\omega} \hat{T}_u\mathfrak{i}^M$ with the embedding $\mathfrak{i}^M:\,T^M_{\omega}(A)\to T_{\omega}((A))$. These two operations have equivalently a dynamical reinterpretation like Theorem \ref{thm:DynRen}.
\begin{proposition}\label{prop_dyn_ren}
Given a sqgrp (sqgrm) $\X$ over $A$ with weight $\omega$ and $u = (u_a \colon a\in A )\subset \hat{\cL} ((A))$, the composition $\hat{T}_u (\X_t)$ $(\hat{T}_u (\X_{s,t}))$ is again a sqgrp (sqgrm) which coincides (up to increments)  with the solution of
\begin{equation}\label{eq:transl_qODE1}
\dot \Z_t = \Z_t \otimes  (\hat{T}_{u} \dot{\X}_{t,t} )\, , \qquad \Z_0 = \hat{T}_u \X_0\,.
\end{equation}
The same result applies to good sqgrms  when $u\subset \hat{\cL} (A)$. In addition, for any  $u = (u_a \colon a\in A )\subset \hat{\cL} (A)$ and $N$-sqgrm $\Y$  the path $t\to \hat{\mathcal{T}}_u[\Y]_{0,t}$ is a sqgrp coinciding  with the solution of
\begin{equation}\label{eq:transl_qODE2}
\dot \W_t = \W_t\otimes_{M, \omega} (\hat{T}_{u} \dot{\Y}_{t,t} ) \,, \qquad \W_0 = \1, 
\end{equation}
where $M\in \mathbb{N}$ is contained in the definition of $ \hat{\mathcal{T}}_u[\Y]$ in \eqref{defn_non_local_transl}.
\end{proposition}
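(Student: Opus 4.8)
The plan is to transfer everything to the (weighted) geometric setting through the Hoffman isomorphism and then invoke Theorem~\ref{thm:DynRen}, whose proof, like that of Theorem~\ref{fundthmSGRP}, carries over verbatim to an arbitrary alphabet $A$ with a compatible weight $\omega$. Set $\hat{\X}:=\Phi^*_H\X$, a weighted sgrp resp.\ sgrm by Theorem~\ref{thm:fund_quasi_geo}, and $\hat{u}:=(\Phi^*_H(u_a):a\in A)$. Since $\Phi^*_H$ is a graded $\otimes$-isomorphism (Proposition~\ref{prop:dual}) carrying $\hat{\cL}((A))$ onto $\cL((A))$, and, when $u_a\in\hat{\cL}(A)$, carrying $\hat{\cL}^{N'}_\omega(A)$ onto $\cL^{N'}_\omega(A)$ with the same minimal $N'$, the translation data pass correctly to the geometric side; and by \eqref{eq:transl_quasi} in Theorem~\ref{thm:transl_quasi} we have the factorisation $\hat{T}_u=\Psi^*_H\,T_{\hat{u}}\,\Phi^*_H$, so that $\hat{T}_u(\X_t)=\Psi^*_H(T_{\hat{u}}(\hat{\X}_t))$ and likewise for increments.

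\emph{First assertion.} By Theorem~\ref{thm:DynRen}(i), $T_{\hat{u}}(\hat{\X}_t)$ is a weighted sgrp (resp.\ sgrm), so applying the graded $\otimes$-isomorphism $\Psi^*_H$ and Theorem~\ref{thm:fund_quasi_geo} once more shows $\hat{T}_u(\X_t)$ is a sqgrp (resp.\ sqgrm). For the dynamical description, note that $\hat{T}_u$ is a fixed $\otimes$-endomorphism of $T_\omega((A))$ commuting with $d/dt$ and restricting to a Lie endomorphism of $\hat{\cL}((A))$; applying it to $\dot{\X}_t=\X_t\otimes\dot{\X}_{t,t}$ (with $\X_t:=\X_{0,t}$ in the sqgrm case) gives
\[
\tfrac{d}{dt}\,\hat{T}_u(\X_t)=\hat{T}_u(\X_t)\otimes\hat{T}_u(\dot{\X}_{t,t}),
\]
with initial value $\hat{T}_u\X_0$. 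Since $\dot{\X}_{t,t}\in\hat{\cL}((A))$ (the diagonal derivative of a sqgrm, the quasi-geometric analogue of Proposition~\ref{prop:lie1}) and $\hat{T}_u$ preserves $\hat{\cL}((A))$, the right-hand side is the Cartan development of $\hat{T}_u\dot{\X}_{t,t}$, so $\Z_t:=\hat{T}_u(\X_t)$ solves \eqref{eq:transl_qODE1}; uniqueness follows by projecting onto the finite-dimensional groups $\hat{G}^{N'}_\omega(A)$ and passing to the projective limit, exactly as in the proof of Theorem~\ref{fundthmSGRP}. When $\X$ is a good sqgrm and $u\subset\hat{\cL}(A)$, one has in addition $\hat{u}\subset\cL(A)$ and $\hat{T}_u(\hat{\cL}(A))\subset\hat{\cL}(A)$, hence $\dot{\Z}_{t,t}=\hat{T}_u\dot{\X}_{t,t}\in\hat{\cL}(A)$, so $\Z$ is again good (quasi-geometric analogue of Lemma~\ref{lem:good}).

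\emph{Second assertion.} Let $\Y$ be an $N$-sqgrm, $u\subset\hat{\cL}(A)$, $N'$ minimal with $u_a\in\hat{\cL}^{N'}_\omega(A)$, and $M=NN'$. Using $\mathrm{MinExt}(\hat{\Y})=\Phi^*_H\,q\mathrm{MinExt}(\Y)$ (from \eqref{eq:qsig_sig}), the definition \eqref{defn_non_local_transl}, the gradedness of $\Phi^*_H,\Psi^*_H$, and the fact that $\hat{T}_u$ does not lower the weighted degree, one identifies $\hat{\mathcal{T}}_u[\Y]_{s,t}=\Psi^*_H(\mathcal{T}_{\hat{u}}[\hat{\Y}]_{s,t})$, where $\hat{\Y}=\Phi^*_H\Y$. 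By Theorem~\ref{thm:DynRen}(ii), the path $t\mapsto\mathcal{T}_{\hat{u}}[\hat{\Y}]_{0,t}$ is the unique solution of $\dot{\hat{\W}}_t=\hat{\W}_t\otimes_{M,\omega}(T_{\hat{u}}\dot{\hat{\Y}}_{t,t})$, $\hat{\W}_0=\1$. Since $\Psi^*_H$ is graded and degree-preserving, the same computation as in the proof of Theorem~\ref{thm:DynRen}(ii) shows it restricts to a $\otimes_{M,\omega}$-algebra automorphism of $T^M_\omega(A)$; applying it (it commutes with $d/dt$ and fixes $\1$) and using $\Psi^*_H\,T_{\hat{u}}\,\Phi^*_H=\hat{T}_u$ turns this ODE into $\dot{\W}_t=\W_t\otimes_{M,\omega}(\hat{T}_u\dot{\Y}_{t,t})$, $\W_0=\1$, which is \eqref{eq:transl_qODE2}; that $t\mapsto\hat{\mathcal{T}}_u[\Y]_{0,t}$ is a sqgrp then follows from Theorem~\ref{thm:fund_quasi_geo} applied to the weighted sgrp $\mathcal{T}_{\hat{u}}[\hat{\Y}]$.

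I expect the only genuinely delicate point to be the graded bookkeeping behind $\hat{\mathcal{T}}_u[\Y]_{s,t}=\Psi^*_H(\mathcal{T}_{\hat{u}}[\hat{\Y}]_{s,t})$ and behind the claim that $\Psi^*_H$ restricts to a $\otimes_{M,\omega}$-automorphism: one must commute the truncation $\proj_{M,\omega}$ and the embedding $\mathfrak{i}^M$ past $\Phi^*_H,\Psi^*_H$ (legitimate because these maps are graded, Proposition~\ref{prop:dual}) and past $\hat{T}_u$ (legitimate because $\hat{T}_u$ does not lower the weighted degree). This is the precise analogue of the manipulation already performed for $T^M_v$ in the proof of Theorem~\ref{thm:DynRen}(ii) and requires no new idea beyond it.
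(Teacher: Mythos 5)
Your proof is correct and follows essentially the same route as the paper: transfer to the weighted geometric setting via $\Phi^*_H$, apply Theorem \ref{thm:DynRen} to $\hat{\X}=\Phi^*_H\X$ and $\hat{\Y}=\Phi^*_H\Y$ with directions $\hat{u}=(\Phi^*_H(u_a))$, and pull back with $\Psi^*_H$ using the factorisation $\hat{T}_u=\Psi^*_H\,T_{\hat{u}}\,\Phi^*_H$ from \eqref{eq:transl_quasi} together with the fact that $\Phi^*_H,\Psi^*_H$ preserve the weighted grading. The direct verification of \eqref{eq:transl_qODE1} and the truncation bookkeeping you spell out are just more detailed versions of steps the paper's proof leaves implicit.
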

 
\begin{proof}
Choosing the family $\hat{u}=(\Phi^*_H(u_a) \colon a\in A )$, we can apply Theorem \ref{thm:DynRen} to the weighted sgrp (sgrm) $\hat{\X}=\Phi^*_H \X $ and $\hat{\Y}=\Phi^*_H \Y $. Then the paths associated to ${T}_{\hat{u}}(\hat{\X})$ and $\mathcal{T}_{\hat{u}}[\hat{\Y}]$ solve respectively the equations
\[
\dot{\hat{\Z}}_t = \hat{\Z}_t \otimes  (T_{\hat{u}} \dot{\hat{\X}}_{t,t} )\, , \qquad \hat{\Z}_0 = T_{\hat{u}} \hat{\X}_0 \,;
\]
\[
\dot{\hat{\W}}_t = \hat{\W}_t\otimes_{M, \omega} (T_{\hat{u}} \dot{\hat{\Y}}_{t,t} ) \,, \qquad \hat{\W}_0 = \1\,.
\]
Using the identity \eqref{eq:transl_quasi} and the properties of $\Phi^*_H$, $\Psi^*_H$ of preserving the weighted length, we deduce that $\hat{T}_{u}(\X)=\Psi^*_H{T}_{\hat{u}}(\hat{\X})$ and $\hat{\mathcal{T}}_u[\Y]=\Psi^*_H\mathcal{T}_{\hat{u}}[\hat{\Y}]$. Looking at the equation of the paths $\Psi^*_H \hat{\Z}$ and $\Psi^*_H \hat{\W}$, we obtain the equations \eqref{eq:transl_qODE1} and \eqref{eq:transl_qODE2}.
\end{proof}
We conclude the section  with a summary of the properties of translation operators $\hat{T}_{u} $ together with the differential equation \eqref{def:RDE}. For any collection of Lie polynomials with values in $A$ given by $v  = (v_a \colon a\in A )\subset \cL (A)$ and any family of vector fields $\left( f_a\colon a\in A \right)$, we consider the collection of vector fields $\left( f^v_a\colon a\in A \right)$ defined on any  $a\in A$ as 
\[ f^v_a:=f_a+ f_{v_a}\,, \]
where $f\colon\cL(A)\to\Vect^\infty(\bR^e)$ is the  Lie algebra morphism defined  in \eqref{def_vector_field} on a generic alphabet $A$. Extending $\left( f^v_a\colon a\in A \right)$  to all $T_{\omega}(A)$, we obtain a map $f^v\colon T_{\omega}(A)\to\Vect^\infty(\bR^e)$ which satisfies the identity $f^v= f_{T_v} $ over $\cL(A)$, as it was already shown  in the proof of Theorem \ref{thm:renormalisation_RDE}, when $A=\{1,\cdots,  d\}$.
By replacing the directions of translations $v$ with $u= (u_a \colon a\in A )\subset \hat{\cL} (A)$ we can uniquely define a Lie algebra morphism $\hat{f}^u\colon\hat{\cL}(A)\to\Vect^\infty(\bR^e) $ such that on any   $a\in A$ one has as 
\[ \hat{f}^u_{\Psi^*_H(a)}:=\hat{f}_{\Psi^*_H(a)}+ \hat{f}_{u_a}\,. \]
Extending $\hat{f}^u$ to all $T_{\omega}(A)$ and applying the definition of $\hat{f}$ with formula \eqref{eq:transl_quasi}, we also obtain a map  $\hat{f}^u\colon T_{\omega}(A)\to\Vect^\infty(\bR^e)$. Both maps $\hat{f}^u$ and $f^v$ allow to write the effect of translation at the level of differential equations.
\begin{theorem}\label{thm:final}
Let  $\X$ be a good sqgrm and $\W$ a $N$-sqgrm. For any given  $u=\{u_a\colon a\in A\}\subset \hat{\cL}(A)$ a path $Y\colon[0,T]\to\mathbb{R}^e $ solves one of the equation
$$
dY = \hat{f} (Y) d (\hat{T}_u(\X))\,,  \quad dY = \hat{f} (Y) d (\hat{\mathcal{T}}_u[\W])\,;
$$
if and only if it solves respectively
\begin{equation}\label{equation_1}
dY = \hat{f}^{u} (Y) d \X\,,  \quad dY =\hat{f}^{u} (Y) d \W\,.
\end{equation}
Alternatively,  by setting  $\hat{u}=\{\Phi^*_H(u_a)\colon a\in A\}\subset \cL(A)$ and $\hat{\X}= \Phi^*_H\X$, $\hat{\W}= \Phi^*_H\W$ then $Y$ solves also equivalently
\begin{equation}\label{equation_2}
dY = f^{\hat{u}} (Y) d \hat{\X}\,,  \quad dY = f^{\hat{u}} (Y) d \hat{\W}\,.
\end{equation}
\end{theorem}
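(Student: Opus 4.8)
The plan is to deduce the statement from its already-proved geometric counterpart, Theorem \ref{thm:renormalisation_RDE}, by conjugating everything with the adjoint Hoffman isomorphism. First I would normalise the two drivers: exactly as in the proof of Theorem \ref{thm:renormalisation_RDE}, a good sqgrm $\X$ is the quasi-geometric minimal extension of some $M$-sqgrm, so it suffices to treat the $N$-sqgrm $\W$ together with $\hat{\mathcal{T}}_u[\W]$ (the good-sqgrm case being the parallel one with $\hat{T}_u\X$ in place of $\hat{\mathcal{T}}_u[\W]$). Set $\hat{\W}:=\Phi^*_H\W$, which is a weighted $N$-sgrm by Theorem \ref{thm:fund_quasi_geo}, and $\hat{u}:=(\Phi^*_H(u_a))_{a\in A}\subset\cL(A)$.

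Next I transport the two renormalised drivers to the geometric side. From $\Psi^*_H=(\Phi^*_H)^{-1}$ (Proposition \ref{prop:dual}) and formula \eqref{eq:transl_quasi} one gets $\Phi^*_H\hat{T}_u=T_{\hat{u}}\Phi^*_H$, hence $\Phi^*_H(\hat{T}_u\X)=T_{\hat{u}}\hat{\X}$ with $\hat{\X}=\Phi^*_H\X$; likewise, the identity $\hat{\mathcal{T}}_u[\W]=\Psi^*_H\mathcal{T}_{\hat{u}}[\hat{\W}]$ obtained in the proof of Proposition \ref{prop_dyn_ren} gives $\Phi^*_H(\hat{\mathcal{T}}_u[\W])=\mathcal{T}_{\hat{u}}[\hat{\W}]$. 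By the equivalence $i)\Leftrightarrow ii)$ of Proposition \ref{lem:quasi_ONB}, applied with the vector fields $(f_a)_{a\in A}$, a path $Y$ solves $dY=\hat{f}(Y)d(\hat{T}_u\X)$ iff it solves $dY=f(Y)d(T_{\hat{u}}\hat{\X})$, and $dY=\hat{f}(Y)d(\hat{\mathcal{T}}_u[\W])$ iff $dY=f(Y)d(\mathcal{T}_{\hat{u}}[\hat{\W}])$. Now Theorem \ref{thm:renormalisation_RDE}, applied to the good sgrm $\hat{\X}$, resp.\ the weighted $N$-sgrm $\hat{\W}$, with the Lie-polynomial directions $\hat{u}$, rewrites both right-hand sides as $dY=f^{\hat{u}}(Y)d\hat{\X}$, resp.\ $dY=f^{\hat{u}}(Y)d\hat{\W}$; this already proves the equivalences with \eqref{equation_2}.

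It remains to move $dY=f^{\hat{u}}(Y)d\hat{\X}$ (resp.\ $dY=f^{\hat{u}}(Y)d\hat{\W}$) back to the quasi-geometric side. Consider the family $g_a:=f_a^{\hat{u}}=f_a+f_{\hat{u}_a}$, $a\in A$, and let $\hat{g}=g_{\Phi^*_H}$ be the associated map from Proposition \ref{lem:quasi_ONB}. On generators one computes $\hat{g}_{\Psi^*_H(a)}=g_a=f_a+f_{\Phi^*_H(u_a)}$, while $\hat{f}^u_{\Psi^*_H(a)}=\hat{f}_{\Psi^*_H(a)}+\hat{f}_{u_a}=f_a+f_{\Phi^*_H(u_a)}$ because $\hat{f}=f_{\Phi^*_H}$. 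Both $\hat{g}$ and $\hat{f}^u$ restrict to Lie algebra morphisms on $\hat{\cL}(A)$ — here one uses that $\Phi^*_H$ is a Lie isomorphism $\hat{\cL}(A)\to\cL(A)$ (Corollary \ref{prop:hoffmann_iso_lie}) and that the pre-Lie construction of vector fields induces a Lie morphism on a free Lie algebra — so they agree on the free generators $\Psi^*_H(a)$ and hence, by freeness of $\hat{\cL}(A)$, on all of $\hat{\cL}(A)$, in particular on each $\hat{\cL}^N_\omega(A)$. Applying $i)\Leftrightarrow ii)$ of Proposition \ref{lem:quasi_ONB} once more, now with the vector fields $(g_a)_{a\in A}$ and noting via characterisation $iv)$ that the driven equation only sees $\hat{g}=\hat{f}^u$ on $\hat{\cL}^N_\omega(A)$, gives $dY=\hat{f}^u(Y)d\X$ iff $dY=f^{\hat{u}}(Y)d\hat{\X}$, and the same with $\W,\hat{\W}$. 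Chaining the three steps yields both equivalences in \eqref{equation_1}.

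The bookkeeping of Hoffman conjugations is routine; the one point requiring care is the identification $\hat{g}=\hat{f}^u$ of the last step, i.e.\ that translating the vector fields and then passing to the quasi-geometric side agrees with passing to the quasi side and translating there. This rests on the freeness of $\hat{\cL}(A)$ and the Lie-morphism property of the pre-Lie-induced maps, exactly the mechanism used in the ``second argument'' of the proof of Theorem \ref{thm:renormalisation_RDE}; I expect this to be the main (though not severe) obstacle.
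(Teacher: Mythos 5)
Your proposal is correct and follows essentially the paper's own route: both arguments rest on the Hoffman intertwining $\Phi^*_H\hat{T}_u=T_{\hat{u}}\Phi^*_H$ (resp.\ $\hat{\mathcal{T}}_u[\W]=\Psi^*_H\mathcal{T}_{\hat{u}}[\hat{\W}]$ from Proposition \ref{prop_dyn_ren}), on Proposition \ref{lem:quasi_ONB}, and on the identification, via freeness of $\hat{\cL}(A)$ and agreement on the generators $\Psi^*_H(a)$, of $\hat{f}^u$ with $f^{\hat{u}}_{\Phi^*_H}$ on $\hat{\cL}(A)$ --- which is exactly your identity $\hat{g}=\hat{f}^u$. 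The only difference is organisational: you prove \eqref{equation_2} first by conjugating the renormalised drivers into the weighted geometric setting and invoking Theorem \ref{thm:renormalisation_RDE}, then pull back to get \eqref{equation_1}, whereas the paper obtains \eqref{equation_1} directly by rerunning the ``first argument'' of Theorem \ref{thm:renormalisation_RDE} in the quasi-geometric setting and then deduces \eqref{equation_2}; this is a reordering of the same steps, not a different method.
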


\begin{proof}
Equivalences \eqref{equation_1} and \eqref{equation_2} follow by combining the proof of  Theorem \ref{thm:renormalisation_RDE} and Proposition \ref{lem:quasi_ONB}. The only thing to check is to describe the relations between the maps $\hat{f}^u$ and $f^{\hat{u}}$ and the translations $T_{\hat{u}}$ and $\hat{T}_{u}$. Indeed, it follows from the the definition of $\hat{T}_{u} $ and $\hat{f}^u$ that one has
\[\hat{f}^u= \hat{f}_{\hat{T}_{u}}\]
on any element $\Psi^*_H(a)$ and consequently over all $\hat{\cL}(A)$. Following the first argument in the  Theorem \ref{thm:renormalisation_RDE} we get the first equivalence \eqref{equation_1}. Using the explicit definition of $\hat{f}$ and identity \eqref{eq:transl_quasi}, we deduce also the following equality on $\hat{\cL}(A)$
\[ \hat{f}_{\hat{T}_{u}}= f_{\Phi^*_H\hat{T}_{u}}= f_{T_{\hat{u}}\Phi^*_H}= f^{\hat{u}}_{\Phi^*_H}\,.\]
Applying the same argument as in the proof of $(ii)$ in Proposition \ref{lem:quasi_ONB}, we conclude.
\end{proof}

We conclude the section by extending the  time translation of Corollary \ref{cor_time_transl} in the quasi-shuffle context. For these purposes, we suppose given a generic alphabet $A^1$ with commutative bracket $\{\,,\}_1$ and weight $\omega_1$. Then we simply add an extra time component as  new letter $\hat{0}$  and we define the extended alphabet $\bar{A}=\{\hat{0}\}\cup A^1$ together with weight $\omega\colon \bar{A}\to \mathbb{N}^*$ defined by extending $\omega_1$ and putting  $\omega(\hat{0})=1$ and  the extended commutative bracket $\{\,, \} $ given  for any $a,b\in A^1$
\[\{a,\hat{0}\}=\{\hat{0},a\}=\{\hat{0},\hat{0}\}= 0\,,\quad \{a,b\}=\{a,b\}\,.\]
Under these conditions the signature of the time component $\X^0\colon [0,T]^2\to G(\R)$
\[ \X^0_{s,t}:=\exp_{\otimes}((t-s)\hat{0})\] 
can be easily embedded in $\hat{G}(\bar{A})$ and for any good sqgrm   $\X^1:[0,T]^2 \to \hat{G} (A_1)$ we can introduce again
$$\bar{\X}:=(\X^0,\X^1)_{\text{min}}\,.$$
(see also Remark \ref{min_quasi}). Since $\Psi^*_H(\hat{0})= \hat{0}$, for any given a $u_0\in \hat{\mathcal{L}}(\bar{A})$, we choose the translation  $\hat{T}_{u_0}\colon T(A)\to T(A)$ such that $\hat{T}_{u_0}\hat{0}=\hat{0}+u_0$ and such that $\hat{T}_{u_0}(\Psi^*_H(a))=\Psi^*_H(a)$. Then one has trivially $\hat{T}_{u_0}= id$ over the subalgebra of $T_{\omega}(\bar{A}) $ isomorphic to $T_{\omega_1}(A^1)$ and we can describe the relations for equations driven by $\bar{X}$. The proof follows easily from  Corollary \ref{cor_time_transl}, Theorem \ref{thm:final} and Proposition \ref{lem:quasi_ONB}.

\begin{corollary}\label{cor:srplus_quasi_trans}
Let  $\X^1$ be a good sqgrm and $u_0\in\hat{\mathcal{L}}(A)$. Then the translation $t\mapsto \hat{T}_{u_0}\bar{\X}_{0,t}$ can be described as the sum $\mathbf{U}_0\srplus \bar{\X}$, where $\mathbf{U}_0$ is given by
\[(\mathbf{U}_0)_{s,t}=\exp_{\otimes}(u_0(t-s))\,.\]
Moreover, for any given family $\left( f_{\hat{0}}, f_a \colon a\in A^1 \right)$  we define $\bar{f}\colon T(A)\to \Vect^\infty(\bR^e)$ and $f\colon T(A_1)\to \Vect^\infty(\bR^e)$ like in \eqref{map_f} starting  from $\left( f_{\hat{0}}, f_a \colon a\in A^1 \right)$ and $\left( f_a \colon a\in A^1 \right)$ respectively and using the notiations $\hat{u_0}= \Phi^*_Hu_0$, $\hat{\X}^1= \Phi^*_H\X^1$ equation 
$$dY = \hat{\bar{f}} (Y) d (\hat{T}_{u_{0}}(\bar{\X}))$$
is equivalent to 
$$
dY = (f_{\hat{0}} + \hat{\bar{f}}_{u_0})(Y) dt+ \hat{f} (Y) d \X^1\,,\qquad  dY = (f_{\hat{0}} + \bar{f}_{\hat{u_0}})(Y) dt+ f (Y) d \hat{\X}^1\,.
$$
\end{corollary}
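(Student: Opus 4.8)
The plan is to establish the two assertions of the corollary separately, each by reduction to a result already available. For the first — that $t\mapsto\hat{T}_{u_0}\bar{\X}_{0,t}$ coincides with the canonical sum $\mathbf{U}_0\srplus\bar{\X}$ — I would use the dynamic description of quasi-translations (Proposition \ref{prop_dyn_ren}) together with uniqueness of Cartan developments, by checking that both sqgrms have the same diagonal derivative and the same value at time $0$. For the second — the equivalences of differential equations — I would invoke Theorem \ref{thm:final} applied to a translation collection supported only on the letter $\hat 0$, then peel off the time component using Definition \ref{def:QDE}, and finally pass between the drivers $\X^1$ and $\hat{\X}^1$ via Proposition \ref{lem:quasi_ONB}(ii) (equivalently, transport everything through $\Phi^*_H$ and quote Corollary \ref{cor_time_transl}).

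For the first assertion I would begin by computing $\dot{\bar{\X}}_{s,s}$. Since $\bar{\X}=(\X^0,\X^1)_{\text{min}}$ is by construction the canonical sum of the embeddings of $\X^0$ and $\X^1$, Definition \ref{def:sum_and_scalar_multiplication} gives directly $\dot{\bar{\X}}_{s,s}=\hat 0+\dot{\X}^1_{s,s}$, where $\hat 0=\dot{\X}^0_{s,s}$ comes from $\X^0_{s,t}=\exp_\otimes((t-s)\hat 0)$ and $\dot{\X}^1_{s,s}\in\hat{\cL}((A^1))$ because $\X^1$ is a good sqgrm. As $\hat{T}_{u_0}$ is the identity on the subalgebra generated by $\{\Psi^*_H(a):a\in A^1\}$, hence on all of $\hat{\cL}((A^1))$ by the freeness in Corollary \ref{prop:hoffmann_iso_lie}, and sends $\hat 0$ to $\hat 0+u_0$, I obtain $\hat{T}_{u_0}\dot{\bar{\X}}_{s,s}=u_0+\hat 0+\dot{\X}^1_{s,s}$. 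On the other hand $\mathbf{U}_0$ is log-linear with constant diagonal derivative $u_0$, so by Definition \ref{def:sum_and_scalar_multiplication} the canonical sum $\mathbf{U}_0\srplus\bar{\X}$ has diagonal derivative $u_0+\hat 0+\dot{\X}^1_{s,s}$ as well; since both sqgrms are, by Proposition \ref{prop_dyn_ren}, the Cartan development of this path started at $\1$, they coincide.

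For the differential equations I would apply Theorem \ref{thm:final} to the good sqgrm $\bar{\X}$ over $\bar A$ with the collection $u=(u_a:a\in\bar A)$ given by $u_{\hat 0}=u_0$ and $u_a=0$ for $a\in A^1$, so that $\hat{T}_u=\hat{T}_{u_0}$. The translated vector fields then satisfy $\hat{\bar f}^u_{\Psi^*_H(a)}=\hat{\bar f}_{\Psi^*_H(a)}$ for $a\in A^1$ and $\hat{\bar f}^u_{\hat 0}=\hat{\bar f}_{\hat 0}+\hat{\bar f}_{u_0}=f_{\hat 0}+\hat{\bar f}_{u_0}$, using $\{\hat 0,\cdot\}=0$ (so the only word over $\bar A$ bracketing to $\hat 0$ is $\hat 0$ itself, whence $\hat{\bar f}_{\hat 0}=f_{\hat 0}$, and $\hat{\bar f}$ agrees with $\hat f$ on $\hat{\cL}(A^1)$, both being Lie morphisms that coincide on the generators $\Psi^*_H(a)$). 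Theorem \ref{thm:final} then gives $dY=\hat{\bar f}(Y)\,d(\hat{T}_{u_0}\bar{\X})$ iff $dY=\hat{\bar f}^u(Y)\,d\bar{\X}$; evaluating the latter through Definition \ref{def:QDE} at $\dot{\bar{\X}}_{s,s}=\hat 0+\dot{\X}^1_{s,s}$, using an orthonormal basis of $\hat{\cL}^N_\omega(\bar A)$ adapted to the splitting $\R\hat 0\oplus\hat{\cL}^N_\omega(A^1)$, produces exactly $dY=(f_{\hat 0}+\hat{\bar f}_{u_0})(Y)\,dt+\hat f(Y)\,d\X^1$. The second displayed form follows since $\hat{\bar f}_{u_0}=\bar f_{\Phi^*_H u_0}=\bar f_{\hat{u_0}}$ and Proposition \ref{lem:quasi_ONB}(ii) identifies $\hat f(Y)\,d\X^1$ with $f(Y)\,d\hat{\X}^1$; alternatively one may transport the whole argument through $\Phi^*_H$, noting $\Phi^*_H\bar{\X}=(\X^0,\hat{\X}^1)_{\text{min}}$ by Corollary \ref{cor:srplus_quasi} and $\Phi^*_H(\hat{T}_{u_0}\bar{\X})=\hat{T}_{\hat{u_0}}\Phi^*_H\bar{\X}$ by \eqref{eq:transl_quasi}, so that Corollary \ref{cor_time_transl} applies verbatim to $\hat{\X}^1$ with $v_0=\hat{u_0}$.

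I expect the only real difficulty to be bookkeeping rather than substance: one must be sure that the minimal coupling contributes no mixed Lie-bracket term to $\dot{\bar{\X}}_{s,s}$ beyond the naive sum $\hat 0+\dot{\X}^1_{s,s}$ — which is automatic from Definition \ref{def:sum_and_scalar_multiplication} — that $\hat{T}_{u_0}$ acts trivially on $\dot{\X}^1_{s,s}$ — because that element lies in $\hat{\cL}((A^1))$ and $\hat{T}_{u_0}$ is a Lie endomorphism fixing the generators $\Psi^*_H(a)$, $a\in A^1$ — and that the several normalisations of the vector-field maps $\hat{\bar f},\hat f,\bar f,f$ match across the time/space splitting. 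Once these checks are in place, the statement is a direct combination of Proposition \ref{prop_dyn_ren}, Theorem \ref{thm:final}, Corollary \ref{cor_time_transl} and Proposition \ref{lem:quasi_ONB}.
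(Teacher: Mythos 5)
Your argument is correct and follows essentially the same route the paper intends: the paper's proof is simply the remark that the statement "follows from Corollary \ref{cor_time_transl}, Theorem \ref{thm:final} and Proposition \ref{lem:quasi_ONB}", and your write-up fills in exactly those steps — matching diagonal derivatives and uniqueness of the Cartan development (via Proposition \ref{prop_dyn_ren}, mirroring \eqref{eq:srplusv0_geom}) for the identity $\hat{T}_{u_0}\bar{\X}=\mathbf{U}_0\srplus\bar{\X}$, and Theorem \ref{thm:final} with the translation supported on $\hat{0}$, together with Proposition \ref{lem:quasi_ONB}(ii) (equivalently transport by $\Phi^*_H$ and Corollary \ref{cor_time_transl}), for the equivalence of the differential equations.
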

\begin{remark}
Note that we recover in this smooth quasi-geometric rough path setting a property like the final statement of \cite[Thm. 30 (ii)]{Bruned2019}:
The smooth rough path increment/model $T_{v_0}\Z_t$ does not depend on the precise choice of the Hopf algebra homomorphism $T_{u_0}$, as long as $T_{u_0} \hat{0}=\hat{0}+u_0$ and $T_{u_0}=\id$ on $T_{\omega_1}(A^1)$.
\end{remark}

\section{Recast in an abstract Hopf algebra framework} \label{Hopf_algebra_section}

Many of the above constructions can be formulated in the abstract framework of Hopf algebras following the approach adopted in \cite{Tapia2018} and \cite{Manchon2018}. In this section, we introduce the notion smooth rough paths over a Hopf algebra and we will discuss their renormalisation. An interesting operation arises at this level: the canonical sum of rough paths, which is applied to present an alternative construction to renormalisation of branched rough paths,  along the lines of \cite{Bruned2019}.

\subsection{Smooth rough paths on a Hopf algebra}
In what follows we fix $(\mathcal {H}, \cdot , \Delta)$  a connected, $\mathbb{N}$-graded and locally finite  commutative Hopf algebra over $\mathbb{R}$. That is to say there is   a sequence of finite-dimensional vector spaces $\{\mathcal{H}_{n}\}_{n\geq 0}$ such that
$$
\mathcal{H}=\bigoplus_{n=0}^\infty {\mathcal H}_{n}\,, \quad \mathcal{H}_{0}\approx \mathbb{R}= \langle \mathbf{1}\rangle\,,
$$
where  $\mathbf{1}$ is the unit of $ \mathcal{H}$. For any given Hopf algebra in this class, we introduce two different notions of dual space: \textbf{full dual} ${\mathcal H}^\prime$ and the \textbf{the graded dual} ${\mathcal H}^*$ respectively defined by
\[{\mathcal H}^\prime:={\rm Hom}\left({\mathcal H}, \mathbb R\right)={\rm Hom}\left(\bigoplus_{n=0}^\infty  {\mathcal H}_n, \mathbb R\right)\,,\quad\mathcal{H}^*:=\bigoplus_{n=0}^\infty {\mathcal H}_{n}^{\prime}= \bigoplus_{n=0}^\infty {\rm Hom}({\mathcal H}_{n}, \mathbb R) \,,\]
where $V^{\prime}={\rm Hom}(V, \mathbb R)$ stands for the space of continuous $\mathbb R$-valued linear  forms on a topological vector space $V$. There is   a canonical pairing  $\langle\cdot, \cdot\rangle$  between ${\mathcal H}^\prime$ and ${\mathcal H}$ and by equipping ${\mathcal H}^\prime$ is with the weak topology i.e., the weakest topology  which  the evaluation maps $v^*\mapsto \langle v^*, u\rangle,\, u\in {\mathcal H}$ the space $\mathcal{H}^*$ lies dense in ${\mathcal H}^\prime $, see \cite[Lem.~1.7]{Bogfjellmo2016},\cite{Bogfjellmo2018}.      
 
 Using the  grading of $\mathcal{H}$, we can also define for any $N\in \mathbb{N}$ the truncated spaces
 \[\left(\mathcal{H}^{*}\right)^N:=\bigoplus_{n=0}^N \mathcal{H}_{n}^*\,,\quad \mathcal{H}^N:=\bigoplus_{n=0}^N {\mathcal H}_{n}\,,\]
 which yield two natural filtrations  and $\{(\mathcal{H}^{*})^N\colon N\in \mathbb{N}\}$,  $\{\mathcal{H}^N\colon N\in \mathbb{N}\}$ for the vector spaces  $\mathcal{H}^*$ and $\mathcal{H}$. We note that $\left(\mathcal{H}^{*}\right)^N=\left(\mathcal{H}^{N}\right)^*$ and will henceforth write $\mathcal{H}^{*N}$.

 The coproduct  $\Delta$ on ${\mathcal H}$ induces   by duality   a product $\star$ in ${\mathcal H}^\prime$ and a fortiori on $\mathcal{H}^*$ defined on any  $\alpha\,, \beta\in \mathcal{H}^\prime$ and $h\in \mathcal{H}$ via the identity
 \[\langle \alpha\star \beta, h\rangle := \langle (\alpha \otimes \beta),\Delta h\rangle_2\,,\]
where $\langle \cdot, \cdot\rangle_2$ is the canonical pairing between ${\mathcal H}^\prime\otimes {\mathcal H}^\prime $ and $ \mathcal{H}\otimes {\mathcal H}$ induced by the canonical dual pairing $\langle\cdot, \cdot \rangle$.  Moreover it is also possible to define a coproduct $\Delta^*$ only on $\mathcal{H}^*$ from the identity
\[
\langle \Delta^* w,u\otimes v \rangle_{2}:= \langle w,u  v \rangle\,,
\]
obtaining the dual Hopf algebra $(\mathcal{H}^*, \star, \Delta^*)$. The coproduct $\Delta$ is compatible with the filtration and hence so is the product $\star$, which maps    $\mathcal{H}^{*M}\times \mathcal{H}^{*N}$  to $ \mathcal{H}^{*M+N}$. The projections $\pi_N: {\mathcal H}\longrightarrow {\mathcal H}/\oplus_{n=N+1}^\infty {\mathcal H}_n$ onto the quotient by the ideal $\oplus_{n=N+1}^\infty {\mathcal H}_n$ is an algebra morphism so that  $\star_N:=\pi_N \star $ restricted to ${\mathcal H}_N$ defines a  truncated product  $ \star_N: \mathcal{H}^{*N}\times \mathcal{H}^{*N}\to \mathcal{H}^{*N}$. Much in the same way as  rough paths over a Hopf algebra $\mathcal{H}$ were defined, see  \cite{Tapia2018,Manchon2018}, we now define smooth rough paths and models.

 \begin{definition}
 We call a level-$N$ {\bf  smooth $\mathcal{H}$ rough path over}  (in short: $N$-\textbf{s$\mathcal{H}$rp}) any non zero path $\X: [0,T] \to {\mathcal H}^{*N}$  satisfying the following properties:
 \begin{itemize}
\item[(a'' i)] for all times $t \in [0,T]$ and for all  $ h\in {\mathcal H}^K$,   $k\in  { \mathcal H}^L$,  with $K+L= N$ one has
\begin{equation}\label{eq:shuffle_hopf}
\langle\X_{t},v \cdot w\rangle = \langle \X_{t},v\rangle\,\langle\X_{t},w\rangle\,.
\end{equation}
\item[(a'' ii)] For every word $ h\in {\mathcal H}^N$, the map $t \mapsto \langle\X_{t}, h\rangle$ is smooth.
\end{itemize}
 We call level-$N$ {\bf smooth rough model over $\mathcal{H}$}  (in short: $N$-\textbf{s$\mathcal{H}$rm}) any map $\X: [0,T]^2 \to \mathcal{H}^{*N}$  which satisfies property \eqref{eq:shuffle_hopf}   for all $\X_{s,t}$ as well as the following  properties:
\begin{itemize}
\item[(b' i)] The following abstract Chen relation holds:
\begin{equation}\label{eq:Chensrel_H}
\X_{su} \star_N \X_{ut}=\X_{s,t}\,
\end{equation} 
for any $s,u,t\in[0,T]$. 
\item[(b' ii)] For every  $h \in \mathcal{H}^N$, the map $t \mapsto \langle\X_{s,t},w\rangle$
is smooth, for one (equivalently: all) base point(s) $s \in [0,T]$.
\end{itemize}
By {\bf smooth rough path (model)} (in short: \textbf{s$\mathcal{H}$rp} and \textbf{s$\mathcal{H}$rm}), we mean a path (map) with values in  $\mathcal{H}^\prime$  for which  \eqref{eq:shuffle_hopf} holds  for any $w,v\in \mathcal{H}$ and relation \eqref{eq:Chensrel_H} holds with $\star$. 
\end{definition}
Algebraic properties of smooth rough paths on $\mathcal{H}$ are encoded in the specific Lie group (Lie algebra) structures of $\mathcal{H}'$. Following  \cite[Defn.~2.6]{Tapia2018},   property  \eqref{eq:shuffle_hopf} (in the case when $v,w\in \mathcal{H}$), amounts to   $\X_t$ belonging to the \textbf{group of  $N$-truncated characters} $G^N(\mathcal H)$ or the \textbf{group of characters} $G(\mathcal H)$,  for any $t$. The $\left(G({\mathcal H}), \star\right) $ is a topological Lie group when equipped with the topology of pointwise convergence, see \cite{Bogfjellmo2016} and by quotienting $(G^N(\mathcal H), \star_{N})$ is  a finite-dimensional Lie group. Their Lie algebras can be explicitly described by  the set ${\mathfrak g}^N({\mathcal H})$ of  \textbf{level-$N$ truncated  infinitesimal characters}, resp. the set ${\mathfrak g}({\mathcal H})$ of \textbf{infinitesimal characters} i.e,  of elements $\alpha\in {\mathcal H}^{\star N} $, resp. $\alpha\in {\mathcal H}^{\star} $  such that  for all  $ (h, k)\in \mathcal{H}^K\times  \mathcal{H}^L$ with $K+L= N$, resp.   $ (h, k)\in \mathcal{H}^2$,  the following property holds:
  \begin{equation}\label{eq:infshuffleH}
 \langle \alpha, h \cdot k\rangle=\langle \alpha, h \rangle\, \langle \mathbf{1}^*, k   \rangle+  \langle \mathbf{1}^*, h \rangle\,\langle \alpha,k\rangle\,,
 \end{equation} 
 where $\mathbf{1}^* $ is the counit of $\mathcal{H}$. The Lie brackets $[\cdot, \cdot]_{\star_N}$ on  ${\mathfrak g}^N({\mathcal H})$ and  $[\cdot, \cdot]_\star$  on ${\mathfrak g}({\mathcal H})$  are given by the commutators  of  $\star_N$  and $\star$. The pair $\left({\mathfrak g}({\mathcal H}),[\cdot, \cdot]_\star\right)$  defines a topological Lie algebra, see \cite{Bogfjellmo2018}. A special role is played by the set of \textbf{primitive elements} $P(\mathcal{H}^*)=  {\mathfrak g}({\mathcal H})\cap  \mathcal{H}^*$ which is a Lie algebra with the induced Lie brackets.
 To relate these Lie algebras with the the character groups we introduce the exponential and truncated exponential map $\exp_\star:\mathcal{H}^{\prime}\to \mathcal{H}^{\prime}$, $\exp_{\star_N}:\mathcal{H}^{*N}\to \mathcal{H}^{*N}$ defined by
 \begin{equation}\label{eq_exp_Hopf}
 \exp_\star {\bf x}:=\sum_{n=0}^\infty\frac{ {\bf x}^{\star n}}{n!}\,, \quad \exp_{\star_N}{\bf x}= \sum_{n\geq 0}^{N}\frac{\mathbf{x}^{\star n}}{n!}|_{\mathcal{H}^N}\,.
 \end{equation}
When restricted to the Lie algebras,  they induce bijections $\exp_{\star_N}: {\mathfrak g}^N({\mathcal H})\to G^N({\mathcal H})$ and the  $\exp_\star: {\mathfrak g}({\mathcal H})\to G({\mathcal H})$, turning $G({\mathcal H})$ into an {\bf analytic Lie group}, see  \cite[Thm.~3.7 and App.~B]{Bogfjellmo2016} and \cite[Thm.~3.9]{Bogfjellmo2018} for the properties of $\exp_\star$ and $G({\mathcal H})$. 

In practice, we can identify $N$-s$\mathcal{H}$rp $\X_t$ and $N$-s$\mathcal{H}$rm $\X_{s,t}$ have the same equivalence properties of their quasi geometric equivalent  by considering the increments of with respect to $\star$, see equation \eqref{eq:map_to_path}. Similarly to Definition \ref{def_extension}, we define the \textbf{extension  of a $N$-s$\mathcal{H}$rp}. The \textbf{diagonal derivative} 
\[
\dot{\X}_{s,s}:= \partial_t |_{t=s} \X_{s,t}= \X_s^{-1} \star \dot{\X}_s
\]
of  any given $\X$ in s$\mathcal{H}$rp lies in the Lie algebra $g(\mathcal{H})$. These properties allow to extend Theorem \ref{fundthmSGRP} on any smooth rough path over $\mathcal{H}$.
\begin{theorem}[Fundamental Theorem of s$\mathcal{H}$rp]\label{fundthmSGRPH}
Let $N \in \bN$. Any $\Y$ in $N$-s$\mathcal{H}$rp uniquely extends to some $\X$ in s$\mathcal{H}$rp which is minimal in the sense that 
$$
\X_s^{-1} \star \dot{\X}_s \in \mathfrak{g}^N (\mathcal{H})
$$
for all $s\in[0,T]$. We call $\mathcal{H}\mathrm{MinExt(\Y)} := \X$ the $\mathcal{H}$-{\bf minimal extension} of $\Y$ and also $\mathcal{H}\mathrm{MinExt^{N'}(Y)} := \pi_{N'} \X$, for $N' > N$, the $N'$-minimal extension of $\Y$. Moreover, for any $[s,t] \subset [0,T]$ the associated sgrm of $\X_{s,t}$  only depends on $\Y|_{[s,t]}$. We call this object the {\bf $\mathcal{H}$ signature} of $\Y$ on $[s,t]$, in symbols $\mathrm{Sig}_{\mathcal{H}}(\Y|_{[s,t]})$. A s$\mathcal{H}$rm $\X$ is called a {\bf good s$\mathcal{H}$rm} if it satisfies $\X= \mathcal{H}\mathrm{MinExt(\Y)}$ for some $N$-s$\mathcal{H}$rm $\Y$.
\end{theorem}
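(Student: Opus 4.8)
The plan is to mirror the proof of Theorem \ref{fundthmSGRP} verbatim, with the only substantive change being the replacement of the finite-dimensionality argument in $G^{N'}(\bR^d)$ by the analytic Lie group structure of $G(\mathcal H)$ recorded just before the statement. For existence, I would start from an $N$-s$\mathcal H$rm $\Y$, associate the path $\Y_t := \Y_{0,t}$, and set $\mathfrak y(t) := \dot\Y_{t,t}$, which by the discussion preceding the theorem lies in $\mathfrak g^N(\mathcal H)$ and is smooth in $t$ by (b' ii). The candidate extension is the Cartan development of $\mathfrak y$ into $G(\mathcal H)$, i.e. the solution of $\dot\X_t = \X_t \star \mathfrak y(t)$, $\X_0 = \1$. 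I would solve this first in each finite-dimensional quotient $G^{N'}(\mathcal H)$ for $N' \ge N$, where linearity of the equation gives a unique global solution (local existence from ODE theory, non-explosion from linearity), and then pass to the projective limit using that the projections $G^{N'+1}(\mathcal H) \to G^{N'}(\mathcal H)$ are Lie group morphisms, obtaining $\X \in G(\mathcal H)$. Setting $\X_{s,t} := \X_s^{-1} \star \X_t$ yields an s$\mathcal H$rm extending $\Y$ (agreement on $\mathcal H^N$ because $\X$ restricted to the level-$N$ quotient solves the same equation as $\Y$'s associated path, by uniqueness in $G^N(\mathcal H)$), and by construction $\dot\X_{t,t} = \X_t^{-1}\star\dot\X_t = \mathfrak y(t) \in \mathfrak g^N(\mathcal H)$, giving both minimality and the identity $\dot\X_{s,s} = \dot\Y_{s,s}$.

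For uniqueness, I would reduce to comparing two level-$(N{+}1)$ extensions, exactly as in the geometric case. If $\X, \bar\X$ are both minimal extensions, then on $\mathcal H^{N+1}$ they agree on $\mathcal H^N$ by the extension hypothesis; the difference of the level-$(N{+}1)$ components is controlled because $\mathcal H_{N+1}^*$ intersected with the relevant kernel sits in the center of $G^{N+1}(\mathcal H)$, so that $\Psi_{s,t} := \langle \X_{s,t} - \bar\X_{s,t}, h\rangle$ is additive in the concatenation sense for each $h \in \mathcal H_{N+1}$. One then shows $\dot\Psi_s = \langle \X_s^{-1}\star\dot\X_s - \bar\X_s^{-1}\star\dot{\bar\X}_s, h\rangle$, and since both diagonal derivatives lie in $\mathfrak g^N(\mathcal H)$ while $h$ has degree $N+1$, both pairings vanish; hence $\dot\Psi \equiv 0$ and $\Psi \equiv \Psi_0 = 0$. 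Induction on $N'$ upgrades this to all finite levels and hence to $\mathcal H'$.

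The remaining clauses are then bookkeeping: $\mathcal H\mathrm{MinExt}^{N'}(\Y) := \pi_{N'}\X$ is well-defined because the projective construction is consistent; locality of $\X_{s,t}$ on $\Y|_{[s,t]}$ follows because solving the linear equation with initial condition $\X_s = \1$ over $[s,t]$ depends only on $\mathfrak y|_{[s,t]} = \dot\Y_{\cdot,\cdot}|_{[s,t]}$, which is determined by $\Y|_{[s,t]}$; this gives the well-definedness of $\mathrm{Sig}_{\mathcal H}(\Y|_{[s,t]})$; and the notion of good s$\mathcal H$rm is just a definition requiring no proof. I expect the only genuine subtlety — and the main point where one must be more careful than in the geometric setting — to be the passage to the projective limit and the verification that the resulting element of $\mathcal H'$ is genuinely a character (not merely a formal series), for which I would invoke the cited results of Bogfjellmo--Schmeding on $G(\mathcal H)$ as an analytic Lie group with well-behaved $\exp_\star$, together with the fact that each finite-level solution is a character in $G^{N'}(\mathcal H)$ by the algebraic properties of $\star_{N'}$ and $\mathfrak g^{N'}(\mathcal H)$, so that the limit lies in $G(\mathcal H) = \varprojlim G^{N'}(\mathcal H)$.
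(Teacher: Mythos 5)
Your proposal is correct and takes essentially the same route as the paper: the published proof likewise just transports the argument of Theorem \ref{fundthmSGRP} with $\otimes$ replaced by $\star$, the only two new points being existence and uniqueness of the Cartan development in $G(\mathcal{H})$ (which the paper settles by citing Milnor-regularity of $G(\mathcal{H})$ following Bogfjellmo--Schmeding, whereas you redo the projective-limit construction over the finite-dimensional truncations $G^{N'}(\mathcal{H})$, legitimate since $\mathcal{H}$ is locally finite) and the fact that two characters of $\mathcal{H}^{N+1}$ agreeing on $\mathcal{H}^{N}$ differ by a central element (which the paper takes from Tapia--Zambotti, Prop.~2.10, and you justify by degree counting). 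Both substitutions are sound, so your argument matches the paper's in substance.
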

\begin{proof}
The proof of the result goes as that   of Theorem \ref{fundthmSGRP}, modulo the  replacement of  $\otimes$ by $\star$. The only properties to check in this generalised context is the existence and uniqueness of a smooth solution   for the initial value problem  \begin{equation}\label{eq:evolutioneq}
\dot{\gamma}(t)= \gamma(t)\star \eta(t) \,, \quad
\gamma(0)=\1^*
\end{equation}
for any given smooth curve $\eta:[0, T]\to {\mathfrak g}({\mathcal H})$. Moreover,  we need to prove that for any given couple $X, \bar{X}\in G^{N+1}(\mathcal{H})$ such that $\langle X,h\rangle = \langle X, h\rangle$ for any  $h\in \mathcal{H}_N$ then $X-\bar{X}$ belongs to the center of  $G^{N+1}(\mathcal{H})$. The first property  follows from the regularity in the sense of Milnor, see \cite{Milnor}, of  the Lie group $G({\mathcal H})$, see \cite[Thm.~B]{BogfjellmoSchmeding} and references therein. The second property follows from \cite[Prop.~2.10]{Tapia2018}. 
\end{proof}

\begin{example} Setting $\mathcal{H}:=\left(T(\mathbb{R}^d),\shuffle,\Delta\right) $ and $\mathcal{H}:=\left(T_{\omega}(A),\qshuffle,\Delta\right)$ we recover respectively Theorems \ref{fundthmSGRP} and \ref{thm:fund_quasi_geo}. In this identification, the operation $\star$, depending on $\Delta$ is identified with the concatenation product $\otimes$ see \cite{reutenauer1993free}. As recalled in Theorem \ref{thm:hoffmann_iso}, the  Hoffman's exponential and logarithm  \eqref{defn_exp} yield an isomorphism of graded Hopf algebras $\left(T(\mathbb{R}^d),\shuffle,\Delta\right) \simeq \left(T(\mathbb{R}^d),\qshuffle,\Delta\right)$. Hence the basic properties of these maps follow from the general fact that for any given Hopf algebra isomorphism $ \Gamma\colon \mathcal{H}\to \mathcal{\mathcal{K}}$ among two Hopf algebras $\mathcal{H}$ and $\mathcal{K}$ with the properties listed at the beginning, the adjoint map $\Gamma^*\colon \mathcal{H}^*\to \mathcal{K}^*$ is also an isomorphism. We observe also that the scalar product $\langle \cdot, \cdot\rangle$ defined on on $T(\R^d)$ and $T_{\omega}(A)$ can be used to identify the graded dual  $\mathcal{H}^*$ with $\mathcal{H}$ by means of the Riesz lemma, so that it is not necessary to introduce the notion of graded dual in that context.

\end{example}

\begin{example}
Another relevant example  in the context of renormalisation of rough paths arising in \cite{Bruned2019} is the Butcher-Connes–Kreimer Hopf algebra  $\mathcal {H}_{BCK}(\mathbb{R}^d)$ consists of polynomials of rooted forests $\tau$ with nodes decorated by the  finite set $\{1\,, \cdots \,, d\}$ together with the empty forest $\mathbf{1}$. A forest $f$ is graded accordingly to $|f|$, the number of its nodes and the coproduct $\Delta$ is defined on each tree $\tau$ as
 \begin{equation} \label{eq:CKHA}
 \Delta(\tau)=\sum_c P_c(\tau)\otimes R_c(\tau),
 \end{equation} 
 where the sum is taken over  a specific set of admissible cuts over the tree. The result of each cut produces a polynomial of trees  $P_c(t)$ is and $R_c(t)$ a tree corresponding to the root. We call the  $N$-s$\mathcal{H}$rp or s$\mathcal{H}$rp in this context \textbf{level-$N$ smooth branched rough paths}  and \textbf{smooth branched rough paths} (in shorts \textbf{$N$-sbrp} and \textbf{sbrp}), see \cite{gub10}. The operation $\star$ induced by the coproduct \eqref{eq:CKHA} coincide with the so called Grossman–Larson algebra forests \cite{grossman_larson89} and $P(\mathcal {H}^*_{BCK}(\mathbb{R}^d))$ coincides with  the free vector space $\langle\mathfrak{T}_d\rangle$ generated by $\mathfrak{T}_d$, the set of dual trees  decorated by the  finite set $\{1\,, \cdots \,, d\}$ and $\mathfrak{g}(\mathcal {H}_{BCK}(\mathbb{R}^d))$ is the vector space  $\langle\mathfrak{T}'_d\rangle$ of tree series.
 \end{example}
In adequacy with the previously results, level-$N$ smooth rough models over $\mathcal{H}$ are a special case of $\gamma$-H\"older rough paths over a Hopf algebra introduced in
 \cite{Manchon2018}.
\begin{proposition}
 Every $N$-s$\mathcal{H}$rm $\X$ is a $1/N$-regular $N$-truncated
$\mathcal{H}$ rough path, see \cite[Defn.~4.3]{Manchon2018}  and its minimal extension $\tilde{\X}$ coincides with the lift of $\X$, as constructed in \cite[Thm.~4.4]{Manchon2018}.
\end{proposition}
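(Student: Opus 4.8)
The plan is to mirror the argument of Proposition~\ref{prop:smooth_vs_normal} and of its quasi-geometric analogue, replacing the concatenation product $\otimes$ by $\star$ and the word length by the grading of $\mathcal{H}$. There are two things to establish: the analytic regularity of an $N$-s$\mathcal{H}$rm, and the coincidence of its minimal extension with the lift of \cite[Thm.~4.4]{Manchon2018}.

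For the first part I would note that the algebraic conditions of \cite[Defn.~4.3]{Manchon2018} — that $\X_{s,t}$ be a character of $\mathcal{H}^N$ and satisfy Chen's relation \eqref{eq:Chensrel_H} — are already built into the definition of an $N$-s$\mathcal{H}$rm, so only the quantitative estimate needs attention. First I would observe that setting $u=s=t$ in \eqref{eq:Chensrel_H} makes $\X_{t,t}$ a $\star_N$-idempotent character, hence equal to the counit $\1^*$, so that $\langle\X_{t,t},h\rangle=0$ for every homogeneous $h$ of positive degree. For such $h$ with $\deg h=n\le N$, smoothness of $(s,t)\mapsto\langle\X_{s,t},h\rangle$ on the compact square $[0,T]^2$ together with its vanishing on the diagonal gives, by Taylor, $|\langle\X_{s,t},h\rangle|\le C_h|t-s|$ uniformly; subdividing $[0,T]$ so that $|t-s|\le1$, this is $\le|t-s|^{n/N}\le|t-s|^{n\gamma}$ for any $\gamma\le1/N$. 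Hence $\X$ is a $\gamma$-regular $N$-truncated $\mathcal{H}$ rough path for all such $\gamma$, in particular $1/N$-regular, and picking $\gamma$ with $1/\gamma\in(N,N+1)$ places us squarely in the setting of \cite[Thm.~4.4]{Manchon2018}.

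For the second part I would use Theorem~\ref{fundthmSGRPH}: $\tilde\X=\mathcal{H}\mathrm{MinExt}(\X)$ is by construction the Cartan development in $G(\mathcal{H})$ of the smooth $\mathfrak{g}^N(\mathcal{H})$-valued path $\mathfrak{y}(t):=\dot\X_{t,t}$, and $\pi_N\tilde\X_{s,t}=\X_{s,t}$, so $\tilde\X$ genuinely extends $\X$. The key point to check is that $\tilde\X$ is itself a $\gamma$-regular \emph{full} $\mathcal{H}$ rough path for the same $\gamma$. This I would get from the defining equation $\dot{\tilde\X}_t=\tilde\X_t\star\mathfrak{y}(t)$ by an induction on $\deg h$ along the dual coproduct $\Delta^*$, using $\langle\mathfrak{y}(t),\1\rangle=0$ and $\langle\mathfrak{y}(t),h\rangle=0$ for $\deg h>N$ (since $\mathfrak{g}^N(\mathcal{H})\subset\mathcal{H}^{*N}$): this yields $\langle\tilde\X_{s,t},h\rangle=O(|t-s|^{\lceil\deg h/N\rceil})$ uniformly, and $\lceil\deg h/N\rceil\ge\gamma\,\deg h$, which is exactly the regularity required for a $\gamma$-Hölder $\mathcal{H}$ rough path. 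Since \cite[Thm.~4.4]{Manchon2018} asserts uniqueness of the $\gamma$-Hölder extension, $\tilde\X$ must coincide with the lift constructed there. (An alternative, following Proposition~\ref{prop:smooth_vs_normal}(ii), is to identify both $\tilde\X$ and Manchon's lift level by level as the unique solution of $\dot\Z_t=\Z_t\star\mathfrak{y}(t)$, $\Z_0=\1^*$, respectively of its rough counterpart.)

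The main obstacle I anticipate is purely bookkeeping: making sure the smoothness estimates above are phrased exactly as the ``$\gamma$-regularity relative to a control'' hypothesis of \cite[Defn.~4.3, Thm.~4.4]{Manchon2018}, and that the uniqueness asserted there — the Hopf-algebraic Lyons extension theorem — applies to an extension of the regularity we have. The inductive estimate $\langle\tilde\X_{s,t},h\rangle=O(|t-s|^{\lceil\deg h/N\rceil})$ is the same kind of computation already used for $\deg h>N$ at the end of the proof of Theorem~\ref{thm:renormalisation_RDE}; the rest is a routine transcription of Proposition~\ref{prop:smooth_vs_normal} and Theorem~\ref{fundthmSGRPH}.
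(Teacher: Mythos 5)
Your argument is correct. Note that the paper gives no written proof of this proposition at all: it is stated as the evident Hopf-algebraic analogue of Proposition~\ref{prop:smooth_vs_normal} (whose quasi-geometric counterpart is explicitly ``left to the reader''), so there is nothing to match line by line. Your first part is exactly the argument of Proposition~\ref{prop:smooth_vs_normal}(i) transcribed to $\mathcal{H}$: $\X_{t,t}=\1^*$, smoothness plus vanishing on the diagonal give $|\langle\X_{s,t},h\rangle|\lesssim|t-s|\lesssim|t-s|^{\deg h/N}$ for $\deg h\le N$. For the second part you take a slightly different, and in fact more complete, route than the one-line identification in Proposition~\ref{prop:smooth_vs_normal}(ii) (where the paper simply observes that the minimal extension and the Lyons lift solve the same differential equation, classically resp.\ in the rough sense): you show via Theorem~\ref{fundthmSGRPH} and an induction along the coproduct that the minimal extension itself satisfies $\langle\tilde\X_{s,t},h\rangle=O(|t-s|^{\lceil\deg h/N\rceil})$, hence is a $\gamma$-regular full $\mathcal{H}$ rough path, and then invoke the uniqueness in the Hopf-algebraic Lyons extension theorem of Manchon et al.\ to conclude coincidence. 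The induction is sound: in $\partial_t\langle\tilde\X_{s,t},h\rangle=\langle\tilde\X_{s,t}\star\dot\X_{t,t},h\rangle$ only terms $h'\otimes h''$ of the coproduct with $1\le\deg h''\le N$ contribute (infinitesimal characters kill $\1$ and $\dot\X_{t,t}\in\mathfrak g^N(\mathcal H)$ kills degrees above $N$), so $\deg h'\ge\deg h-N$ and integration from the diagonal raises the order by one, exactly as in the remainder estimate at the end of the proof of Theorem~\ref{thm:renormalisation_RDE}. What your route buys is a self-contained verification that the hypotheses (and the uniqueness clause) of the cited extension theorem really apply; what the paper's sketched route buys is brevity, identifying both objects as solutions of the same (rough) linear equation. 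Either is acceptable; only the phrasing of the regularity condition relative to a control, which you already flag, needs to be aligned with the cited definition.
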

\begin{remark}
The same considerations of  Remark \ref{rk-weakly} apply also to the class of $\gamma$-regular $N$-truncated
$\mathcal{H}$ rough paths. 
\end{remark}

 \subsection{Translation of smooth rough paths on a Hopf algebra}

We now extend to the framework of smooth $\mathcal{H}$-rough paths the notion of translation discussed in Theorems \ref{thm:DynRen} and \ref{thm:transl_quasi}. To specify in which  direction  we can perform a translation,  we choose a generic  finite-dimensional subspace $ \mathcal{D}\subset P(\mathcal{H}^*)$. Fixing  a basis $\{h_1,\cdots, h_e\}$ of  $\mathcal{D}$, we obtain a fixed set of directions to apply a translation. By assigning  an element of $P(\mathcal{H}^*)$ to   each direction,  we introduce an abstract notion of translation.

\begin{definition}\label{def:Mv} 
Given a family of primitive elements $ v=\{v_i\colon i=1,\cdots,e \}\subset \mathfrak{g}(\mathcal{H})$ we call a \textbf{translation map over $\mathcal{D}$} any continuous  Lie algebra homomorphism $M_v\colon \mathfrak{g}(\mathcal{H})\to \mathfrak{g}(\mathcal{H})$ such that $M_{v}h_i= h_i+ v_i$ for any $i=1\,, \cdots \,, e$.
\end{definition}
\begin{remark}
Using the topological properties of $\mathfrak{g}(\mathcal{H})$, if $ v=\{v_i\colon i=1,\cdots,e \}\subset P(\mathcal{H}^*)$  to define a continuous Lie algebra homomorphism $M_v$  it is sufficient to have a Lie homomorphism $M_v\colon P(\mathcal{H}^*)\to \mathfrak{g}(\mathcal{H})$ since  the Lie algebra $P({\mathcal H}^\star)$ is dense in ${\mathfrak g}({\mathcal H})$, see \cite[Rrk.~3.11]{Bogfjellmo2018}. Note that Definition \ref{def:Mv}  only gives a pointwise definition for a fixed $v$.  As in the cases studied in this paper, in practice, we need a map $v\to M_v$ with certain consistency properties like $M_v M_u=M_{v+M_v u}$, a condition we do not impose here.  More generally we hope to investigate in future work, what properties one should impose on $\mathcal{D}$ and $v$ in a general Hopf algebra framework. A first suggestion of such a set of axioms was very recently given in \cite[Defn.6, Defn.7]{Rahm21}.
\end{remark}

\begin{remark} The previous translation maps $T_v$ and $\hat{T}_{\hat{v}}$ in the geometric and quasi-geometric setting are specific examples of translation over two different subspaces of primitive elements, i.e. the  vector space $\mathbb{R}^d$ and the free vector space generated by $\mathfrak{A}= \{\Psi^*_H(a)\colon a\in A\}$. This is a very specific situation one uses  the specific structure of $P(\mathcal{H}^*)$ as a free Lie algebra, which ensures both the existence and the uniqueness of a translation map. However, it was    shown in \cite[Ex.9]{Bruned2019} that  one can construct two different translation maps over the the same vector space $\mathcal{D}$ when $\mathcal{H}=\mathcal {H}_{BCK}(\mathbb{R}^d)$. The Lie structure of $\mathfrak{g}(\mathcal{H})$  is therefore  not sufficient to determine a unique translation. Hence the idea of a definition which does not involve a uniqueness of the translation  in its formulation. 

\end{remark}
Once given $ v=\{v_i\colon i=1,\cdots,e \}\subset \mathfrak{g}(\mathcal{H})$ and a translation map $M_v$ over $\mathcal{D}$, we can actually uniquely extend $M_v$ to a continuous $\star$ morphism $M_v\colon \cH'\to \cH'$ \textbf{the full translation map} which we denote in the same way. The extension is purely algebraic and follows by standard Milnor-Moore theorem \cite{MilnorMoore}. Indeed any translation map defines uniquely a Lie algebra morphism $M_v\colon P(\mathcal{H}^*)\to \mathcal{H}'$ which is compatible with the product $\star$. Using the universal property of the  universal enveloping algebra $\mathcal{U}(P(\mathcal{H}^*))$ and the Milnor-Moore theorem, the map $M_v$  uniquely extends to a $\star$ morphism $M_v\colon \mathcal {H}^*\to  \mathcal{H}'$ and by density it can be defined over all $\mathcal{H}'$. The map $M_v$ to perform translation of s$\mathcal{H}$rms Theorem \ref{thm:DynRen} and Proposition \ref{prop_dyn_ren}.

\begin{theorem}\label{thm:transl_H}
Given a s$\mathcal{H}$rp (s$\mathcal{H}$rm) a family $ v=\{v_i\colon i=1,\cdots,e \}\subset \mathfrak{g}(\mathcal{H})$  and a translation map $M_v$ over $\mathcal{D}$ the composition $M_v (\X_t)$, $(M_v (\X_{s,t}))$ is again a s$\mathcal{H}$rp (s$\mathcal{H}$rm) which coincides (up to increments)  with the solution of
\begin{equation}\label{eq:translODE}
\dot \Z_t = \Z_t \star  (M_v \dot{\X}_{t,t} )\, , \qquad \Z_0 = M_v\X_0\,.
\end{equation}
The same result applies to good s$\mathcal{H}$rms when $ v\subset P(\mathcal{H}^*)$. Supposing also that for any $N\in \mathbb{N}$ there exists an integer $L\geq N$ depending on $v$ such that $M_v\colon \mathcal{H}^{*N}\to\mathcal{H}^{*L}$, then for any $N$-s$\mathcal{H}$rp $Y$ the unique solution to 
$$
\dot \W_t = \W_t \star_{L}  (M_{v} \dot{\Y}_{t,t} )\, , \qquad \W_0 = \1^*, 
$$
defines a $L$-s$\mathcal{H}$rm, given by $\W_{s,t} = \W_s^{-1}\otimes_M \W_t$, which we call $\mathcal{M}_v[\Y]$. Moreover, we have the explicit form
\begin{equation}\label{eq:not_local_transl_H}
\mathcal{M}_v[\Y]_{s,t}
=  {M}_v^{L} (\Y_{s,t}^{L})
=\pi_{L}{M}_v(\X_{s,t})
\end{equation}
with algebra endomorphism $M_v^L := \pi_{M} M_v\mathfrak{i}^M$ of $(\mathcal{H}^*M,\star_M)$, using the (linear) embedding $\mathfrak{i}^M:\,\mathcal{H}^*M\to \mathcal{H}'$, and $\Y^M=\mathcal{H}\mathrm{MinExt}^{M}(\Y)$.
\end{theorem}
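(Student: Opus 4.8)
The plan is to reduce Theorem~\ref{thm:transl_H} to the already-proven statements by following the exact same strategy used in the proofs of Theorem~\ref{thm:DynRen} and Proposition~\ref{prop_dyn_ren}, now with the concatenation product $\otimes$ replaced by the Hopf-dual product $\star$ and the translation maps $T_v$, $\hat T_u$ replaced by the abstract full translation map $M_v$ from Definition~\ref{def:Mv}. The key structural facts we have available are: (i) $M_v$ is a continuous $\star$-endomorphism of $\mathcal H'$; (ii) $M_v$ restricted to $\mathfrak g(\mathcal H)$ is a Lie algebra homomorphism, hence maps infinitesimal characters to infinitesimal characters; (iii) by the Fundamental Theorem of s$\mathcal H$rp (Theorem~\ref{fundthmSGRPH}), a s$\mathcal H$rm is uniquely determined by its diagonal derivative $\dot{\X}_{s,s}\in\mathfrak g(\mathcal H)$ via Cartan development, i.e.\ solving $\dot\Z_t=\Z_t\star\dot\Z_{t,t}$; and (iv) the Cartan development equation $\dot\gamma=\gamma\star\eta$ is uniquely solvable in $G(\mathcal H)$ (and in each $G^L(\mathcal H)$, globally, by linearity) as established inside the proof of Theorem~\ref{fundthmSGRPH} via regularity of $G(\mathcal H)$.

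First I would prove the s$\mathcal H$rp/s$\mathcal H$rm part. Fix a s$\mathcal H$rp $\X$ and write $\dot\X_t=\X_t\star(\X_t^{-1}\star\dot\X_t)=\X_t\star\dot\X_{t,t}$. Since $M_v$ commutes with differentiation (being linear and continuous) and is a $\star$-morphism, $\Z_t:=M_v(\X_t)$ satisfies \eqref{eq:translODE}. Because $M_v$ sends characters to characters (it is a $\star$-endomorphism, and one checks directly from \eqref{eq:shuffle_hopf} that $\langle M_v\X_t, h\cdot k\rangle = \langle M_v\X_t,h\rangle\langle M_v\X_t,k\rangle$ using that $M_v$ is multiplicative for $\star$ whose coproduct is dual to the product of $\mathcal H$), $\Z_t\in G(\mathcal H)$; and by (ii), $M_v(\dot\X_{t,t})\in\mathfrak g(\mathcal H)$. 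Hence $\Z$ is the Cartan development of $t\mapsto M_v(\dot\X_{t,t})$, which by uniqueness (Theorem~\ref{fundthmSGRPH}) shows that the solution of \eqref{eq:translODE} is exactly $M_v(\X_t)$ and is a s$\mathcal H$rp; the increments give $\Z_{s,t}=M_v(\X_{s,t})$, and starting from a s$\mathcal H$rm one applies this to $t\mapsto\X_{0,t}$. For the ``good'' case with $v\subset P(\mathcal H^*)$, one uses that $M_v$ then preserves $P(\mathcal H^*)$ (primitives map to primitives under a $\star$-morphism that is also a Lie morphism on $\mathfrak g$), so $M_v(\dot\X_{t,t})\in P(\mathcal H^*)\subset\mathfrak g^{N'}(\mathcal H)$ for suitable $N'$, and Lemma-type characterisation of good models (the analogue of Lemma~\ref{lem:good}) applies.

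Next I would treat the truncated renormalisation map $\mathcal M_v$. Assume the grading hypothesis: for each $N$ there is $L\ge N$ with $M_v(\mathcal H^{*N})\subseteq\mathcal H^{*L}$. Given an $N$-s$\mathcal H$rm $\Y$, set $\Y^L:=\mathcal H\mathrm{MinExt}^L(\Y)$ and $\Z_t:=\Y^L_{0,t}$, which satisfies $\dot\Z_t=\Z_t\star_L\dot\Z_{t,t}$ with $\dot\Z_{t,t}\in\mathfrak g^L(\mathcal H)$. Define $M_v^L:=\pi_L M_v\mathfrak i^L$ and verify it is an algebra endomorphism of $(\mathcal H^{*L},\star_L)$ exactly as in the computation displayed in the proof of Theorem~\ref{thm:DynRen}: for $x,y\in\mathcal H^{*L}$, $(M_v^Lx)\star_L(M_v^Ly)=\pi_L\big((M_vx)\star(M_vy)\big)=\pi_L M_v(x\star y)=\pi_L M_v\mathfrak i^L\pi_L(x\star y)=M_v^L(x\star_Ly)$, where the middle step uses that $M_v$ does not lower the grade so $M_v$ preserves the ideal $\bigoplus_{n>L}\mathcal H_n^*$. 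Applying $M_v^L$ to $\dot\Z_t=\Z_t\star_L\dot\Z_{t,t}$ shows $M_v^L(\Z_t)$ is the Cartan development in $G^L(\mathcal H)$ of $t\mapsto M_v^L(\dot\Z_{t,t})=\pi_L(M_v\dot\Y_{t,t})$; comparing with the defining ODE for $\W$ and using uniqueness of the development in the finite-dimensional group $G^L(\mathcal H)$ yields $\W_t=M_v^L(\Z_t)$, hence $\W_{s,t}=M_v^L(\Z_s)^{-1}\star_L M_v^L(\Z_t)=M_v^L(\Y^L_{s,t})$, which is \eqref{eq:not_local_transl_H}; the identity $M_v^L(\Y^L_{s,t})=\pi_L M_v(\X_{s,t})$ with $\X=\mathcal H\mathrm{MinExt}(\Y)$ is then immediate from $\Y^L=\pi_L\X$ and the grading property. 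Finally, that $\mathcal M_v[\Y]$ is a genuine $L$-s$\mathcal H$rm follows because $M_v^L$ sends $G^L(\mathcal H)$-valued paths with diagonal derivative in $\mathfrak g^L(\mathcal H)$ to the same, and that it is ``good'' in the $v\subset P(\mathcal H^*)$ case is inherited as above.

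The main obstacle I anticipate is not any single computation but making the extension and well-definedness of $M_v$ on all of $\mathcal H'$ rigorous enough to justify ``$M_v$ commutes with differentiation'' and ``$M_v$ is a $\star$-morphism of characters'' at the level of time-dependent paths: one must check continuity of $M_v$ in the weak topology so that $M_v(\dot\X_t)=\frac{d}{dt}M_v(\X_t)$ (this is where the Milnor--Moore-based construction and density of $P(\mathcal H^*)$ in $\mathfrak g(\mathcal H)$ are used), and one must confirm that the grading hypothesis in the statement is exactly what is needed to make $\pi_L M_v$ land in $\mathcal H^{*L}$ and be multiplicative for $\star_L$. Everything else is a faithful transcription of the geometric and quasi-geometric proofs with $\otimes\rightsquigarrow\star$.
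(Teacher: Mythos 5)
Your proposal is correct and follows exactly the route the paper takes: the published proof of Theorem~\ref{thm:transl_H} is literally ``check that $M_v\X$ and $\mathcal{M}_v[\Y]$ solve \eqref{eq:translODE} and \eqref{eq:not_local_transl_H}, following the proof of Theorem~\ref{thm:DynRen} with $\otimes$ replaced by $\star$,'' which is the reduction you carry out, with the same ingredients ($M_v$ a continuous $\star$-morphism commuting with $\partial_t$, $M_v\dot\X_{t,t}\in\mathfrak g(\mathcal H)$, uniqueness of Cartan development from Theorem~\ref{fundthmSGRPH}, and the truncated-endomorphism computation for $M_v^L$ under the grading hypothesis). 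In fact you supply more detail than the paper does, so there is nothing further to reconcile.
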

\begin{proof}
The theorem is then concluded by checking that  $M_{v} \X$ and $\mathcal{M}_v[\Y]$ solves the equation \eqref{eq:translODE}  and \eqref{eq:not_local_transl_H} for any  s$\mathcal{H}$rp $\X$ and $N$-s$\mathcal{H}$rp $Y$. This last check follows like in the proof of Theorem  \ref{thm:DynRen} by replacing $\otimes$ with $\star$. 
\end{proof}

As direct application of Theorem \ref{thm:transl_H}, we present a characterisation of the renormalisation of branched rough paths introduced in \cite{Bruned2019} when $\mathcal{H}=\mathcal {H}_{BCK}(\mathbb{R}^d)$. In that case,  the authors provided the existence a translation map $M_{v}\colon \mathfrak{T}_d\to \mathfrak{T}_d$ over the subspace $\mathcal{D}$ generated by the dual trees $\{\bullet_i^*\colon i=1\,,\cdots,d\}$ by using a specific property of the Lie algebra $(\langle\mathfrak{T}_d\rangle, [\,,]_{\star})$, which we briefly sketch.

Recall that a (left) pre-Lie algebra operation is a vector space $V$ equipped with a bilinear map $ \curvearrowright :V\otimes V\to V$ whose associator 
\[ (x,y,z)=(x\curvearrowright y)\curvearrowright z-x\curvearrowright (y\curvearrowright z)\]
is  invariant under the exchange of the two variables $y$ and $z$, see \cite{Manch}. Given a pre-Lie algebra, one can construct a Lie bracket via its   anti-symmetrisation
\[ [x\,,y]_{\curvearrowright}= (x\curvearrowright y) - (y\curvearrowright x)\,.\]
It turns out that $\langle\mathfrak{T}_d\rangle$ admits an explicit pre-Lie algebra structure $\curvearrowright$ which as well as satisfying  $[\,,]_{\curvearrowright}=[\,,]_{\star}$, also  has the property that $(\langle\mathfrak{T}_d\rangle, \curvearrowright)$ is isomorphic to the \textbf{free Pre-Lie algebra} over $d$ elements, see \cite{ChapLiv}.

Thanks to this property for any given family of tree series $v= (v_1\,, \cdots\,, v_d )\subset \mathfrak{T}'_d$, it is then possible to fix a unique translation map $M_v\colon \langle \mathfrak{T'}_d\rangle  \to \langle \mathfrak{T'}_d\rangle$  which satisfies 
\begin{equation}\label{unique_M}
M_v\bullet_i^*= \bullet_i^*+ v_i\,, \quad M_v(x\curvearrowright y)= (M_vx)\curvearrowright (M_vy)\,,
\end{equation}
for any $i =1, \cdots, d$. It follows from the standard properties of the operation $\curvearrowright $ and the grossmann-Larson product on the grading on trees $|\cdot|$ that for any given $v= (v_1\,, \cdots\,, v_d )\subset \mathfrak{T}_d$ $M_v$ maps $\mathcal {H}^{*N}_{BCK}(\mathbb{R}^d)$ to $\mathcal{H}^{*L}_{BCK}(\mathbb{R}^d)$ where $L=N\cdot N'$ with $N'$  the smallest integer such that $|v_i|\leq N'$ for any $i =1, \cdots, d$. From Theorem \ref{thm:transl_H} we deduce the following corollary.
\begin{corollary}\label{last_cor}
Given a sbrp (sbrm) $\X$  and $v = (v_1,\cdots v_d)\subset \mathfrak{T}'_d$, the composition $M_v (\X_t)$ $( M_v (\X_{s,t}))$ with $M_v$ uniquely defined by \eqref{unique_M} is again a sqgrp (sqgrm) which coincides (up to increments)  with the solution of
\begin{equation}\label{eq:abstract1}
\dot \Z_t = \Z_t \otimes  (M_v \dot{\X}_{t,t} )\, , \qquad \Z_0 = M_v \X_0\,.
\end{equation}
The same result applies to good sqgrms  when $v\subset \mathfrak{T}_d$. Under the same restriction on $v$, for any $N$-sbrm $\Y$ the path $t\to \mathcal{M}_u[\Y]_{0,t}$ is a $L$-sbrp coinciding  with the solution of
\begin{equation}\label{eq:abstract2}
\dot \W_t = \W_t\otimes_{L} (M_v \dot{\Y}_{t,t} ) \,, \qquad \W_0 = \1^*, 
\end{equation}
where $L\in \mathbb{N}$ is the smallest integer such that $|v_i|\leq N'$ for any $i =1, \cdots, d$.
\end{corollary}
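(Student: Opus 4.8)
The plan is to derive Corollary~\ref{last_cor} as an essentially immediate specialization of Theorem~\ref{thm:transl_H} to the Butcher--Connes--Kreimer Hopf algebra $\mathcal{H}=\mathcal{H}_{BCK}(\mathbb{R}^d)$, the only real content being the verification that the pre-Lie--defined map $M_v$ of \eqref{unique_M} is indeed a legitimate translation map over $\mathcal{D}=\langle \bullet_1^*,\dots,\bullet_d^*\rangle$ in the sense of Definition~\ref{def:Mv}, and that it satisfies the extra hypothesis on the grading needed for the second half of Theorem~\ref{thm:transl_H}. So the first step is: recall that $P(\mathcal{H}^*_{BCK}(\mathbb{R}^d))=\langle\mathfrak{T}_d\rangle$ carries the free pre-Lie structure $\curvearrowright$ with $[\cdot,\cdot]_\curvearrowright=[\cdot,\cdot]_\star$, so that a pre-Lie endomorphism is automatically a Lie-algebra endomorphism. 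Hence $M_v$ defined on generators by \eqref{unique_M} extends (uniquely, by freeness of the pre-Lie algebra) to a pre-Lie endomorphism of $\langle\mathfrak{T}_d\rangle$, which is therefore a Lie-algebra endomorphism of $\mathfrak{g}(\mathcal{H})\cap\mathcal{H}^*=P(\mathcal{H}^*)$ with $M_v\bullet_i^*=\bullet_i^*+v_i$; by the density remark following Definition~\ref{def:Mv} (i.e. \cite[Rrk.~3.11]{Bogfjellmo2018}) it extends continuously to all of $\mathfrak{g}(\mathcal{H})$, so $M_v$ is a translation map over $\mathcal{D}$.

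Second, I would check the grading hypothesis of the truncated part of Theorem~\ref{thm:transl_H}: using that $\curvearrowright$ is graded by the number of nodes $|\cdot|$ and that $M_v\bullet_i^*$ lies in degrees $\le N'$ (with $N'$ the largest $|v_i|$), an induction on the number of nodes shows $M_v$ sends $\mathcal{H}^{*N}_{BCK}$ into $\mathcal{H}^{*L}_{BCK}$ with $L=N\cdot N'$; this is exactly the statement already asserted in the paragraph preceding the corollary, so I would simply invoke it. With these two facts in hand, the conclusions of Corollary~\ref{last_cor} are precisely the conclusions of Theorem~\ref{thm:transl_H} read off for this $\mathcal{H}$: the composition $M_v(\X_t)$ (resp. $M_v(\X_{s,t})$) is again a smooth branched rough path (model), it solves the transport equation \eqref{eq:translODE} which here reads \eqref{eq:abstract1} (writing $\otimes$ for $\star$, following the Example identifying $\star$ with the Grossman--Larson/concatenation-type product), the good-model statement follows from the $v\subset P(\mathcal{H}^*)$ case of the theorem, and $\mathcal{M}_v[\Y]$ with its explicit form $M_v^L(\Y^L_{s,t})=\pi_L M_v(\X_{s,t})$ and defining ODE \eqref{eq:abstract2} is the specialization of \eqref{eq:not_local_transl_H}. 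I would also note for consistency that this $M_v$ coincides with the renormalization operator of \cite{Bruned2019}, since both are the unique pre-Lie endomorphisms fixing the translation data on the generators.

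The only genuine subtlety—hence the step I would flag as the main obstacle—is making the extension of $M_v$ from the pre-Lie/Lie generators to the full algebra $\mathcal{H}'$ rigorous and checking it is a $\star$-morphism and not merely a Lie morphism: this goes through the Milnor--Moore theorem exactly as in the discussion preceding Theorem~\ref{thm:transl_H} (a Lie endomorphism of $P(\mathcal{H}^*)$ lifts to an algebra endomorphism of $\mathcal{U}(P(\mathcal{H}^*))\cong\mathcal{H}^*$ and then, by density, to $\mathcal{H}'$), but one should be slightly careful that the pre-Lie extension and the universal-enveloping extension agree—they do, because both restrict to $M_v$ on $P(\mathcal{H}^*)$ and an algebra morphism out of $\mathcal{U}(P(\mathcal{H}^*))$ is determined by its restriction to the Lie generators. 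Beyond this bookkeeping everything is a direct transcription, so the proof itself can be kept to a couple of lines: ``Apply Theorem~\ref{thm:transl_H} with $\mathcal{H}=\mathcal{H}_{BCK}(\mathbb{R}^d)$; the map $M_v$ of \eqref{unique_M} is a translation map over $\mathcal{D}$ by freeness of the pre-Lie algebra $\langle\mathfrak{T}_d\rangle$ and satisfies the grading bound $L=N\cdot N'$, whence \eqref{eq:abstract1} and \eqref{eq:abstract2}.''
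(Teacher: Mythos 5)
Your proposal is correct and follows essentially the same route as the paper: the corollary is obtained exactly as a specialization of Theorem~\ref{thm:transl_H} to $\mathcal{H}=\mathcal{H}_{BCK}(\mathbb{R}^d)$, with the freeness of the pre-Lie structure on $\langle\mathfrak{T}_d\rangle$ (and $[\cdot,\cdot]_{\curvearrowright}=[\cdot,\cdot]_{\star}$) guaranteeing that $M_v$ of \eqref{unique_M} is a translation map, the Milnor--Moore/density argument providing the extension to a $\star$-morphism of $\mathcal{H}'$, and the grading bound $L=N\cdot N'$ supplying the hypothesis for the truncated statement. The paper itself gives no separate proof beyond this deduction, so your write-up simply makes explicit what the paper leaves implicit.
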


\begin{remark}
Conditions \eqref{unique_M} identify uniquely a full translation map $M_v$ whose explicit calculation on forest is not direct. In \cite{Bruned2019} the authors obtained a dual description of the dual  map $M^*_v\colon \mathcal{H}_{BCK}(\mathbb{R}^d)\to \mathcal{H}_{BCK}(\mathbb{R}^d)$ using coalgebraic tools related to extraction and contraction of trees but an explicit expression of $M_v$ is still unknown. Looking at equations \eqref{eq:abstract1}, \eqref{eq:abstract2} we realize that in case of smooth branched rough paths it is sufficient to compute $M_v$ only on trees,  which should slightly simplify the computations. Moreover, when  $M_v$ coincides with the addition in \eqref{eq:srplusv0}, then we obtain an explicit equation which does not involve any coalgebraic tool.
\end{remark}
\begin{remark}Concerning the question of defining translation maps for more general Hopf algebras $\cH$, the example of the free pre-Lie algebra points towards the general approach of using some 'free' algebraic structure on the Lie algebra $P(\mathcal{H}^*)$, if it is available, to define the translation map as a universal homomorphism of the free object. Ideas to this regard were proposed in \cite[Defn.5.2.1]{Preiss21} and \cite[Defn.8]{Rahm21}. In the latter, this basic idea is also applied to the case of the free post-Lie algebra taking the role of $P(\mathcal{H}^*)$.
\end{remark}

\subsection{Canonical sum and minimal coupling of smooth rough paths}
\label{sec:srplus}
We pass now to the notion of sum in the generalised context of s$\mathcal{H}$rms, extending Definition \ref{def:sum_and_scalar_multiplication} to a generic Hopf algebra as before.
\begin{definition}\label{def:sum_and_scalar_multiplication_hopf} For any fixed s$\mathcal{H}$rms $\X,\Y:[0,T]^2\to\mathcal{H}'$ let $t\mapsto\Z_t \in \mathcal{H}'$ be the Cartan development of $\dot\X_{s,s}+\dot\Y_{s,s}$, i.e. the unique solution to
$$
\dot\Z_t=  \Z_t\star \left(\dot\X_{t,t}+\dot\Y_{t,t}\right), \quad \Z_0 = \1^*.
$$
We then write $\Z := \X \srplus\Y$ for the associated sgrm and call it the \textbf{canonical sum} of $\X$ and $\Y$. For any $\lambda\in \mathbb{R}$ we define also the sgrm $\Z=\lambda\srscalar\X$ via the Cartan development of  $\lambda \dot\X_{s,s}$, we call it the \textbf{canonical scalar multiplication}.
\end{definition}

\begin{remark}\label{rem:srscalar}
This scalar multiplication yields a new  scaling device for  smooth rough paths which strongly differs from the well-known dilation $\delta_\lambda$, where the latter is   defined   for any $x\in\mathcal{H}_n$ by  $\langle\delta_\lambda\X_{s,t},x\rangle=\langle\X_{s,t},\lambda^n x\rangle=\lambda^n\langle\X_{s,t},x\rangle$. 
In contrast to the pointwise scaling  $\delta_\lambda$, this new scaling  is a dynamical way to scale a smooth rough path. However, we observe that $(0\srscalar\X)_{s,t}=\delta_0\X_{s,t}=\mathbf{1}^*$ and that  $\overleftarrow{\X}$ (the  backward\footnote{The backward path is given by $\overleftarrow{\X}_{s,t}=\X_{(T-s),(T-t)}$.} rough path), $\delta_{-1}\X$ and $(-1)\srscalar\X$ are pairwise distinct. We expect this to   have interesting applications to the theory of signatures of rough paths in the geometric setting.
\end{remark}
From the vector space structure of the Lie algebra ${\mathfrak g}(\mathcal H)$, we easily deduce  that the    set of all smooth $\mathcal{H}$ rough models forms itself a vector space when equipped  with the sum $\srplus$ and the scalar multiplication $\srscalar$, with the set of all good smooth $\mathcal{H}$ rough models forming a subspace. In particular, for any real numbers $\lambda_1, \lambda_2, \lambda$ we have
\begin{equation*}
\X\srplus\Y=\Y\srplus\X,\quad\lambda\srscalar(\X\srplus\Y)=(\lambda\srscalar\X)\srplus(\lambda\srscalar\Y),\quad (\lambda_1+\lambda_2)\srscalar\X=(\lambda_1\srscalar\X)\srplus(\lambda_2\srscalar\X).
\end{equation*}
This is very much in contrast to the spaces of $\gamma$-Hölder or bounded $p$-variation rough paths for $\gamma\leq 1/2$ and $p\geq 2$, where it is basic folklore by now that such a vector space structure does not exist in any meaningful sense.

It is then possible to characterise the sum $\srplus$ via the group operation $\star$ up to some small remainder. The following theorem is an extension  in a Hopf algebra and smooth case of \cite[Section~3.3.1~B]{lyons1998}, which was stated in the context of geometric $p$-variation rough paths.
\begin{proposition}\label{prop:minimal_sum}
For any fixed couple of s$\mathcal{H}$rms $\X,\Y$ a map $\Z\colon [0,T]^2\to G(\mathcal{H})$ coincides with $\X \srplus \Y$ if and only if $\Z$ satisfies $\Z_{s,t}=\Z_{s,u}\star \Z_{u,t}$ for $s,u,t\in[0,T]$ and one has for any $s\in [0,T]$
 \begin{equation}\label{eq:prop:sum2H}
\Z_{s,t}=\X_{s,t}\star \Y_{s,t}+R_{s,t}\,,
 \end{equation}
 for some $R_{s,t}\in \mathcal{H}'$ such that for all $x\in  \mathcal{H}$ one has  $\langle R_{s,t},x\rangle= o(|t-s|)$ as  $t\to s$. Moreover, we have the relations
 \begin{equation}\label{eq:prop:sumH}
 \X_{s,t}\star\Y_{s,t}
 =\Y_{s,t}\star\X_{s,t}+r_{s,t}
 =\X_{s,t}+\Y_{s,t}-\mathbf{1}^*+r'_{s,t}\,,
 \end{equation}
 for some $r_{s,t}$, $r'_{s,t} \in \mathcal{H}'$ such that for all $x\in \mathcal{H}$ one has $\langle r_{s,t},x\rangle,\langle r'_{s,t},x\rangle= o(|t-s|)$ as $t\to s$.
\end{proposition}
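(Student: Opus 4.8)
The strategy is to exploit the characterization of $\X \srplus \Y$ through its diagonal derivative, since by Definition~\ref{def:sum_and_scalar_multiplication_hopf} a s$\mathcal{H}$rm is uniquely determined by its diagonal derivative via the Cartan development (solving the linear evolution equation \eqref{eq:evolutioneq}), exactly as in the proof of Theorem~\ref{fundthmSGRPH}. So the first half of the proof is: given a map $\Z\colon [0,T]^2 \to G(\mathcal{H})$ satisfying Chen's relation $\Z_{s,t} = \Z_{s,u}\star\Z_{u,t}$ and the expansion \eqref{eq:prop:sum2H}, I want to show $\dot\Z_{s,s} = \dot\X_{s,s} + \dot\Y_{s,s}$ for all $s$, which by uniqueness of the Cartan development forces $\Z = \X\srplus\Y$; conversely, if $\Z = \X\srplus\Y$, I must verify the expansion \eqref{eq:prop:sum2H} holds. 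The Chen relation is immediate for $\X\srplus\Y$ since it is built as increments of a path. For the diagonal derivative computation, pair \eqref{eq:prop:sum2H} with an arbitrary $x \in \mathcal{H}$, divide by $(t-s)$, and let $t \downarrow s$: the left side tends to $\langle \dot\Z_{s,s}, x\rangle$, the remainder term contributes $o(1)$ by hypothesis, and the product term $\langle \X_{s,t}\star\Y_{s,t}, x\rangle/(t-s)$ must be handled by expanding $\star$ through $\Delta x$, using that $\X_{s,s} = \Y_{s,s} = \mathbf{1}^*$ and that $\langle \X_{s,t} - \mathbf{1}^*, h\rangle, \langle \Y_{s,t} - \mathbf{1}^*, h\rangle$ are $O(|t-s|)$ smooth in $t$; this is exactly where the third identity in \eqref{eq:prop:sumH} gets used, reducing the product to $\X_{s,t} + \Y_{s,t} - \mathbf{1}^* + r'_{s,t}$, whose difference quotient tends to $\dot\X_{s,s} + \dot\Y_{s,s}$.

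So the real work is establishing the two auxiliary identities in \eqref{eq:prop:sumH}. For the first, $\X_{s,t}\star\Y_{s,t} = \Y_{s,t}\star\X_{s,t} + r_{s,t}$: both $\X_{s,t}$ and $\Y_{s,t}$ equal $\mathbf{1}^* + O(|t-s|)$ in each graded component, and the commutator $[\X_{s,t}, \Y_{s,t}]_\star$ therefore has each component of order $O(|t-s|^2)$ — write $\X_{s,t} = \mathbf{1}^* + A_{s,t}$, $\Y_{s,t} = \mathbf{1}^* + B_{s,t}$ with $A, B$ each $O(|t-s|)$ componentwise and smooth, then $\X_{s,t}\star\Y_{s,t} - \Y_{s,t}\star\X_{s,t} = A_{s,t}\star B_{s,t} - B_{s,t}\star A_{s,t}$, which is $O(|t-s|^2) = o(|t-s|)$ in each component (local finiteness of the grading makes "componentwise" suffice when paired with a fixed $x$). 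For the second, $\X_{s,t}\star\Y_{s,t} = \X_{s,t} + \Y_{s,t} - \mathbf{1}^* + r'_{s,t}$: again using $\X_{s,t} = \mathbf{1}^* + A_{s,t}$, $\Y_{s,t} = \mathbf{1}^* + B_{s,t}$, expand $(\mathbf{1}^* + A_{s,t})\star(\mathbf{1}^* + B_{s,t}) = \mathbf{1}^* + A_{s,t} + B_{s,t} + A_{s,t}\star B_{s,t}$, and the cross term $A_{s,t}\star B_{s,t}$ is $O(|t-s|^2)$ componentwise, hence $o(|t-s|)$ against any fixed $x$.

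The main obstacle, and the point requiring care, is the uniformity/topology bookkeeping: "$\langle R_{s,t}, x\rangle = o(|t-s|)$ for all $x \in \mathcal{H}$" is a statement in the full dual $\mathcal{H}'$ with its weak topology, and one must check that the $o$ and $O$ estimates obtained degree-by-degree assemble correctly when tested against an arbitrary (finitely supported, since $\mathcal{H}$ is graded and each $x$ lies in some $\mathcal{H}^N$) element $x$ — this is where connectedness and local finiteness of $\mathcal{H}$ do the job, since pairing with $x \in \mathcal{H}^N$ only sees finitely many graded components of the relevant elements, on each of which smoothness of $(s,t)\mapsto\langle\X_{s,t},\cdot\rangle$, $\langle\Y_{s,t},\cdot\rangle$ together with the vanishing on the diagonal yields the required Taylor estimates. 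A secondary point is verifying that $\Z = \X\srplus\Y$ indeed takes values in $G(\mathcal{H})$ and not merely $\mathcal{H}'$ — but this follows as in Theorem~\ref{thm:transl_H} from $\dot\Z_{s,s} = \dot\X_{s,s} + \dot\Y_{s,s} \in \mathfrak{g}(\mathcal{H})$, the Lie algebra being a vector space, together with $\exp_\star\colon\mathfrak{g}(\mathcal{H})\to G(\mathcal{H})$. Once \eqref{eq:prop:sumH} is in hand the rest is the difference-quotient argument sketched above.
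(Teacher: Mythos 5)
Your proposal is correct and follows essentially the same route as the paper: it characterizes $\Z=\X\srplus\Y$ through the diagonal derivative and uniqueness of the Cartan development, and it proves \eqref{eq:prop:sumH} by splitting off the counit part and bounding the coproduct cross terms $\langle\X_{s,t},y_i\rangle\langle\Y_{s,t},z_i\rangle=O(|t-s|^2)$ using smoothness and vanishing on the diagonal, with the grading ensuring only finitely many components matter for each fixed $x$. The one detail to spell out (as the paper does) is that Chen's relation together with weak continuity of left $\star$-multiplication upgrades the existence of the diagonal derivative of $\Z$ into differentiability of $t\mapsto\Z_{0,t}$ with derivative $\Z_{0,t}\star(\dot\X_{t,t}+\dot\Y_{t,t})$, so that the Cartan-development uniqueness you invoke genuinely applies to the a priori non-smooth map $\Z$.
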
 
 \begin{proof}
Let us start by formula \eqref{eq:prop:sum2H}. We fix $\Z:[0,T]^2\to G(\mathcal{H})$ any map with $\Z_{s,t}=\Z_{s,u}\star\Z_{u,t}$ and for all $x\in\mathcal{H}$ one has $\langle\Z_{s,t}-\X_{s,t}\star\Y_{s,t},x\rangle= o(|t-s|)$. Then, by  considering the path $t\to \Z_{0,t}$ and fixing $s\in[0,T]$, we use the continuity of the map $\mathcal{H}'\ni\mathbf{x}\mapsto \Z_{0,s}\star \mathbf{x}$, for the weak convergence  with respect to the duality pairing of $\mathcal{H}'$ with $\mathcal{H}$ to have the equalities
\begin{align*}
\lim_{t\to s}\frac{\Z_{0,t}-\Z_{0,s}}{t-s}&=\Z_{0s}\star\lim_{t\to s}\frac{\Z_{s,t}-\mathbf{1}^*}{t-s} =\Z_{0,s}\star\lim_{t\to s}\frac{\X_{s,t}\star\Y_{s,t}-\mathbf{1}^*}{t-s}\\&=\Z_{0,s}\star\partial_t|_{t=s}(\X_{s,t}\star\Y_{s,t})=\Z_{0s}\star(\dot\X_{s,s}+\dot\Y_{s,s})\,.
\end{align*}
Thus the path $s\mapsto \Z_{0,s}$ is differentiable with derivative $s\mapsto\Z_{0,s}\star(\dot\X_{s,s}+\dot\Y_{s,s})$ , implying $\dot\Z_{s,s}=\dot\X_{s,s}+\dot\Y_{s,s}$, i.e. $\Z=\X\srplus\Y$. On the other hand, assuming that $\Z$ is given by $\X\srplus\Y$,  for all $x\in\mathcal{H}$ there are remainders $\theta^x_{s,t}= o(|t-s|)$, ${\theta}^{\prime x}_{s,t}= o(|t-s|)$ such that
\begin{equation}\label{sum_1}
\begin{split}
 \langle\Z_{s,t},x\rangle&=\langle\dot\Z_{s,s},x\rangle (t-s)+\theta^x_{s,t}\\&=\langle\dot\X_{s,s}+\dot\Y_{s,s},x\rangle (t-s)+\theta^x_{s,t}\\&=\langle\X_{s,t}+\Y_{s,t}- \mathbf{1}^*,x\rangle+{\theta}^{\prime x}_{s,t}\,.
 \end{split}
\end{equation}
Therefore the equivalence  \eqref{eq:prop:sum2H} will follow from  equivalence \eqref{eq:prop:sumH}. Recall that for   $\X_{s,t},\Y_{s,t}$ in $G(\mathcal{H})$ and $x$ in $ {\mathcal H}$, we have $\langle\X_{s,t}\star\Y_{s,t},x\rangle  =\langle \X_{s,t}\mathbin{\hat{\otimes}}\Y_{s,t},\Delta x\rangle$, where $\hat{\otimes}$ is the completed external tensor product, and furthermore $\langle \X_{s,t}, \mathbf 1\rangle= \langle \Y_{s,t}, \mathbf 1\rangle=1$.  Hence, 
\begin{equation}\label{sum_2}
    \langle\X_{s,t}\star\Y_{s,t},\mathbf{1}\rangle
    =\langle\Y_{s,t}\star\X_{s,t},\mathbf{1}\rangle
    =\langle\Y_{s,t}+\X_{s,t}-\mathbf{1}^*,\mathbf{1}\rangle
    =1.
\end{equation}
Furthermore, for every $x\in\mathcal{H}^{\geq 1}=\sum_{n=1}^\infty\mathcal{H}_n$, i.e.\ $\langle\mathbf{1}^*,x\rangle=0$, we write $\Delta x=\mathbf{1}\otimes x+x\otimes\mathbf{1}+\sum_{i=1}^n y_i\otimes z_i$ for some $n$ and $y_i,z_i\in\mathcal{H}^{\geq 1}$, thereby obtaining 
\begin{align*}
    \langle\X_{s,t}\star\Y_{s,t},x\rangle
    &=\langle \X_{s,t}\mathbin{\hat{\otimes}}\Y_{s,t},\Delta x\rangle
    =\langle\X_{s,t},x\rangle+\langle\Y_{s,t},x\rangle+\sum_{i=1}^n\langle\X_{s,t},y_i\rangle\langle\Y_{s,t},z_i\rangle\\
    &=\langle\X_{s,t}+\Y_{s,t}-\mathbf{1}^*,x\rangle
    +\underbrace{\sum_{i=1}^n\langle\X_{s,t},y_i\rangle\langle\Y_{s,t},z_i\rangle}_{o(|t-s|)}\, .
\end{align*}
The fact that this last  term is of order $o(|t-s|)$ follows from the  smoothness of $\X$ and $\Y$ and $\langle\X_{t,t},y_i\rangle=\langle\Y_{t,t},z_i\rangle=0$, which  for any index $i$, actually  yields the existence of a constant   $C_i>0$ such that $|\langle\X_{s,t},y_i\rangle\langle\Y_{s,t},z_i\rangle|\leq C_i(t-s)^2$ for all $t,s\in[0,T]$. 
Similarly, one shows that $\langle\Y_{s,t}\star\X_{s,t}-\X_{s,t}-\Y_{s,t}+\mathbf{1}^*,x\rangle= o(|t-s|)$.
\end{proof}

\begin{remark}
As pointed out in Remark \ref{rem:srplus_honest_geom_rp} with reference to \cite[Section~3.3.1~B]{lyons1998} for smooth geometric rough paths, we strongly conjecture that one can add the minimal extension of a $N$-s$\mathcal{H}$rm $\X$ to any general $\gamma$-Hölder $\mathcal{H}$ rough path $\mathbf{W}$ for any $\gamma\in(0,1)$, via the requirement $\langle(\X\srplus\mathbf{W})_{s,t},x\rangle=\langle\X_{s,t}\star\mathbf{W}_{s,t},x\rangle+o(|t-s|)$ using the sewing lemma.
\end{remark}

The canonical sum can now be used to define a \textbf{minimal coupling} of smooth rough paths in the situation where we have connected graded commutative Hopf algebras $(\mathfrak{H}^i)_{i=1,\dots,m}$ of the form
\[\mathfrak{H}^i= \bigoplus_{j=0}^\infty\mathfrak{H}_j^i\]
together with embeddings $\iota_i:(\mathfrak{H}^i)^*\to(\mathcal{H})^*$ which are injective graded Hopf algebra homorphisms, such that the sum of the non-unital parts (the kernels of the counit of $\mathcal{H}^*$) of the images $\hat{\mathfrak{H}}^i:=\iota_i(\mathfrak{H}^i)^*$ and $\hat{\mathfrak{H}}^i_j:=\iota_i(\mathfrak{H}^i)^*_j$ inside $\mathcal{H}^*$ forms a direct product, i.e.
\begin{equation*}
\sum_{i=1}^m \hat{\mathfrak{H}}^{i,\geq 1}=\bigoplus_{i=1}^m\hat{\mathfrak{H}}^{i,\geq 1}\, ,
\end{equation*}
where $\hat{\mathfrak{H}}^{i,\geq 1}
:=\bigoplus_{j=1}^\infty\hat{\mathfrak{H}}_j^i=\{x\in\hat{\mathfrak{H}}^i\colon \langle x,\mathbf{1}\rangle=0\}$,
and such that the first level of $\mathcal{H}^*=\oplus_{n=0}^\infty \mathcal{H}^*_n$ is actually spanned by the first levels of the $\hat{\mathfrak{H}}^i$, i.e.
\begin{equation*}
    \mathcal{H}^*_1=\bigoplus_{i=1}^m\hat{\mathfrak{H}}^i_1.
\end{equation*}
Any s$\mathfrak{H}^i$rp  $\X^i:[0,T]\to G(\mathfrak{H}^i)$ is trivially mapped via the embedding $\iota_i$ and  extended by  continuity inside $\mathcal{H}'$ to a s$\mathcal{H}$rp $\iota_i \X^i:[0,T]\to G(\mathcal{H})$. We define then the minimal coupling of the $\X^i$ as 
\begin{equation*}
(\X^1,\dots,\X^m)_{\text{min}}:=\iota_1 \X^1\srplus\cdots\srplus \iota_m\X^m.
\end{equation*}

As a special case,  we  consider two connected graded commutative Hopf algebras $\mathfrak{H}^0$, $\mathfrak{H}^1$ with embedded graded dual spaces $\hat{\mathfrak{H}}^0=\iota_0(\mathfrak{H}^0)^*$,  $ \hat{\mathfrak{H}}^1=\iota_1(\mathfrak{H}^1)^*$ such that 
\[\hat{\mathfrak{H}}^{0,\geq 1}+\hat{\mathfrak{H}}^{1,\geq 1}=\hat{\mathfrak{H}}^{0,\geq 1}\oplus\hat{\mathfrak{H}}^{1,\geq 1}\,, \quad  \mathcal{H}_1^*=\hat{\mathfrak{H}}^0_1\oplus\hat{\mathfrak{H}}^1_1\,.\]  
By taking $\hat{\mathfrak{H}}^0$ the unital subalgebra generated by just a single element $x_0\in(\mathcal{H})^*_1$ (which is automatically a graded sub Hopf algebra of $\mathcal{H}^*$), the only smooth rough models $\X^0:[0,T]\to G(\mathfrak{H}^0)$ are of the form
\begin{equation*}
\iota_0\dot\X^0_{s,s}=\psi(s)x_0
\end{equation*}
for some smooth $\psi\colon [0,T]\to\mathbb{R}$. Then one easily checks    that  
\begin{equation*}
  \iota_0\X^0_{s,t}=\exp_{\star}\left(\left(\int_s^t\psi(u)du\right) x_0\right)\,.
\end{equation*}
By taking $\psi=1$,  we find that for any s$\mathfrak{H}^1$rp  $\X^1:[0,T]\to G(\mathfrak{H}^1)$ the minimal coupling ${\bar{\X}}:=(\X^0,\X^1)_{\text{min}}$ is uniquely determined by
\begin{equation*}
\dot{\bar{\X}}_{s,s}=x_0+\dot\X^1_{s,s}.
\end{equation*}
In this situation, we can directly reinterpret $\bar{\X}$ as a specific type of full translation.
\begin{corollary}\label{cor_time_transl_hopf}
Let  $\X^1$ be a good s$\mathcal{H}$rm, $v_0\in \mathfrak{g}(\mathcal{H})$ and $M_{v_0}\colon \mathcal{H}^*\to\mathcal{H}^*$ a full translation map such that  $M_{v_0}x_0=x_0+v_0$ and $M_{v_0}$ restricted to $\iota_1(\mathfrak{H}^1)^*$ is the identity. Then the translation $t\mapsto M_{v_0}\bar{\X}_{0,t}$ can be described as the sum $\mathbf{V}_0\srplus \bar{\X}$, where $\mathbf{V}_0$ is given by
\[(\mathbf{V}_0)_{s,t}=\exp_{\star}(v_0(t-s))\,.\]
\end{corollary}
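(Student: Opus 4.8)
The plan is to follow the template of Corollary \ref{cor_time_transl}, replacing $\otimes$ by $\star$ and invoking the abstract tools of this section. The starting point is the identity established in the discussion preceding the statement: identifying $\X^1$ with its image $\iota_1\X^1$ inside $\mathcal{H}'$, the minimal coupling $\bar{\X}=(\X^0,\X^1)_{\mathrm{min}}$ is the unique s$\mathcal{H}$rm with diagonal derivative
$$\dot{\bar{\X}}_{s,s}=x_0+\dot\X^1_{s,s},$$
where $\dot\X^1_{s,s}$ lies in $\iota_1(\mathfrak{H}^1)^*\cap\mathfrak{g}(\mathcal{H})$ (here one uses that $\X^1$, being a good s$\mathcal{H}$rm, has its diagonal derivative in the graded dual, so that the hypothesis on $M_{v_0}$ applies to it). Recall also that a s$\mathcal{H}$rm is determined by its diagonal derivative together with the initial point, since for fixed $s$ the path $t\mapsto\X_{s,t}$ solves $\dot\gamma(t)=\gamma(t)\star\dot\X_{t,t}$, $\gamma(s)=\1^*$, which by the regularity of $G(\mathcal{H})$ has a unique solution, exactly as in the proof of Theorem \ref{fundthmSGRPH}.

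First I would apply Theorem \ref{thm:transl_H} to $\bar{\X}$ and the full translation map $M_{v_0}$: the composition $t\mapsto M_{v_0}\bar{\X}_{0,t}$ is again a s$\mathcal{H}$rm, with diagonal derivative $M_{v_0}\dot{\bar{\X}}_{s,s}$ and with initial value $M_{v_0}\1^*=\1^*$ (a $\star$-morphism is unital). Since $M_{v_0}$ is $\mathbb{R}$-linear, the hypotheses $M_{v_0}x_0=x_0+v_0$ and $M_{v_0}|_{\iota_1(\mathfrak{H}^1)^*}=\mathrm{id}$ yield
$$M_{v_0}\dot{\bar{\X}}_{s,s}=M_{v_0}x_0+M_{v_0}\dot\X^1_{s,s}=(x_0+v_0)+\dot\X^1_{s,s}=v_0+\dot{\bar{\X}}_{s,s}.$$

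Next I would verify that $\mathbf{V}_0$, given by $(\mathbf{V}_0)_{s,t}=\exp_\star(v_0(t-s))$, is a genuine s$\mathcal{H}$rm: it takes values in $G(\mathcal{H})$ because $v_0\in\mathfrak{g}(\mathcal{H})$, Chen's relation holds because $v_0(u-s)$ and $v_0(t-u)$ commute, and smoothness is immediate; moreover $\dot{(\mathbf{V}_0)}_{s,s}=\partial_t|_{t=s}\exp_\star(v_0(t-s))=v_0$. By Definition \ref{def:sum_and_scalar_multiplication_hopf}, the canonical sum $\mathbf{V}_0\srplus\bar{\X}$ is then the s$\mathcal{H}$rm with initial value $\1^*$ and diagonal derivative $v_0+\dot{\bar{\X}}_{s,s}$, which coincides with the expression computed above. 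Hence $M_{v_0}\bar{\X}$ and $\mathbf{V}_0\srplus\bar{\X}$ have the same initial point and the same diagonal derivative, and therefore agree by the uniqueness recalled in the first paragraph; this proves the claim.

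I do not anticipate a serious obstacle: the argument is purely a transcription of the geometric case. The only points demanding care are the bookkeeping of the identifications $\mathfrak{H}^1\hookrightarrow\mathcal{H}$ via $\iota_1$ (so that $\dot\X^1_{s,s}$ really lands in the subspace on which $M_{v_0}$ is the identity), and the observation that $M_{v_0}$, being a $\star$-algebra morphism, is linear and unital, which is what makes the two displayed computations of the diagonal derivative match.
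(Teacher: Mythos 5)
Your argument is correct and is essentially the paper's own proof: the paper likewise computes $\partial_t|_{t=s}(M_{v_0}\bar{\X}_{s,t})=M_{v_0}\dot{\bar{\X}}_{s,s}=x_0+v_0+\dot\X^1_{s,s}=v_0+\dot{\bar{\X}}_{s,s}$ using linearity of $M_{v_0}$ and the two hypotheses, and concludes by the fact that the diagonal derivative (with initial point $\1^*$) determines the smooth rough model uniquely. Your additional checks — that Theorem \ref{thm:transl_H} makes $M_{v_0}\bar{\X}$ a s$\mathcal{H}$rm, that $\mathbf{V}_0$ is a s$\mathcal{H}$rm with diagonal derivative $v_0$, and that $\dot\X^1_{s,s}$ really lies in $\iota_1(\mathfrak{H}^1)^*$ — are just the details the paper leaves implicit in its one-line proof.
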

\begin{proof}
We obviously have
\begin{equation}\label{eq:srplusv0}
    \partial_t|_{t=s}(M_{v_0}{\bar{\X}}_{s,t})=M_{v_0}\dot{\bar{\X}}_{s,s}=x_0+v_0+\dot\X^1_{s,s}=v_0+\dot{\bar{\X}}_{s,s},
\end{equation}
which shows $M_{v_0}{\bar{\X}}=\mathbf{V}_0\srplus \bar{\X}$. 
\end{proof}

\begin{remark}
In this  general connected graded commutative Hopf algebra setting, we once again recover   a property similar to  the final statement of \cite[Thm. 30 (ii)]{Bruned2019}: the s$\mathcal{H}$rp  $M_{v_0}{\bar{\X}}_t$ does not depend on the precise choice of the Hopf algebra homomorphism $M_{v_0}$, as long as $M_{v_0} x_0=x_0+v_0$, $M_{v_0}|_{\hat{\mathfrak{H}}^1}=\id$.
\end{remark}

\bibliographystyle{alpha}
\bibliography{bibliography}

\begin{thebibliography}{CEMW14}

\bibitem[AFS19]{Amendola2019}
Carlos Am\'{e}ndola, Peter~K. Friz, and Bernd Sturmfels.
\newblock Varieties of signature tensors.
\newblock {\em Forum Math. Sigma}, 7:e10, 2019.

\bibitem[BCE20]{bruned2020renormalising}
Yvain Bruned, Charles Curry, and Kurusch {Ebrahimi-Fard}.
\newblock Quasi-shuffle algebras and renormalisation of rough differential
  equations.
\newblock {\em Bull. Lond. Math. Soc.}, 52(1):43--63, 2020.

\bibitem[BCFP19]{Bruned2019}
Yvain Bruned, Ilya Chevyrev, Peter~K. Friz, and Rosa Prei\ss.
\newblock A rough path perspective on renormalization.
\newblock {\em J. Funct. Anal.}, 277(11):108283, 60, 2019.

\bibitem[BDS16]{Bogfjellmo2016}
Geir Bogfjellmo, Rafael Dahmen, and Alexander Schmeding.
\newblock Character groups of {H}opf algebras as infinite-dimensional {L}ie
  groups.
\newblock {\em Ann. Inst. Fourier (Grenoble)}, 66(5):2101--2155, 2016.

\bibitem[BDS18]{Bogfjellmo2018}
Geir Bogfjellmo, Rafael Dahmen, and Alexander Schmeding.
\newblock Overview of (pro-){L}ie group structures on {H}opf algebra character
  groups.
\newblock In {\em Discrete mechanics, geometric integration and {L}ie-{B}utcher
  series}, volume 267 of {\em Springer Proc. Math. Stat.}, pages 287--314.
  Springer, Cham, 2018.

\bibitem[Bel20]{Bel2020a}
Carlo Bellingeri.
\newblock Quasi-geometric rough paths and rough change of variable formula.
\newblock {\em arXiv preprint arXiv:2009.00903}, page 46 pp., 2020.

\bibitem[BHZ19]{bruned2019algebraic}
Yvain Bruned, Martin Hairer, and Lorenzo Zambotti.
\newblock Algebraic renormalisation of regularity structures.
\newblock {\em Invent. Math.}, 215(3):1039--1156, 2019.

\bibitem[Bru20]{Bruned2020}
Yvain Bruned.
\newblock Renormalisation from non-geometric to geometric rough paths.
\newblock {\em arXiv e-prints 2007.14385}, page 19 pp., 2020.

\bibitem[BS18]{BogfjellmoSchmeding}
Geir Bogfjellmo and Alexander Schmeding.
\newblock The geometry of characters of {H}opf algebras.
\newblock In {\em Computation and combinatorics in dynamics, stochastics and
  control}, volume~13 of {\em Abel Symp.}, pages 159--185. Springer, Cham,
  2018.

\bibitem[Car72]{Cartier}
Pierre Cartier.
\newblock On the structure of free {B}axter algebras.
\newblock {\em Advances in Math.}, 9:253--265, 1972.

\bibitem[CDL15]{cass2015constrained}
Thomas Cass, Bruce~K. Driver, and Christian Litterer.
\newblock Constrained rough paths.
\newblock {\em Proc. Lond. Math. Soc. (3)}, 111(6):1471--1518, 2015.

\bibitem[CEMM20]{Manchon2018}
Charles Curry, Kurusch {Ebrahimi-Fard}, Dominique Manchon, and Hans~Z.
  {Munthe-Kaas}.
\newblock Planarly branched rough paths and rough differential equations on
  homogeneous spaces.
\newblock {\em J. Differential Equations}, 269(11):9740--9782, 2020.

\bibitem[CEMW14]{curry14}
Charles Curry, Kurusch {Ebrahimi-Fard}, Simon~J.A. Malham, and Anke Wiese.
\newblock Lévy processes and quasi-shuffle algebras.
\newblock {\em Stochastics}, 86(4):632--642, 2014.

\bibitem[CGMe20]{colmenarejo2020toric}
Laura Colmenarejo, Francesco Galuppi, and Mateusz Micha\l~ek.
\newblock Toric geometry of path signature varieties.
\newblock {\em Adv. in Appl. Math.}, 121:102102, 35, 2020.

\bibitem[CGPZ20]{Clavier2020}
Pierre Clavier, Li~Guo, Sylvie Paycha, and Bin Zhang.
\newblock Renormalisation and locality: branched zeta values.
\newblock In {\em Algebraic Combinatorics, Resurgence, Moulds and Applications
  ({CARMA})}, pages 85--132. EMS Publishing House, 2020.

\bibitem[Che54]{Chen54}
Kuo-Tsai Chen.
\newblock Iterated integrals and exponential homomorphisms.
\newblock {\em Proc. London Math. Soc. (3)}, 4:502--512, 1954.

\bibitem[CK16]{chevyrev2016primer}
Ilya Chevyrev and Andrey Kormilitzin.
\newblock A primer on the signature method in machine learning.
\newblock {\em arXiv preprint arXiv:1603.03788}, page 46 pp., 2016.

\bibitem[CL01]{ChapLiv}
Fr\'{e}d\'{e}ric Chapoton and Muriel Livernet.
\newblock Pre-{L}ie algebras and the rooted trees operad.
\newblock {\em Internat. Math. Res. Notices}, 8:395--408, 2001.

\bibitem[CLT19]{celledoni2019signatures}
Elena Celledoni, P\aa l~Erik Lystad, and Nikolas Tapia.
\newblock Signatures in shape analysis: an efficient approach to motion
  identification.
\newblock In {\em Geometric science of information}, volume 11712 of {\em
  Lecture Notes in Comput. Sci.}, pages 21--30. Springer, Cham, 2019.

\bibitem[CQRV02]{CQRV}
Chryssomalis Chryssomalakos, Hernando Quevedo, Marcos Rosenbaum, and José~D.
  Vergara.
\newblock Normal coordinates and primitive elements in the {H}opf algebra of
  renormalization.
\newblock {\em Comm. Math. Phys.}, 225(3):465--485, 2002.

\bibitem[Dav08]{Davie08}
Alexander~M. Davie.
\newblock Differential equations driven by rough paths: an approach via
  discrete approximation.
\newblock {\em Appl. Math. Res.}, page 40 pp., 2008.
\newblock [Issue information previously given as no. 2 (2007)].

\bibitem[DET20]{Tapia20}
Joscha Diehl, Kurusch {Ebrahimi-Fard}, and Nikolas Tapia.
\newblock Time-warping invariants of multidimensional time series.
\newblock {\em Acta Appl. Math.}, 170:265--290, 2020.

\bibitem[EMPW15]{kurusch15}
Kurusch {Ebrahimi-Fard}, Simon J.~A. Malham, Frederic Patras, and Anke Wiese.
\newblock The exponential {L}ie series for continuous semimartingales.
\newblock {\em Proc. A.}, 471(2184):20150429, 19, 2015.

\bibitem[FG16]{friz2016geometric}
Peter~K. Friz and Paul Gassiat.
\newblock Geometric foundations of rough paths.
\newblock In {\em Geometry, Analysis and Dynamics on sub-Riemannian Manifolds},
  pages 171--210. European Mathematical Society Publishing House, 2016.

\bibitem[FH20]{Friz2020course}
Peter~K. Friz and Martin Hairer.
\newblock {\em A Course on Rough Paths, with introduction to Regularity
  Structures (2nd extended edition)}.
\newblock Springer International Publishing, 2020.

\bibitem[FP20]{FoissyPatras}
Lo\"{\i}c Foissy and Fr\'{e}d\'{e}ric Patras.
\newblock Lie theory for quasi-shuffle bialgebras.
\newblock In {\em Periods in quantum field theory and arithmetic}, volume 314
  of {\em Springer Proc. Math. Stat.}, pages 483--540. Springer, Cham, 2020.

\bibitem[FS17]{friz2017general}
Peter~K. Friz and Atul Shekhar.
\newblock General rough integration, {L}\'{e}vy rough paths and a
  {L}\'{e}vy-{K}intchine-type formula.
\newblock {\em Ann. Probab.}, 45(4):2707--2765, 2017.

\bibitem[FV10]{frizbook}
Peter~K. Friz and Nicolas Victoir.
\newblock {\em Multidimensional stochastic processes as rough paths}, volume
  120 of {\em Cambridge Studies in Advanced Mathematics}.
\newblock Cambridge University Press, Cambridge, 2010.
\newblock Theory and applications.

\bibitem[Gal19]{galuppi2019rough}
Francesco Galuppi.
\newblock The rough {V}eronese variety.
\newblock {\em Linear Algebra Appl.}, 583:282--299, 2019.

\bibitem[GL89]{grossman_larson89}
Robert Grossman and Richard~G. Larson.
\newblock Hopf-algebraic structure of families of trees.
\newblock {\em J. Algebra}, 126(1):184--210, 1989.

\bibitem[Gub10]{gub10}
Massimiliano Gubinelli.
\newblock Ramification of rough paths.
\newblock {\em J. Differential Equations}, 248(4):693--721, 2010.

\bibitem[Hai13]{hairer2013}
Martin Hairer.
\newblock Solving the {KPZ} equation.
\newblock {\em Ann. of Math. (2)}, 178(2):559--664, 2013.

\bibitem[Hai14]{Hairer2014}
Martin Hairer.
\newblock A theory of regularity structures.
\newblock {\em Invent. Math.}, 198(2):269--504, 2014.

\bibitem[HK15]{hairer2015geometric}
Martin Hairer and David Kelly.
\newblock Geometric versus non-geometric rough paths.
\newblock {\em Ann. Inst. Henri Poincar\'{e} Probab. Stat.}, 51(1):207--251,
  2015.

\bibitem[Hof00]{hoffman2000}
Michael~E. Hoffman.
\newblock Quasi-shuffle products.
\newblock {\em J. Algebraic Combin.}, 11(1):49--68, 2000.

\bibitem[IN99]{iserles1999solution}
Arieh Iserles and Syvert~P. N{\o}rsett.
\newblock On the solution of linear differential equations in {L}ie groups.
\newblock {\em R. Soc. Lond. Philos. Trans. Ser. A Math. Phys. Eng. Sci.},
  357(1754):983--1019, 1999.

\bibitem[Kel12]{kelly2012ito}
David Kelly.
\newblock {\em Itô corrections in stochastic equations}.
\newblock PhD thesis, University of Warwick, 2012.

\bibitem[Kre00]{Kreimer}
Dirk Kreimer.
\newblock Shuffling quantum field theory.
\newblock {\em Lett. Math. Phys.}, 51(3):179--191, 2000.

\bibitem[LCL07]{lyons2007differential}
Terry~J. Lyons, Michael Caruana, and Thierry L{\'e}vy.
\newblock {\em Differential equations driven by rough paths}.
\newblock Springer, 2007.

\bibitem[LOT21]{linares2021structure}
Pablo Linares, Felix Otto, and Marcus Tempelmayr.
\newblock The structure group for quasi-linear equations via universal
  enveloping algebras.
\newblock {\em arXiv preprint arXiv2103.04187}, 2021.

\bibitem[Lyo98]{lyons1998}
Terry~J. Lyons.
\newblock Differential equations driven by rough signals.
\newblock {\em Rev. Mat. Iberoamericana}, 14(2):215--310, 1998.

\bibitem[Man11]{Manch}
Dominique Manchon.
\newblock A short survey on pre-{L}ie algebras.
\newblock In {\em Noncommutative geometry and physics: renormalisation,
  motives, index theory}, ESI Lect. Math. Phys., pages 89--102. Eur. Math.
  Soc., Z\"{u}rich, 2011.

\bibitem[Mil84]{Milnor}
John~W. Milnor.
\newblock Remarks on infinite-dimensional {L}ie groups.
\newblock In {\em Relativity, groups and topology, {II} ({L}es {H}ouches,
  1983)}, pages 1007--1057. North-Holland, Amsterdam, 1984.

\bibitem[MM65]{MilnorMoore}
John~W. Milnor and John~C. Moore.
\newblock On the structure of {H}opf algebras.
\newblock {\em Ann. of Math. (2)}, 81:211--264, 1965.

\bibitem[MP10]{manchon2010nested}
Dominique Manchon and Sylvie Paycha.
\newblock Nested sums of symbols and renormalized multiple zeta values.
\newblock {\em Int. Math. Res. Not. IMRN}, 24:4628--4697, 2010.

\bibitem[MP18]{MP}
Fr\'{e}d\'{e}ric Menous and Fr\'{e}d\'{e}ric Patras.
\newblock Renormalization: a quasi-shuffle approach.
\newblock In {\em Computation and combinatorics in dynamics, stochastics and
  control}, volume~13 of {\em Abel Symp.}, pages 599--628. Springer, Cham,
  2018.

\bibitem[Pre16]{Preiss16}
Rosa Prei{\ss}.
\newblock {From Hopf algebras to rough paths and regularity structures}.
\newblock Master's thesis, Technische Universit{\"a}t Berlin, 2016.
\newblock \url{http://page.math.tu-berlin.de/~preiss/files/masters.pdf}.

\bibitem[Pre21]{Preiss21}
Rosa Preiß.
\newblock Hopf algebras and non-associative algebras in the study of
  iterated-integral signatures and rough paths.
\newblock PhD thesis manuscript, submitted to {T}echnische {U}niversität
  {B}erlin, 2021.
\newblock
  \url{http://page.math.tu-berlin.de/~preiss/files/submittedDissertation.pdf}.

\bibitem[PSS19]{pfeffer2019learning}
Max Pfeffer, Anna Seigal, and Bernd Sturmfels.
\newblock Learning paths from signature tensors.
\newblock {\em SIAM J. Matrix Anal. Appl.}, 40(2):394--416, 2019.

\bibitem[Rah21]{Rahm21}
Ludwig Rahm.
\newblock Translations of rough paths in combinatorial hopf algebras.
\newblock {\em arxiv preprint arXiv:2111.02876}, 2021.

\bibitem[Reu93]{reutenauer1993free}
Christophe Reutenauer.
\newblock {\em Free Lie Algebras}.
\newblock LMS monographs. Clarendon Press, 1993.

\bibitem[TZ20]{Tapia2018}
Nikolas Tapia and Lorenzo Zambotti.
\newblock The geometry of the space of branched rough paths.
\newblock {\em Proc. Lond. Math. Soc. (3)}, 121(2):220--251, 2020.

\end{thebibliography}

\end{document}